\definecolor{darkblue}{rgb}{0.0, 0.0, 0.55}
\renewcommand{\qedsymbol}{\rule[.12ex]{1.2ex}{1.2ex}}
\def\ds{\displaystyle}
\DeclareMathOperator{\symm}{Sym}
\DeclareMathOperator{\Span}{span}
\DeclareMathOperator{\re}{re}
\DeclareMathOperator{\im}{im}
\DeclareMathOperator{\Pos}{Pos}
\DeclareMathOperator{\Sq}{Sos}
\DeclareMathOperator{\Lf}{Lf}
\DeclareMathOperator{\Vol}{Vol}
\DeclareMathOperator{\conv}{conv}
\DeclareMathOperator{\codim}{codim}
\DeclareMathOperator{\SO}{SO}
\DeclareMathOperator{\SU}{SU}
\DeclareMathOperator{\U}{U}
\DeclareMathOperator{\CP}{CP}
\DeclareMathOperator{\pr}{pr}
\DeclareMathOperator{\sq}{sq}
\DeclareMathOperator{\gr}{gr}
\DeclareMathOperator{\Tr}{tr}
\numberwithin{equation}{section}
\newcommand{\x}{{\tt x}}
\newcommand{\z}{{\tt z}}
\newcommand{\w}{{\tt w}}
\newcommand{\Sym}{\mathbb{S}}
\newcommand{\RR}{\mathbb R}
\newcommand{\FF}{\mathbb F}
\newcommand{\HH}{\mathbb H}
\newcommand{\NN}{\mathbb N}
\newcommand{\CC}{\mathbb C}
\newcommand{\QQ}{\mathbb Q}
\newcommand{\y}{{\tt y}}
\newcommand{\e}{{\tt e}}
\newcommand{\f}{{\tt f}}
\newcommand{\cC}{\mathcal C}
\newcommand{\cU}{\mathcal U}
\newcommand{\cH}{\mathcal H}
\newcommand{\cK}{\mathcal K}
\newcommand{\cL}{\mathcal L}
\newcommand{\PP}{\mathbb P}
\newcommand{\dd}{{\rm d}}
\newcommand{\cS}{\mathcal S}
\newcommand{\cT}{\mathcal T}
\newcommand{\cM}{\mathcal M}
\newtheorem{theorem}{Theorem}[section]
\newtheorem{corollary}[theorem]{Corollary}
\newtheorem{lemma}[theorem]{Lemma}
\newtheorem{proposition}[theorem]{Proposition}
\newtheorem{algorithm}{Algorithm}[section]
\theoremstyle{definition}
\newtheorem{remark}[theorem]{Remark}
\newtheorem{example}[theorem]{Example}
\title[There are many more positive maps than completely positive maps]{There are many more positive maps than\\[1mm]completely positive maps}
\author[I. Klep]{Igor Klep${}^{1}$}
\address{Igor Klep, Department of Mathematics,
The University of Auckland, New Zealand}
\email{igor.klep@auckland.ac.nz}
\thanks{${}^1$Supported by the Marsden Fund Council of the Royal Society of New Zealand. Partially supported by the Slovenian Research Agency grants P1-0222, L1-6722 and JI-8132.}
\author[S. McCullough]{Scott McCullough${}^2$}
\address{Scott McCullough, Department of Mathematics\\
  University of Florida\\ Gainesville 
   }
   \email{sam@math.ufl.edu}
\thanks{${}^2$Research supported by the NSF grant DMS-1361501}
\author[K. \v Sivic]{Klemen \v Sivic${}^3$}
\address{Klemen \v Sivic, Faculty of Mathematics and Physics, University of Ljubljana, Slovenia}
\email{klemen.sivic@fmf.uni-lj.si}
\thanks{${}^3$Partially supported by the Slovenian Research Agency grants P1-0222, L1-6722 and JI-8132.}
\author[A. Zalar]{Alja\v z Zalar${}^4$}
\address{Alja\v z Zalar, Faculty of Mathematics and Physics, University of Ljubljana, Slovenia}
\email{aljaz.zalar@fmf.uni-lj.si}
\thanks{${}^4$Supported by the Slovenian Research Agency grant JI-8132.}
\subjclass[2010]{13J30, 46L07, 52A40 (Primary); 47L25, 81P45, 90C22 (Secondary)}
\date{\today}
\keywords{positive map, completely positive map, positive polynomial, sum of squares, convex cone}
\begin{document}

\setcounter{tocdepth}{3}
\contentsmargin{2.55em}
\dottedcontents{section}[3.8em]{}{2.3em}{.4pc}
\dottedcontents{subsection}[6.1em]{}{3.2em}{.4pc}
\dottedcontents{subsubsection}[8.4em]{}{4.1em}{.4pc}

\makeatletter
\newcommand{\mycontentsbox}{%
{
\addtolength{\parskip}{.3pt}
\tableofcontents}}
\def\enddoc@text{\ifx\@empty\@translators \else\@settranslators\fi
\ifx\@empty\addresses \else\@setaddresses\fi
\newpage\mycontentsbox\newpage\printindex}
\makeatother

\begin{abstract}
	A $\ast$-linear map $\Phi$ between matrix spaces is positive if it maps positive semidefinite matrices to positive semidefinite ones, and is called completely positive if all its ampliations $I_n\otimes \Phi$ are positive. In this article quantitative bounds on the fraction of positive maps that are completely positive are proved. A main tool is the real algebraic geometry techniques developed by Blekherman to study the gap between positive polynomials and sums of squares. Finally, an algorithm to produce positive maps that are not completely positive is given.
\end{abstract}

\maketitle

\section{Introduction}\label{introd}

	For $\FF\in \{\RR,\CC\}$ and $n\in \NN$, let $M_n(\FF)$ be the vector space of $n\times n$ matrices over $\FF$
	equipped with the \textbf{involution} $\ast$ which is conjugate transposition for $\FF=\CC$ and transposition for $\FF=\RR$.
	Let $\HH_n$ (resp.\ $\Sym_n$) stand for its subspace $\left\{ A\in M_n(\FF)\colon A^\ast=A\right\}$ of hermitian (resp.\ real symmetric) matrices.
	A matrix $A\in \HH_n$ (resp.\ $A\in \Sym_n$) is \textbf{positive semidefinite (psd)}
	if and only if all of its eigenvalues are nonnegative;
	equivalently, $v^\ast A v\geq 0$ for all $v\in \FF^n$.
	We write $A\succeq 0$.
	A linear map $\Phi:\cS\to \cT$ between matrix spaces is $\ast$-linear if 
	$\Phi(A^\ast)=\Phi(A)^\ast$ for all
	$A\in \cS$. It is 
	\textbf{positive} if
	$\Phi(A)\succeq 0$ for every $A\succeq 0$ in its domain $\cS$.
	For $k\in \NN$, a $\ast$-linear map $\Phi: \cS\to \cT$ induces a $\ast$-linear map
		$$	\Phi^{(k)}:M_k(\FF)\otimes \cS=M_{k}(\cS)\to M_k(\FF)\otimes \cT=M_{k}(\cT),\quad M\otimes 			A\mapsto M\otimes \Phi(A)$$
	where $\otimes$ stands for the Kronecker tensor product of matrices.
	A $\ast$-linear map $\Phi$ 
	is \textbf{$k$-positive} if
	$\Phi^{(k)}$ is positive. If $\Phi$ is $k$-positive for every $k\in \NN$, then $\Phi$ is
	\textbf{completely positive (cp)}.
	Obviously, every cp map is positive, and
	the transpose map $M_2(\FF)\to M_2(\FF)$
	is positive but not $2$-positive and thus not cp.

Positive maps occur frequently in
matrix theory \cite{Hog,Wor76} and functional
analysis (e.g., positive linear functionals).
Cp maps are ubiquitous in
quantum physics (where they are called quantum channels or
 operations) \cite{NC10}, and operator algebra
\cite{PAU}. Both types of maps are also
 important topics in random matrix theory
and  free probability \cite{VDN92}. 
In quantum information theory  cp maps
are used to describe the quantum mechanical generalization of a noisy channel. The Stinespring representation theorem
\cite[Theorem 4.1]{PAU} provides the justification for their physical interpretation as reduction of a unitary evolution to a subsystem.
Positive maps that are not cp do not possess such physical realizability, since they may fail to preserve positivity on entangled states.
However, they do preserve positivity on separable states, and thus are
of great importance for detecting  entanglement of a system. We refer to \cite{AS06,ASY14,HSR03,P-GWPR06,SWZ11} for a small sample of the vast
quantum information theory literature on entanglement breaking maps; see also \cite{JKPR11,Sto08,PTT11}.
Verifying whether a linear map is positive is computationally intractable; numerical algorithms, based on 
Lasserre's \cite{Las09} polynomial sum of squares relaxations for detecting positivity are given in \cite{NZ+}.

Recently
Collins, Hayden,  Nechita \cite{CHN} studied
entanglement breaking maps from a
free probability viewpoint \cite{VDN92} using von Neumann algebras.  Among other
results they present random techniques for constructing $k$-positive
maps that are not $k+1$-positive
in large dimensions
\cite[Theorem 4.2]{CHN}.
The gap between positive and cp maps was also
investigated by Arveson \cite{Arv09-1,Arv09-2},
and Aubrun, Szarek, Werner, Ye, \. Zyczkowski \cite{SWZ08,ASY14}.
Arveson used operator algebra to
establish:

\begin{theorem}[Arveson \cite{Arv09-1}]\label{thm:arv}
Let $n,m\geq 2$.  Then the probability $p$ that a  positive map $\varphi:M_n(\CC)\to M_m(\CC)$ is cp satisfies $0<p<1.$
\end{theorem}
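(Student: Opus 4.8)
The plan is to read the probability $p$ as the ratio of solid angles of the two closed convex cones $\cC\subseteq\mathcal P$ inside the real vector space $V$ of all $\ast$-linear maps $M_n(\CC)\to M_m(\CC)$, where $\mathcal P$ denotes the cone of positive maps and $\cC$ the cone of completely positive maps (equivalently, the ratio of normalized volumes of the compact base $\{\Phi\in\mathcal P:\Tr\Phi(I_n)=1\}$ and its sub-body $\{\Phi\in\cC:\Tr\Phi(I_n)=1\}$; the various natural probabilistic models reduce to the same dichotomy). Under this reading $0<p<1$ follows from two facts: (i) $\cC$ has nonempty interior in $V$, which forces $p>0$; and (ii) $\cC\neq\mathcal P$, which, together with (i) and the closedness and convexity of $\cC$, forces $\mathcal P\setminus\cC$ to have nonempty interior and hence $p<1$.

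For (i) I would invoke Choi's theorem through the Choi isomorphism $\Phi\mapsto C_\Phi:=\sum_{i,j} E_{ij}\otimes\Phi(E_{ij})$, a linear bijection from $V$ onto the space of Hermitian matrices in $M_n(\CC)\otimes M_m(\CC)$ that carries $\cC$ onto the cone of positive semidefinite matrices. Since the psd cone is full-dimensional in the Hermitian matrices, $\cC$ is full-dimensional in $V$; in particular its intersection with the base hyperplane $\{\Tr\Phi(I_n)=1\}$ is a genuine convex body (every nonzero cp map $\Phi$ has $\Phi(I_n)=\sum_i V_iV_i^\ast$ a nonzero psd matrix, so $\Tr\Phi(I_n)>0$), and any measure on the base of $\mathcal P$ that is mutually absolutely continuous with Lebesgue measure assigns it positive mass.

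For (ii) I would exhibit, for every $n,m\ge 2$, a positive map $M_n(\CC)\to M_m(\CC)$ that is not $2$-positive. For $k\ge 2$ let $\rho_k:M_k(\CC)\to M_2(\CC)$ be the compression $A\mapsto P^\ast AP$ onto the upper-left $2\times 2$ corner, where $P:\CC^2\to\CC^k$ is the inclusion isometry, and let $\varepsilon_k:M_2(\CC)\to M_k(\CC)$, $B\mapsto PBP^\ast$, be the block embedding; both are completely positive and $\rho_k\varepsilon_k=\mathrm{id}_{M_2(\CC)}$. Let $\tau:M_2(\CC)\to M_2(\CC)$ be the transpose, which by the remark in the introduction is positive but not $2$-positive, and set $\Psi:=\varepsilon_m\circ\tau\circ\rho_n$. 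Then $\Psi$ is positive, being a composite of positive maps. If $\Psi$ were $2$-positive, then $\rho_m\circ\Psi\circ\varepsilon_n=\rho_m\varepsilon_m\,\tau\,\rho_n\varepsilon_n=\tau$ would be $2$-positive, since a composite of $2$-positive maps is $2$-positive and $\rho_m,\varepsilon_n$ are completely positive — a contradiction. Hence $\Psi\in\mathcal P\setminus\cC$.

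Finally I would run the convex-geometry argument for $p<1$. Take $\Psi\in\mathcal P\setminus\cC$; as $\cC$ is closed and convex, separate $\Psi$ from it by a linear functional $\ell$ on $V$ with $\ell\le 0$ on the cone $\cC$ and $\ell(\Psi)>0$. Fix $\Phi_0\in\Int\mathcal P$ (nonempty by (i), since $\cC\subseteq\mathcal P$). The half-open segment $(\Phi_0,\Psi]$ lies in $\Int\mathcal P$, and because $\ell(\Psi)>0$ the functional $\ell$ stays positive on a neighborhood of $\Psi$ within that segment; a small ball about such an interior point lies in $\Int\mathcal P\cap\{\ell>0\}\subseteq\mathcal P\setminus\cC$. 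Thus $\mathcal P\setminus\cC$ has nonempty interior, hence positive measure in the base, giving $p<1$; with (i) this yields $0<p<1$. The one point that genuinely needs care is pinning down the measure-theoretic meaning of $p$ so that the implication "full-dimensional subset $\Rightarrow$ positive probability" is available for both $\cC$ and $\mathcal P\setminus\cC$; the remaining ingredients — Choi's theorem, the transpose example, and the separation argument — are routine.
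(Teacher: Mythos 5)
The paper does not prove this theorem at all: it is quoted from Arveson, and the accompanying remark only sketches his route, which is dual --- he estimates the probability that a random state on $M_n(\CC)\otimes M_m(\CC)$ is separable, working with a specific probability measure induced on the state space from the Riemannian volume of a homogeneous space for $\U(nm)$. Your argument is a genuinely different, self-contained convex-geometry proof on the map side, and its ingredients are all sound: Choi's theorem does give full-dimensionality of the cp cone $\cC$ (hence $p>0$ for any measure equivalent to Lebesgue measure on a compact base); the map $\Psi=\varepsilon_m\circ\tau\circ\rho_n$ is positive, and the identity $\rho_m\circ\Psi\circ\varepsilon_n=\tau$ correctly shows it is not $2$-positive for all $n,m\geq 2$; and the separation-plus-segment argument correctly produces an open subset of $\mathcal P\setminus\cC$, hence $p<1$. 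This is essentially the argument underlying the quantitative results of the present paper (Theorem \ref{cor-1} and Corollary \ref{asimptotika} refine exactly this dichotomy with explicit volume bounds), and it is softer and more elementary than Arveson's operator-algebraic treatment.

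The one genuine gap is the point you yourself flag: the statement, as Arveson proves it, is about his particular measure on the state space (described in the remark following the theorem), not about Lebesgue measure on the trace-normalized base of the positive cone. Your argument establishes ``both $\cC$ and $\mathcal P\setminus\cC$ contain nonempty open sets,'' which yields $0<p<1$ only for measures that charge every nonempty open subset of the base. To literally recover Arveson's theorem you would still need to verify that the pushforward measure he constructs (or its transport to the cone of positive maps under the Choi/duality correspondence) is of this type; that verification is plausible but not automatic, since the measure arises from an orbit-space parametrization rather than being defined as a volume on the convex body itself. Within the normalization adopted in this paper --- where $p_{n,m}$ is \emph{defined} as a ratio of volumes of compact sections --- your proof is complete.
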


\begin{remark}
Theorem \ref{thm:arv} is established by considering the dual problem to estimating the probability that a positive map $\varphi:M_n(\CC)\to M_m(\CC)$ is cp, which is to
estimate the probability that a random state on $M_{n}(\CC)\otimes M_m(\CC)$ is separable. Now we briefly explain the probability distribution on the state space
from \cite{Arv09-1}.  Arveson introduces a compact Riemannian manifold $\mathcal V$ of dimension $n^2(2m-1)$ on which the unitary group 
$\U(nm)$ acts as a transitive group of isometries and induces a probability measure on $\mathcal V$. The state space can be parametrized as the orbit space of the subgroup 
$\{ \left[\lambda_{ij} I_m\right]_{i,j=1}^n\colon \lambda_{ij}\in \CC \}$ of $\U(nm)$ where $I_m$ stands for the identity $m\times m$ matrix, and as such inherits the probability measure from $\mathcal V$ which is the underlying measure in Theorem \ref{thm:arv}.
\end{remark}

Szarek, Werner and \. Zyczkowski 
use classical convexity and
geometry of Banach spaces to
improve upon Arveson's results by providing quantitative bounds on the probability $p$
(in the case where $n=m$) and establish 
its asymptotic behavior, see \cite[Theorem 5]{SWZ08}.\looseness=-1

In this paper we
investigate the gap  between positive and
completely positive maps 
by translating the problem into
the language of real algebraic geometry \cite{BCR98}.

\subsection{Main results and reader's guide} \label{sec-main-res}

The contribution of this paper is threefold. First, 
we will
study
nonnegative biquadratic biforms that
are not sums of squares
by
estimating volumes of appropriate cones of positive polynomials.
The study of positive polynomials
is one of the pillars of real algebraic geometry,
starting with
Artin's solution of Hilbert's 17th problem, cf.~\cite{Mar08,Lau09,Rez95,Put93,Sce09,Scw03,KS10,Pow11,Cim12,Oza13}.
To estimate the ratio between compact base sections of the cones of 
sums of squares biforms and 
nonnegative biquadratic biforms we shall employ
powerful techniques,
based on harmonic analysis
and classical convexity,
developed by
Blekherman \cite{Blek1} and Barvinok-Blekherman \cite{Barv-Blek}.

Let $\RR[\x,\y]$ be the vector space of real polynomials in the variables $\x:=(x_1,\ldots,x_n)$ and $\y:=(y_1,\ldots,y_m)$. Let $\RR[\x,\y]_{k_1, k_2}$ be the subspace of \textbf{biforms of bidegree} $(k_1,k_2)$, i.e., polynomials from $\RR[\x,\y]$ that are homogeneous of degree $k_1$ in $\x$ and of degree $k_2$ in $\y$.
Note that the dimension of $\RR[\x,\y]_{k_1, k_2}$ is equal to $\binom{n+k_1-1}{k_1}\binom{m+k_2-1}{k_2}$.
Let
\begin{align}
	\Pos^{(n,m)}_{(2k_1,2k_2)}& = \left\{ f\in \RR[\x,\y]_{2k_1,2k_2}\colon f(\x,\y)\geq 0\quad \text{for all }(\x,\y)\in\RR^n\times \RR^m \right\}, \label{pos-def}\\
	\Sq^{(n,m)}_{(2k_1,2k_2)} &= \left\{ f\in \RR[\x,\y]_{2k_1,2k_2} \colon f=\sum_{i}f_i^2\quad \text{for some }f_i\in \RR[\x,\y]_{k_1,k_2}\right\}, \label{sos-def}
\end{align}

\noindent be the cone of nonnegative biforms and the cone of sums of squares biforms;
respectively.
In all but a few stray cases the cone of sums of squares biforms is strictly contained in the cone of nonnegative biforms.

\begin{theorem}[Choi, Lam, Reznick \protect{\cite[Theorem 8.4]{CLR}}] \label{n,m>3-intro}
	Let $n,m\geq 2$. Then $\Pos^{(n,m)}_{(2k_1,2k_2)}=\Sq^{(n,m)}_{(2k_1,2k_2)}$ if and only if 
	$n=2$ and $k_2=1$ or $m=2$ and $k_1=1$.
\end{theorem}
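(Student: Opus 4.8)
The plan is to reduce the biform statement to the classical Hilbert picture for forms (the 1888 theorem on when nonnegative forms are sums of squares) together with the Choi--Lam examples of nonnegative biquadratic biforms that are not sums of squares.

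\smallskip

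\textbf{Easy direction (the equalities).} First I would prove that $\Pos^{(n,m)}_{(2k_1,2k_2)}=\Sq^{(n,m)}_{(2k_1,2k_2)}$ when $n=2$ and $k_2=1$ (the case $m=2$, $k_1=1$ being symmetric). Fix $\y$ and view $f(\x,\y)$ as a binary form of degree $2k_1$ in $\x=(x_1,x_2)$ whose coefficients are quadratic forms in $\y$; more precisely, write $f(\x,\y)=\sum_{i+j=2k_1} c_{ij}(\y)\,x_1^i x_2^j$. The workhorse here is that a nonnegative binary form of even degree is a sum of two squares of binary forms (classical; or Hilbert). One then wants to run this factorization ``with parameters'' so that the square roots depend polynomially (or at least in a sum-of-squares-controlled way) on $\y$. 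A clean way: nonnegativity of $f$ for all real $\x$ and $\y$ means $f(1,t,\y)\ge 0$ as a polynomial in the single variable $t$ with coefficients in $\RR[\y]_2$, for each fixed $\y$; apply the theory of positive semidefinite biforms which are nonnegative and of degree $2$ in one block of variables — this is exactly the Hilbert-type case $(2k_1,2)$ read in the other order, and it is known that such biforms are sums of squares (essentially a matrix-polynomial Fejér--Riesz / Schur-complement argument, or direct reference to \cite{CLR}). So for this half I would simply cite the relevant computation from Choi--Lam--Reznick \cite{CLR} that these boundary cases are SOS, or reproduce the short Fejér--Riesz argument for degree-two-in-one-set-of-variables biforms.

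\smallskip

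\textbf{Hard direction (strict containment in all other cases).} Here I must exhibit, for every pair $(n,m,k_1,k_2)$ with $n,m\ge 2$ and \emph{not} in the two listed families, a nonnegative biform that is not a sum of squares. The base cases are the biquadratic ones, $(2,2,1,1)$-type: the celebrated Choi matrix / Choi--Lam biquadratic biform in $3+3$ variables that is nonnegative but not SOS, and its $(2,2)$-variable analogues — these are precisely what \cite[Theorem 8.4]{CLR} is built on, so I can invoke them directly. The remaining cases follow by a dimension-padding / degree-lifting argument: given a nonnegative non-SOS biform $g\in\RR[\x',\y']_{2k_1',2k_2'}$ in fewer variables or lower degree, form $g$ times a suitable power of $(x_1^2+\cdots+x_n^2)$ and $(y_1^2+\cdots+y_m^2)$, or substitute/restrict variables, and check that (i) nonnegativity is preserved and (ii) being a sum of squares would, upon the reverse operation (setting the extra variables to zero, or using that SOS is preserved under restriction to a subspace), force $g$ itself to be SOS — a contradiction. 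The only subtlety is to make sure the multiplier construction lands in the correct bidegree and that the ``restriction kills SOS-ness'' step is valid; for the $(x_1^2+\cdots+x_n^2)^r$ multiplier the point is that on the locus where only $x_1,x_2$ are nonzero the multiplier is $(x_1^2+x_2^2)^r$, a perfect square, so an SOS decomposition upstairs restricts to an SOS decomposition downstairs after dividing by that square (using that the binary form $(x_1^2+x_2^2)^r$ divides each restricted square, which holds because it has no real zeros and appears to an even power).

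\smallskip

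\textbf{Main obstacle.} The genuine content is \emph{not} in the padding argument but in having the base examples of nonnegative non-SOS biquadratic biforms in $(n,m)$ variables for the full range $n,m\ge 2$ except the excluded families — i.e., the Choi--Lam construction and the verification that its lowest cases $(2,n,1,1)$ with $n\ge 3$ and $(n,m,1,1)$ with $n,m\ge 3$ really fail to be SOS. Since Theorem \ref{n,m>3-intro} is quoted from \cite[Theorem 8.4]{CLR}, I would in practice cite those examples rather than reconstruct them; the original proof there combines explicit matrix constructions with a separating-functional (dual cone) argument showing the candidate biform lies outside $\Sq^{(n,m)}_{(2k_1,2k_2)}$. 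If forced to be self-contained, reproducing that separation argument — exhibiting a linear functional nonnegative on all squares $f_i^2$ with $f_i\in\RR[\x,\y]_{k_1,k_2}$ but negative on the candidate biform — is the step that requires real work.
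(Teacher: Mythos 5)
The first thing to say is that the paper does not prove this statement at all: it is imported verbatim from Choi--Lam--Reznick \cite[Theorem 8.4]{CLR} and used as a black box, so your sketch can only be judged against what the classification logically requires. Your ``easy direction'' is sound in outline (for $n=2$, $k_2=1$ the biform is an $m\times m$ positive semidefinite matrix of binary forms, and the one-variable matrix Fej\'er--Riesz theorem yields the sum-of-squares decomposition), and the overall plan for the converse --- a few minimal explicit non-SOS examples plus a padding induction --- is the right shape. But there are two genuine gaps. First, your inventory of base cases is partly wrong and incomplete: $(n,m,k_1,k_2)=(2,m,1,1)$ with $m\ge 3$, which you list among the cases that ``really fail to be SOS,'' lies \emph{inside} the equality family ($n=2$ and $k_2=1$; this is Calder\'on's theorem), while the genuine failure cases with $\min(n,m)=2$ --- e.g.\ bidegree $(4,4)$ in $(2,2)$ variables, or bidegree $(2,4)$ in $(2,3)$ variables --- cannot be reached by padding the $3\times 3$ biquadratic Choi example, because restricting that example to two of the $x$-variables lands back in the equality family. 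Separate explicit examples are needed for these low-variable, higher-degree cases (CLR construct them); without them your induction has no base for a whole family of parameters.

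Second, the degree-padding device you describe is unsound. Multiplying by $(x_1^2+\cdots+x_n^2)^r$ can convert a nonnegative non-SOS form into an SOS one (the Motzkin form times $x^2+y^2+z^2$ is a sum of squares), so ``$g\cdot(\sum_i x_i^2)^r$ SOS $\Rightarrow g$ SOS'' fails in general; and the divisibility step you invoke after restricting to the $(x_1,x_2)$-plane is also false: $x_1^2+x_2^2$ is irreducible but not a \emph{real} prime (its real zero set is not Zariski dense in its complex zero set), so it may divide a sum of squares without dividing the individual summands --- witness $x_1^2+x_2^2$ itself. Your stated justification (``because it has no real zeros'') is exactly the condition under which the divisibility lemma breaks down. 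The standard safe padding multiplies instead by a monomial square such as $x_1^{2(k_1-k_1')}y_1^{2(k_2-k_2')}$: each $h_i$ in a putative SOS decomposition must then vanish on $\{x_1=0\}$, one divides out $x_1^2$ and inducts, staying within the correct bidegrees. With that replacement and the full list of minimal examples from \cite{CLR}, the argument closes; as written, it does not.
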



\noindent 
We shall estimate the gap between the cones $\Pos^{(n,m)}_{(2k_1,2k_2)}$ and 
$\Sq^{(n,m)}_{(2k_1,2k_2)}$ by comparing volumes of compact sections of these cones obtained by intersecting each with a suitably chosen affine hyperplane
$\mathcal{H}^{(n,m)}_{(2k_1,2k_2)}\subset \RR[\x,\y]_{2k_1,2k_2}.$
Let $T:=S^{n-1}\times S^{m-1}$
and consider the product measure $\sigma=\sigma_1\times\sigma_2$ on $T$,
where $S^{n-1}\subseteq \RR^n$, $S^{m-1}\subseteq \RR^m$ are the unit spheres and
$\sigma_1$, $\sigma_2$ are the normalized Lebesgue measures on $S^{n-1}$ and $S^{m-1}$, respectively.
The $L^p$ norm  of a biform $f\in \RR[\x,\y]_{2k_1,2k_2}$ on $T$ is given by
	\begin{equation*}\label{measure}
		\left\| f \right\|_p^p= \int_{T} |f|^p\ \dd\sigma=
		\int_{x\in S^{n-1}}\left(\int_{y\in S^{m-1}} |f(x,y)|^p \;\dd\sigma_2(y)\right) \;\dd	
		\sigma_1(x),
	\end{equation*} 
while the supremum norm by
	$$\|f\|_{\infty}:=\max_{(x,y)\in T} |f(x,y)|.$$
\noindent Let $\mathcal{H}^{(n,m)}_{(2k_1,2k_2)}$ be the hyperplane
 of biforms from $\RR[\x,\y]_{2k_1,2k_2}$ of average 1 on $T$, i.e.,
	$$\mathcal H^{(n,m)}_{(2k_1,2k_2)}=\left\{ f\in \RR[\x,\y]_{2k_1,2k_2}\colon \int_T f\; \dd\sigma=1 \right\}.$$
Let $\left(\Pos^{(n,m)}_{(2k_1,2k_2)}\right)'$ and $\left(\Sq^{(n,m)}_{(2k_1,2k_1)}\right)'$
be the sections
of the cones $\Pos^{(n,m)}_{(2k_1,2k_2)}$ and $\Sq^{(n,m)}_{(2k_1,2k_2)}$,
	\begin{eqnarray*}
		\left(\Pos^{(n,m)}_{(2k_1,2k_2)}\right)' &=& \Pos^{(n,m)}_{(2k_1,2k_2)}\;\bigcap\; \cH^{(n,m)}_{(2k_1,2k_2)},\\
		\left(\Sq^{(n,m)}_{(2k_1,2k_2)}\right)'	&=& \Sq^{(n,m)}_{(2k_1,2k_2)}\;\bigcap\; \cH^{(n,m)}_{(2k_1,2k_2)}.
	\end{eqnarray*}
Thus
$\left(\Pos^{(n,m)}_{(2k_1,2k_2)}\right)'$ and $\left(\Sq^{(n,m)}_{(2k_1,2k_2)}\right)'$ are convex and compact full-dimensional sets in the finite dimensional hyperplane $\cH^{(n,m)}_{(2k_1,2k_2)}$.
For technical reasons we translate these sections by subtracting the polynomial
$(\sum_{i=1}^n x_i^2)^{k_1}(\sum_{j=1}^m y_j^2)^{k_2}$, i.e.,
\[
\begin{split}
	\widetilde{\Pos}^{(n,m)}_{(2k_1,2k_1)}
			&=	\left\{ f\in \RR[\x,\y]_{2k_1,2k_2}\colon  f+(\sum_{i=1}^n x_i^2)^{k_1}(\sum_{j=1}^m y_j^2)^{k_2}
				\in \left(\Pos^{(n,m)}_{(2k_1,2k_2)}\right)'\right\},\\
					\widetilde{\Sq}^{(n,m)}_{(2k_1,2k_2)} &= \left\{ f\in \RR[\x,\y]_{2k_1,2k_2}\colon  f+(\sum_{i=1}^n x_i^2)^{k_1}(\sum_{j=1}^m y_j^2)^{k_2}
				\in \left(\Sq^{(n,m)}_{(2k_1,2k_2)}\right)' \right\}.
\end{split}
\]
Let $\cM:=\cM^{(n,m)}_{(2k_1,2k_2)}$  be the hyperplane  of
biforms from $\RR[\x,\y]_{2k_1,2k_2}$ with average 0 on $T$,
	\begin{equation} \label{M-def}
		\cM=\left\{ f\in \RR[\x,\y]_{2k_1,2k_2}\colon \int_T f\; \dd\sigma=0 \right\}. 
	\end{equation}
Notice that
	$$\widetilde{\Pos}^{(n,m)}_{(2k_1,2k_1)}\subseteq \cM
		\quad\text{and}\quad
		\widetilde{\Sq}^{(n,m)}_{(2k_1,2k_1)}\subseteq \cM.$$
The natural $L^2$ inner product in $\RR[\x,\y]_{2k_1,2k_2}$ is defined by 
	$$\langle f,g\rangle=\int_{T}fg\; d\sigma.$$
With this inner product $\cM$ is a Hilbert subspace of $\RR[\x,\y]_{2k_1,2k_2}$ of dimension $D_{\cM}$ 
and so it is isomorphic to $\RR^{D_\cM}$ as a Hilbert space.
Let $S_{\cM}$, $B_\cM$ be the unit sphere and the unit ball in $\cM$, respectively. 
Let $\psi:\RR^{D_\cM}\to \cM$ be a unitary isomorphism and $\psi_{\ast}\mu$ the pushforward of the
Lebesgue measure $\mu$ on $\RR^{D_{\cM}}$ to $\cM$, i.e., $\psi_\ast\mu(E):=\mu(\psi^{-1}(E))$ for every
Borel measurable set $E\subseteq \cM$.

\begin{lemma} \label{unique-pushforward}
	The measure of a Borel set $E\subseteq \cM$ does not depend on the choice of the unitary isomorphism $\psi$, i.e.,
	if $\psi_1:\RR^{D_\cM}\to \cM$ and $\psi_2:\RR^{D_\cM}\to \cM$ are unitary isomorphisms, then
	$(\psi_1)_\ast\mu(E)=(\psi_2)_\ast\mu(E)$.
\end{lemma}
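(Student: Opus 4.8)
The plan is to reduce the statement to the rotation-invariance of Lebesgue measure on $\RR^{D_\cM}$. Given two unitary isomorphisms $\psi_1,\psi_2\colon\RR^{D_\cM}\to\cM$, I would consider the composition $O:=\psi_2^{-1}\circ\psi_1\colon\RR^{D_\cM}\to\RR^{D_\cM}$. Since each $\psi_i$ preserves inner products (which is what ``unitary'' means here) and $\psi_2^{-1}$ does too, the map $O$ is a linear isometry of $\RR^{D_\cM}$ onto itself, hence an orthogonal transformation; in particular $|\det O|=1$, and the same holds for $O^{-1}$.

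Next I would invoke the standard change-of-variables fact that Lebesgue measure $\mu$ on $\RR^{D_\cM}$ satisfies $\mu\bigl(L(A)\bigr)=|\det L|\,\mu(A)$ for every invertible linear map $L$ and every Borel set $A\subseteq\RR^{D_\cM}$. Applying this with $L=O^{-1}$ gives $\mu\bigl(O^{-1}(A)\bigr)=\mu(A)$ for all Borel $A$, i.e.\ $\mu$ is invariant under $O$.

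Finally, writing $\psi_1=\psi_2\circ O$, so that $\psi_1^{-1}=O^{-1}\circ\psi_2^{-1}$, for any Borel set $E\subseteq\cM$ one computes
\[
(\psi_1)_\ast\mu(E)=\mu\bigl(\psi_1^{-1}(E)\bigr)=\mu\bigl(O^{-1}(\psi_2^{-1}(E))\bigr)=\mu\bigl(\psi_2^{-1}(E)\bigr)=(\psi_2)_\ast\mu(E),
\]
where the third equality uses the $O$-invariance of $\mu$, and $\psi_2^{-1}(E)$ is Borel because $\psi_2$ is a homeomorphism (a linear bijection between finite-dimensional normed spaces). This yields the assertion.

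There is no genuine obstacle here: the content is exactly that a unitary change of coordinates between the models $\RR^{D_\cM}$ of $\cM$ differs by an element of the orthogonal group, under which Lebesgue measure is invariant. The only points deserving a word are (i) that $O$ really is orthogonal, which is immediate from the definition of a unitary isomorphism, and (ii) that the pushforwards are well defined at all, i.e.\ that each $\psi_i^{-1}$ carries Borel sets to Borel sets, which holds since each $\psi_i$ is a continuous linear bijection of finite-dimensional spaces.
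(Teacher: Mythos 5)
Your proof is correct and follows essentially the same route as the paper: both arguments reduce the claim to the observation that $\psi_2^{-1}\circ\psi_1$ is a linear isometry (hence orthogonal) of $\RR^{D_\cM}$ under which Lebesgue measure is invariant. Your write-up merely spells out the details (the determinant argument and Borel measurability) that the paper leaves implicit.
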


\begin{proof}
	We have
		\begin{eqnarray*}
			(\psi_2)_\ast\mu(E)
				&=&
					\mu(\psi_2^{-1}(E))=\mu((\psi_2^{-1}\circ\psi_1\circ\psi_1^{-1})(E))=
					\mu((\psi_2^{-1}\circ\psi_1)(\psi_1^{-1}(E)))\\
				&=& \mu(\psi_1^{-1}(E))=(\psi_1)_\ast\mu(E),
		\end{eqnarray*}
	where the first equality in the second line holds since $\psi_2^{-1}\circ\psi_1$ is a linear isometry and $\mu$ is the Lebesgue measure.
\end{proof}

The bounds for the volume of the section of nonnegative biforms are as follows.

\begin{theorem}\label{psd-intro}
	For $n,m\in \NN$ we have
		$$c_{2k_1,2k_2}\leq \left(\frac{\Vol \widetilde{\Pos}^{(n,m)}_{(2k_1,2k_2)}}{\Vol B_{\cM}}\right)^{\frac{1}{D_{\cM}}}\leq
		2\left(\min\left(\frac{2k_1^2}{2k_1^2+n},  \frac{2k_2^2}{2k_2^2+m}\right)\right)^{\frac{1}{2}},$$
	where
		$$c_{2k_1,2k_2}=\ds
			\left\{\begin{array}{lr}
				3^{3}\cdot 10^{-\frac{20}{9}} \max(n,m)^{-\frac{1}{2}},& \text{if } k_1=k_2=1\\[1mm]
				\exp(-3) \left(2\lceil\max(n,m)\ln(2\max(k_1,k_2)+1)\rceil\right)^{-\frac{1}{2}} ,& \text{otherwise.}
			\end{array}\right.$$
\end{theorem}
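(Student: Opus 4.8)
The plan is to pass from the biform cones to convex bodies in the Euclidean space $(\cM,\langle\cdot,\cdot\rangle)$ and then run the Blekherman--Barvinok program: Urysohn's inequality for the upper bound, and the Bourgain--Milman inequality combined with Urysohn applied to a polar body for the lower bound, both fed by $L^p$-norm comparison estimates for biforms obtained from the explicit reproducing kernels of the spaces $\RR[\x,\y]_{2k_1,2k_2}$. I would first record the reformulation. Writing $\mathbf 1:=(\sum_i x_i^2)^{k_1}(\sum_j y_j^2)^{k_2}$, which restricts to the constant $1$ on $T$, and using homogeneity to reduce nonnegativity on $\RR^n\times\RR^m$ to nonnegativity on $T$, one has
\[
\widetilde{\Pos}^{(n,m)}_{(2k_1,2k_2)}=\bigl\{f\in\cM:\ f(a,b)\ge-1\ \text{for all }(a,b)\in T\bigr\}=\bigl(\conv\{-\overline K_{a,b}:(a,b)\in T\}\bigr)^{\circ},
\]
where $K_{a,b}\in\RR[\x,\y]_{2k_1,2k_2}$ is the reproducing kernel of $\langle\cdot,\cdot\rangle$ (so $\langle f,K_{a,b}\rangle=f(a,b)$), $\overline K_{a,b}$ is its orthogonal projection onto $\cM$, and ${}^{\circ}$ is the polar inside $\cM$. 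Since $\SO(n)\times\SO(m)$ acts transitively on $T$ and $K_{a,b}=K^{(1)}_a\cdot K^{(2)}_b$ factors over the two tensor factors, every $\overline K_{a,b}$ has the same norm $\sqrt{D_{\cM}}$ and $\int_T\overline K_{a,b}\,\dd\sigma=0$; hence $\widetilde{\Pos}^{(n,m)}_{(2k_1,2k_2)}$ is compact with $0\in\Int$, and it contains the symmetric $L^\infty$-ball $K:=\{f\in\cM:\|f\|_\infty\le1\}$.

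For the upper bound I would invoke Urysohn's inequality $\bigl(\Vol\widetilde{\Pos}^{(n,m)}_{(2k_1,2k_2)}/\Vol B_{\cM}\bigr)^{1/D_{\cM}}\le\overline w\bigl(\widetilde{\Pos}^{(n,m)}_{(2k_1,2k_2)}\bigr)$, where $\overline w$ is the mean width normalised so that $\overline w(B_{\cM})=1$, so that $\overline w\bigl(\widetilde{\Pos}^{(n,m)}_{(2k_1,2k_2)}\bigr)=\mathbb E_{u\in S_{\cM}}\bigl[h(u)\bigr]$ with $h(u)=\sup\{\langle u,g\rangle:\ g\in\Pos^{(n,m)}_{(2k_1,2k_2)},\ \int_T g\,\dd\sigma=1\}$ (using that $f\mapsto f+\mathbf 1$ identifies the body with $\{g-\mathbf 1:g\succeq0,\int_T g=1\}$ and $u\perp\mathbf 1$). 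The point is to estimate $h(u)$ on average, and this is where the minimum and the factor $2$ enter: a nonnegative biform $g$ of average $1$ paired against a mean-zero biform $u$ must lose a Funk--Hecke contraction coming from the rank-one averaging operator $v\mapsto\bigl(\int_{S^{n-1}}(a\cdot x)^{2k_1}\,\dd\sigma_1\bigr)^{-1}\int_{S^{n-1}}(a\cdot x)^{2k_1}v\,\dd\sigma_1$, whose largest non-trivial eigenvalue is $\tfrac{2k_1}{n+2k_1}\le\tfrac{2k_1^2}{2k_1^2+n}$, and this contraction may be applied in the $\x$-factor or in the $\y$-factor; tracking it through the bi-spherical-harmonic decomposition of $u$ together with the bound $\|f\|_{L^1(T,\sigma)}\le2$ valid on the body yields $\overline w\bigl(\widetilde{\Pos}^{(n,m)}_{(2k_1,2k_2)}\bigr)\le 2\bigl(\min(\tfrac{2k_1^2}{2k_1^2+n},\tfrac{2k_2^2}{2k_2^2+m})\bigr)^{1/2}$, which is the assertion.

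For the lower bound I would use $\widetilde{\Pos}^{(n,m)}_{(2k_1,2k_2)}\supseteq K$, noting that the polar $K^{\circ}$ inside $\cM$ has support function $h_{K^{\circ}}(u)=\|u\|_\infty$. The Bourgain--Milman inequality $\bigl(\Vol K/\Vol B_{\cM}\bigr)^{1/D_{\cM}}\bigl(\Vol K^{\circ}/\Vol B_{\cM}\bigr)^{1/D_{\cM}}\ge c_0>0$ together with Urysohn applied to $K^{\circ}$ gives
\[
\Bigl(\frac{\Vol\widetilde{\Pos}^{(n,m)}_{(2k_1,2k_2)}}{\Vol B_{\cM}}\Bigr)^{1/D_{\cM}}\ \ge\ \Bigl(\frac{\Vol K}{\Vol B_{\cM}}\Bigr)^{1/D_{\cM}}\ \ge\ \frac{c_0}{\overline w(K^{\circ})}\ =\ \frac{c_0}{\mathbb E_{u\in S_{\cM}}\|u\|_\infty},
\]
so everything reduces to an upper estimate of the expected $L^\infty(T)$-norm of a random unit biform. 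Here I would argue as in Barvinok--Blekherman: for fixed $(x,y)\in T$ one has $u(x,y)=\langle u,E_{x,y}\rangle$ with $\|E_{x,y}\|=\sqrt{D_{\cM}}$, whence $\mathbb E_u|u(x,y)|^{2p}\le(2p-1)!!$ and so $\mathbb E_u\|u\|_{L^{2p}(T)}^{2p}\le(2p-1)!!$ for every $p\in\NN$; combined with the Nikolskii-type inequality $\|v\|_\infty\le D_p^{1/2p}\|v\|_{L^{2p}(T)}$ (valid since $v^p$ is a biform of bidegree $(2pk_1,2pk_2)$, of dimension $D_p=\binom{n+2pk_1-1}{2pk_1}\binom{m+2pk_2-1}{2pk_2}$) this gives $\mathbb E_u\|u\|_\infty\le(D_p(2p-1)!!)^{1/2p}$. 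Optimising over the integer $p$ — the optimum is of order $\max(n,m)$, which produces the ceiling $\lceil\max(n,m)\ln(2\max(k_1,k_2)+1)\rceil$, the factor $e^{-3}$ and the logarithm — yields the general constant $c_{2k_1,2k_2}$; in the distinguished biquadratic case $k_1=k_2=1$, where $D_p$ is small and explicit, a sharper estimate of $\mathbb E_u\|u\|_\infty$ via a net on $T$ of controlled cardinality and the concentration of $u(x,y)$ produces the improved constant $3^{3}\cdot 10^{-20/9}\max(n,m)^{-1/2}$.

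The main obstacle is the upper-bound step: unlike ball- or ellipsoid-containment (which here yield only an $O(1)$ bound, since the body genuinely contains biforms of $L^2$-norm of order $1$), one must exploit that global nonnegativity on $T$ couples the spherical-harmonic components of $g\in\Pos^{(n,m)}_{(2k_1,2k_2)}$, so that a nonnegative $g$ of average $1$ cannot be aligned with a typical $u$ in all harmonic degrees simultaneously; quantifying this coupling is precisely the Blekherman--Barvinok circle of ideas and is where the sharp constants $\tfrac{2k_i^2}{2k_i^2+n_i}$ must be extracted. A secondary, purely computational obstacle is carrying out the optimisation over $p$ in the lower bound carefully enough to recover $e^{-3}$ and $\ln(2\max(k_1,k_2)+1)$, and performing the ad hoc biquadratic computation behind $3^{3}\cdot 10^{-20/9}$. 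Finally one dispatches the routine verifications implicit above: injectivity of restriction to $T$ on $\RR[\x,\y]_{2k_1,2k_2}$ (so $D_{\cM}=\binom{n+2k_1-1}{2k_1}\binom{m+2k_2-1}{2k_2}-1$ and $\cM$ may be treated as a space of functions on $T$), $0\in\Int\widetilde{\Pos}^{(n,m)}_{(2k_1,2k_2)}$ and $0\in\Int K^{\circ}$, and the measure-identifications as in Lemma \ref{unique-pushforward}.
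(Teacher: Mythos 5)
Your reduction of the lower bound to controlling $\mathbb{E}_{u\in S_{\cM}}\|u\|_\infty$ is sound, and your estimate of that quantity (Barvinok's $\|v\|_\infty\le D_p^{1/2p}\|v\|_{L^{2p}(T)}$ with $D_p=\binom{n+2pk_1-1}{2pk_1}\binom{m+2pk_2-1}{2pk_2}$, plus the moment bound $\mathbb{E}\|u\|_{2p}^{2p}\le(2p-1)!!$ and the choice $p\asymp\max(n,m)\ln(2\max(k_1,k_2)+1)$) is essentially the argument the paper uses. However, your passage from $\mathbb{E}\|u\|_\infty$ to the volume lower bound via Bourgain--Milman loses a universal constant $c_0<1$ that is not present in $c_{2k_1,2k_2}$, so as written you do not recover the stated constant. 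The clean route is to avoid Bourgain--Milman entirely: since the gauge of $\widetilde{\Pos}^{(n,m)}_{(2k_1,2k_2)}$ in $\cM$ is $G(f)=|\min_T f|\le\|f\|_\infty$, the exact polar-integration formula $\bigl(\Vol\cK/\Vol B_{\cM}\bigr)^{1/D_{\cM}}=\bigl(\int_{S_{\cM}}G^{-D_{\cM}}\,\dd\widetilde\mu\bigr)^{1/D_{\cM}}$ combined with H\"older and Jensen gives $\ge\bigl(\int_{S_{\cM}}\|f\|_\infty\,\dd\widetilde\mu\bigr)^{-1}$ with no extraneous factor. (The improved constant for $k_1=k_2=1$ also does not need a net argument; it comes from the same $L^{2k_0}$ route with the specific choice $2k_0=9\max(n,m)$ and a Stirling estimate for $\binom{10\ell}{9\ell}^{1/9\ell}$.)

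The upper bound is where your proposal has a genuine gap. You propose Urysohn applied directly to $\widetilde{\Pos}^{(n,m)}_{(2k_1,2k_2)}$ and assert that its mean width is at most $2\bigl(\min(\tfrac{2k_1^2}{2k_1^2+n},\tfrac{2k_2^2}{2k_2^2+m})\bigr)^{1/2}$ via a ``Funk--Hecke contraction'' heuristic, but no argument is given; the support function $h(u)=\sup\{\langle u,g\rangle:g\succeq0,\ \int_Tg\,\dd\sigma=1\}$ is taken over a cone section whose extreme points are not understood (for $n,m\ge3$ they include nonnegative biforms that are not sums of squares), and it is not at all clear that averaging $h(u)$ over $S_{\cM}$ produces this bound with this constant --- you yourself flag this as ``the main obstacle.'' The paper's proof goes through the polar body instead: by the $\SO(n)\times\SO(m)$-invariance the origin is the Santal\'o point, so Blaschke--Santal\'o gives $\Vol\widetilde{\Pos}\cdot\Vol\widetilde{\Pos^\circ}\le(\Vol B_{\cM})^2$; the identity $B_\infty=\widetilde{\Pos}\cap(-\widetilde{\Pos})$ and Rogers--Shephard give $\Vol B_\infty^\circ\le2^{D_{\cM}}\Vol\widetilde{\Pos^\circ}$ (this is where the factor $2$ comes from); and finally Kellogg's inequality $\|\langle\nabla f^y,\nabla f^y\rangle\|_\infty\le4k_1^2\|f^y\|_\infty^2$ yields $B_\infty\subseteq B_{\gr_\x}$, so that the gradient ellipsoid and the eigenvalue bound $\langle f,f\rangle_{\gr_\x}\ge\tfrac{2k_1^2+n}{2k_1^2}\langle f,f\rangle$ (and its $\y$-analogue, whence the minimum) control $\Vol B_\infty^\circ$ from below. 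None of these steps appears in your sketch, and without them the stated upper bound is not established.
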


Next we give bounds for the volume of the section of sums of squares biforms.

\begin{theorem} \label{squares-intro}
	For integers $n,m\geq 3$ we have
		$$d_{2k_1,2k_2}
			\leq
			\left(\frac{\Vol \widetilde{\Sq}^{(n,m)}_{(2k_1,2k_2)}}{\Vol B_\cM}\right)^{\frac{1}{D_{\cM}}}
			\leq e_{2k_1,2k_2},$$
	where
		\begin{eqnarray*}
		d_{2k_1,2k_2} &=&
			\left\{\begin{array}{lr}
				 2^{-8}\cdot 6^{-\frac{1}{2}} \cdot \frac{\sqrt{nm+n+m}}{(n+4)(m+4)},& \text{if } k_1=k_2=1\\
				\frac{(k_1!\ k_2!)^\frac{3}{2}}{2\sqrt{6}\cdot4^{2k_1+2k_2}\cdot \sqrt{(2k_1)!\ (2k_2)!}} \frac{  n^{\frac{k_1}{2}}  m^{\frac{k_2}{2}}  }{(\frac{n}{2}+2k_1)^{k_1} (\frac{m}{2}+2k_2)^{k_2}},&
						 \text{otherwise}
			\end{array}\right.,\\
		e_{2k_1,2k_2}&=&
			\left\{\begin{array}{lr}
				2^{10}\sqrt{6} \cdot \frac{1}{\sqrt{nm+n+m}},& \text{if } k_1=k_2=1\\
				2\sqrt{6}\cdot 4^{2k_1+2k_2}\cdot \sqrt{\frac{(2k_1)!\ (2k_2)!}{k_1!\ k_2!}} \cdot
					n^{-\frac{k_1}{2}} m^{-\frac{k_2}{2}},& \text{otherwise}
			\end{array}\right..
		\end{eqnarray*}
\end{theorem}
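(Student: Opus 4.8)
The plan is to adapt Blekherman's method (as used for sums of squares forms in one set of variables) to the biform setting, via a precise understanding of the cone $\Sq^{(n,m)}_{(2k_1,2k_2)}$ and its dual. First I would set up the ambient Hilbert space: the relevant feature is that $\widetilde{\Sq}^{(n,m)}_{(2k_1,2k_2)}$ sits inside $\cM$, and its dual cone (with respect to the $L^2$ inner product on $T$) is, up to the translation by $(\sum x_i^2)^{k_1}(\sum y_j^2)^{k_2}$, the cone generated by the point evaluations $\ell_{(u,v)}:f\mapsto f(u,v)$ for $(u,v)\in T$ — or rather, by the squared-monomial functionals coming from the Veronese-type map $\nu:(u,v)\mapsto \big((u^{\alpha}v^{\beta})_{|\alpha|=k_1,|\beta|=k_2}\big)$, since $f\in\Sq$ iff $f(u,v)=\langle Q\,\nu(u,v),\nu(u,v)\rangle$ for some psd Gram matrix $Q$. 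So the two cones to compare are: the section of the psd-Gram-matrix cone, and the section of its dual (the cone spanned by $\nu(u,v)\nu(u,v)^\ast$). This is exactly the configuration where Barvinok--Blekherman volume estimates apply.

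Next I would invoke the general volume comparison principle: for a convex body $K$ that is the section of a cone $C$ by the hyperplane $\{\langle f, e\rangle = 1\}$ where $e$ is a fixed interior point, $(\Vol K/\Vol B)^{1/D}$ is controlled from above and below by moments $\left(\int_{S}\langle \text{(extreme functionals)}, \theta\rangle\,d\theta\right)$-type quantities — concretely, by the $L^1$ versus $L^2$ (and $L^\infty$) norms of the "Christoffel--Darboux"-type kernel associated with $\nu$. For the sum-of-squares cone, the relevant object is the reproducing kernel $K(u,v;u',v')=\langle \nu(u,v),\nu(u',v')\rangle$ restricted to $T$, and the key computation is that, by the product structure of $T = S^{n-1}\times S^{m-1}$ and orthogonality of spherical harmonics, this kernel factors as (a Gegenbauer/zonal polynomial in the $\x$-angle) times (one in the $\y$-angle). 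I would then read off $\|K\|_1$, $\|K\|_2$, $\|K\|_\infty$ in closed form: $\|K\|_\infty = \dim \RR[\x,\y]_{k_1,k_2}$, $\|K\|_2$ is a ratio of dimensions, and $\|K\|_1$ is the quantity requiring the Gegenbauer integral estimates. The constants $4^{2k_1+2k_2}$, $\sqrt{(2k_i)!/k_i!}$, and the powers $n^{k_1/2}m^{k_2/2}$ versus $(\tfrac n2 + 2k_1)^{k_1}$ all come out of these explicit spherical-harmonic computations, with the $k_1=k_2=1$ case split off because there the biquadratic structure allows sharper elementary bounds.

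The main obstacle I expect is the lower bound $d_{2k_1,2k_2}$, which requires controlling $\|K\|_1$ from below (equivalently, showing the dual cone's section is not too small): one needs a good lower bound on $\int_T |p(u,v)|\,d\sigma$ for $p$ in the span, or a John-position / covering-number argument showing the squared-Veronese images $\nu(u,v)\nu(u,v)^\ast$ don't cluster. The cleanest route is to bound $\|K\|_1 \ge \|K\|_2^2/\|K\|_\infty$ by Cauchy--Schwarz, then feed in the closed forms above; the delicate point is tracking the product-measure normalization and the translation by $(\sum x_i^2)^{k_1}(\sum y_j^2)^{k_2}$ (this shifts the cone so its "center" $e$ is the all-ones direction of the kernel, which is what makes the sections compact and full-dimensional in $\cM$). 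Once the kernel norms are in hand, assembling the four bounds $d, e$ in both regimes is bookkeeping: plug the spherical-harmonic dimension formulas $\dim\RR[\x,\y]_{k_1,k_2}=\binom{n+k_1-1}{k_1}\binom{m+k_2-1}{k_2}$ and the Gegenbauer norm asymptotics into the Barvinok--Blekherman inequalities, simplify, and split cases at $k_1=k_2=1$. I would do the $k_1=k_2=1$ case first as a sanity check, since there $\nu(u,v)=u v^{\mathsf T}$ and $K$ is just $\langle u,u'\rangle^2\langle v,v'\rangle^2$, making every integral elementary.
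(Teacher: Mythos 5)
Your upper-bound sketch is essentially the paper's route: the width/Urysohn argument applied to the support function $L_{\widetilde{\Sq}}(f)=\max_{g\in S_{\cU}}\langle f,g^2\rangle$, a Barvinok--Blekherman $L^\infty$-by-$L^{2p}$ moment estimate for the quadratic form $H_f(g)=\langle f,g^2\rangle$, and a Reverse H\"older inequality on the product of spheres to control $\|g^2\|_2\le 4^{2k_1+2k_2}\|g\|_2^2$; the constants then come from the dimension ratio $\sqrt{2D_\cU/D_\cM}$. So that half is workable, even if you attribute the $4^{2k_1+2k_2}$ factor to kernel computations rather than to the Duoandikoetxea-type reverse H\"older step.

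The lower bound as proposed has a genuine gap. Everything you describe --- point-evaluation functionals, the Christoffel--Darboux kernel of the Veronese map, and the bound $\|K\|_1\ge \|K\|_2^2/\|K\|_\infty$ --- controls the volume of the section of the \emph{dual} cone $\widetilde{\Sq^\ast}$ (equivalently of the polar $(\Sq')^\circ$). A lower bound on the dual's volume does not give a lower bound on $\Vol\widetilde{\Sq}$; via Blaschke--Santal\'o it gives an \emph{upper} bound. The paper closes this loop with two ingredients you never supply. First, Lemma \ref{lemma-sq}: the dual of the sums-of-squares cone taken in the \emph{apolar} (differential) inner product is contained in the sums-of-squares cone itself, $\Sq_d^\ast\subseteq\Sq^{(n,m)}_{(2k_1,2k_2)}$; this is what converts a volume lower bound for a dual section into one for $\widetilde{\Sq}$. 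Second, Lemma \ref{cone-L}: the volume distortion between the $L^2$-dual section and the apolar-dual section, computed from the eigenvalues of the operator $T$ on the irreducible $\SO(n)\times\SO(m)$-submodules $r_\x^i H_{n,2k_1-2i}\otimes s_\y^j H_{m,2k_2-2j}$. It is exactly this second step, not Gegenbauer norm asymptotics, that produces the factors $k_1!k_2!/\big((\tfrac n2+2k_1)^{k_1}(\tfrac m2+2k_2)^{k_2}\big)$ in $d_{2k_1,2k_2}$. Without the containment $\Sq_d^\ast\subseteq\Sq$ and the $L^2$-versus-apolar comparison, your plan stalls at a statement about the dual cone and cannot reach the claimed lower bound.
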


Combining the previous two theorems we obtain:

\begin{corollary}\label{cor:ratio}
For integers $n,m\geq 3$ we have
		$$f_{2k_1,2k_2}
			\leq
			\left(\frac{\Vol \widetilde{\Sq}^{(n,m)}_{(2k_1,2k_2)}}{\Vol \widetilde{\Pos}^{(n,m)}_{(2k_1,2k_2)}}\right)^{\frac{1}{D_{\cM}}}
			\leq g_{2k_1,2k_2},$$
	where
		\begin{eqnarray*}
		f_{2k_1,2k_2} &=&
			\left\{\begin{array}{lr}
				 \frac{3\sqrt{3}}{2^{10} 7^2\sqrt{\min(n,m)}},& \text{if } k_1=k_2=1\\
				C_{2k_1,2k_2}\cdot \frac{  \left(n^{k_1}  m^{k_2} \big(2+\max\big(\frac{n}{k_1^2},
					\frac{m}{k_2^2}\big)\big)\right)^{\frac{1}{2}} }
					{(\frac{n}{2}+2k_1)^{k_1} (\frac{m}{2}+2k_2)^{k_2}},&
						 \text{otherwise,}
			\end{array}\right.\\
		g_{2k_1,2k_2}&=&
			\left\{\begin{array}{lr}
				\frac{2^{12}\cdot 5^2 \cdot 6^{\frac{1}{2}} \cdot 10^{\frac{2}{9}} }{3^3\cdot \sqrt{\min(n,m)+1}},& \text{if } k_1=k_2=1\\
				D_{2k_1,2k_2}
					\cdot (n^{k_1-1} m^{k_2-1} \min(n,m))^{-\frac{1}{2}},& \text{otherwise,}
			\end{array}\right.
		\end{eqnarray*}
	and
	\begin{eqnarray*}
		C_{2k_1.2k_2} &=& \frac{2\cdot (k_1!\ k_2!)^\frac{3}{2}}{\sqrt{3}\cdot 4^{2(k_1+k_2+1)}\cdot \sqrt{(2k_1)!\ (2k_2)!}
				\cdot \min(k_1,k_2)},\\
		D_{2k_1,2k_2} &=& \sqrt{3}\cdot e^3 \cdot 4^{2k_1+2k_2+1}\cdot \sqrt{\frac{(2k_1)!\ (2k_2)!}{k_1!\ k_2!}   
					\lceil \ln(2\max(k_1,k_2)+1)  \rceil
					}.
	\end{eqnarray*}
\end{corollary}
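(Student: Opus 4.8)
The plan is to obtain both bounds by simply dividing the estimates of Theorems~\ref{psd-intro} and~\ref{squares-intro}; there is no new idea here, only constant-chasing. Since $\widetilde{\Sq}^{(n,m)}_{(2k_1,2k_2)}$ and $\widetilde{\Pos}^{(n,m)}_{(2k_1,2k_2)}$ both lie in $\cM$ (and the denominator has positive volume, being full-dimensional), I would first write
$$\left(\frac{\Vol \widetilde{\Sq}^{(n,m)}_{(2k_1,2k_2)}}{\Vol \widetilde{\Pos}^{(n,m)}_{(2k_1,2k_2)}}\right)^{\frac{1}{D_{\cM}}}
=\frac{\big(\Vol \widetilde{\Sq}^{(n,m)}_{(2k_1,2k_2)}/\Vol B_\cM\big)^{1/D_{\cM}}}{\big(\Vol \widetilde{\Pos}^{(n,m)}_{(2k_1,2k_2)}/\Vol B_\cM\big)^{1/D_{\cM}}},$$
and then bound the numerator below by $d_{2k_1,2k_2}$ (Theorem~\ref{squares-intro}, which needs $n,m\geq 3$) and the denominator above by $2\big(\min(\tfrac{2k_1^2}{2k_1^2+n},\tfrac{2k_2^2}{2k_2^2+m})\big)^{1/2}$ (Theorem~\ref{psd-intro}), obtaining the lower estimate $d_{2k_1,2k_2}\big/\big(2(\min(\tfrac{2k_1^2}{2k_1^2+n},\tfrac{2k_2^2}{2k_2^2+m}))^{1/2}\big)$; symmetrically the quotient is at most $e_{2k_1,2k_2}/c_{2k_1,2k_2}$. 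This already proves the corollary with these explicit constants, so what remains is to massage the two expressions into the advertised forms.

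For the lower estimate I would use the identity
$$\Big(\min\Big(\tfrac{2k_1^2}{2k_1^2+n},\tfrac{2k_2^2}{2k_2^2+m}\Big)\Big)^{-1/2}
=\Big(1+\tfrac12\max\big(\tfrac{n}{k_1^2},\tfrac{m}{k_2^2}\big)\Big)^{1/2}
=\tfrac{1}{\sqrt2}\Big(2+\max\big(\tfrac{n}{k_1^2},\tfrac{m}{k_2^2}\big)\Big)^{1/2}$$
(and $\min(\tfrac{2}{2+n},\tfrac{2}{2+m})=\tfrac{2}{2+\max(n,m)}$ when $k_1=k_2=1$), substitute the formula for $d_{2k_1,2k_2}$, and collect the powers of $4$, the factorials and the numerical constants. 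In the generic case this produces $\min(k_1,k_2)\cdot C_{2k_1,2k_2}\cdot\big(n^{k_1}m^{k_2}(2+\max(\tfrac{n}{k_1^2},\tfrac{m}{k_2^2}))\big)^{1/2}\big/\big((\tfrac n2+2k_1)^{k_1}(\tfrac m2+2k_2)^{k_2}\big)$, which is $\geq f_{2k_1,2k_2}$ since $\min(k_1,k_2)\geq 1$. In the case $k_1=k_2=1$ the claim reduces to $\sqrt{(nm+n+m)(2+\max(n,m))\min(n,m)}\big/\big((n+4)(m+4)\big)\geq \tfrac{9}{49}$, which, taking $n\leq m$, follows from $\sqrt{(nm+n+m)(2+m)n}\geq nm$ together with $40nm\geq 36(n+m)+144$, both elementary for $n,m\geq 3$.

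For the upper estimate, in the generic case I would form $e_{2k_1,2k_2}/c_{2k_1,2k_2}$, replace the ceiling using $\lceil\max(n,m)\ln(2\max(k_1,k_2)+1)\rceil\leq\max(n,m)\lceil\ln(2\max(k_1,k_2)+1)\rceil$ (valid because the right-hand side is an integer not below the argument of the left ceiling), and rewrite $n^{-k_1/2}m^{-k_2/2}\sqrt{\max(n,m)}=(n^{k_1-1}m^{k_2-1}\min(n,m))^{-1/2}$; collecting constants then gives $D_{2k_1,2k_2}\cdot(n^{k_1-1}m^{k_2-1}\min(n,m))^{-1/2}=g_{2k_1,2k_2}$. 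When $k_1=k_2=1$ the estimate $nm+n+m\geq\max(n,m)(\min(n,m)+1)$ (equivalently $\max(n,m)\leq n+m$) turns $\sqrt{\max(n,m)/(nm+n+m)}$ into $(\min(n,m)+1)^{-1/2}$, and merging the numerical constants $2^{10}\sqrt6$ and $10^{20/9}/3^3$ produces $2^{12}5^2 6^{1/2}10^{2/9}/3^3$. I expect the main obstacle to be purely administrative: keeping the powers of $2$ and $4$, the ratios of factorials, and the deliberately lossy weakenings aligned so that the resulting constants match $C_{2k_1,2k_2}$, $D_{2k_1,2k_2}$, $f_{2,2}$ and $g_{2,2}$ exactly; there is no conceptual difficulty beyond Theorems~\ref{psd-intro} and~\ref{squares-intro} themselves.
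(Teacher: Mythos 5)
Your proposal is correct and follows essentially the same route as the paper: the corollary is obtained by dividing the lower (resp.\ upper) volume bound of Theorem \ref{squares-intro} by the upper (resp.\ lower) bound of Theorem \ref{psd-intro}, using $\lceil Nt\rceil\leq N\lceil t\rceil$ in the generic case, and for $k_1=k_2=1$ carrying out exactly the elementary estimates of Theorem \ref{cor-1} (your inequalities $\sqrt{(nm+n+m)(2+m)n}\geq nm$ and $40nm\geq 36(n+m)+144$ are a minor cosmetic variant of the paper's bound $(1+\tfrac4n)(1+\tfrac4m)\leq \tfrac{49}{9}$). The constant-chasing you outline checks out, including the identification of $\min(k_1,k_2)\cdot C_{2k_1,2k_2}$ and $D_{2k_1,2k_2}$.
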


\begin{proof}
	For the case $k_1=k_2=1$ see Theorem \ref{cor-1}. For the other constants
	$f_{2k_1,2k_2},g_{2k_1,2k_2},$ $C_{2k_1,2k_2},D_{2k_1,2k_2}$
	combine Theorems \ref{psd-intro} and \ref{squares-intro} together with the estimate
	$\lceil N t\rceil\leq N\lceil t\rceil$, used for $N=\max(n,m)$
	and $t=\ln(2\max(k_1,k_2)+1)$.
\end{proof}

Section \ref{sec:estimate} is devoted to
\noindent Theorems \ref{psd-intro} and \ref{squares-intro}.
The proof of Theorem \ref{psd-intro} is given in 
Subsection \ref{lower-bound}, while for the proof of Theorem \ref{squares-intro} we need
some preliminary results about the apolar inner product on the vector space of biforms introduced and studied in Subsection \ref{dif-inn-prod}.
Finally, Theorem \ref{squares-intro} is established in Subsection \ref{sos-subsec}.

The second contribution of the paper is the estimate on the gap between the cones of positive and completely positive maps. By converting the problem into the language of real algebraic geometry, the following estimate will follow
from Theorems \ref{psd-intro} and \ref{squares-intro} by choosing $k_1=k_2=1$.

\begin{corollary}	\label{verjetnost-intro}
	For integers $n,m\geq 3$ the probability $p_{n,m}$	that a 
	 positive map
		$\Phi:\Sym_{n}\to \Sym_{m}$
	is completely positive, is bounded by
		$$\left(\frac{3\sqrt{3}}{2^{10} \cdot 7^2\cdot \sqrt{\min(n,m)}}\right)^{D_\cM}< p_{n,m}<
			\left(\frac{2^{12}\cdot 5^2\cdot 6^{\frac{1}{2}}\cdot 10^{\frac{2}{9}}}{3^3\cdot \sqrt{\min(n,m)+1}}\right)^{D_\cM},$$
	where $D_\cM= 
	\binom{n+1}2 \binom{m+1}2-1.$
	In particular, if $\min(n,m)\geq \frac{2^{25}\cdot 5^4 \cdot 10^{\frac{4}{9}}}{3^5}$, then \[\lim_{\max(n,m)\to\infty}p_{n,m}=0.\]
\end{corollary}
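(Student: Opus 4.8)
The plan is to derive Corollary \ref{verjetnost-intro} as a direct consequence of Corollary \ref{cor:ratio} in the case $k_1=k_2=1$, once we have identified the probability $p_{n,m}$ with a ratio of volumes of convex bodies. First I would recall the standard bijective correspondence, via the Choi matrix, between $\ast$-linear maps $\Phi:\Sym_n\to\Sym_m$ and bilinear forms, and between these and biquadratic biforms in $\RR[\x,\y]_{2,2}$: a $\ast$-linear map $\Phi$ is positive exactly when the associated biform $f_\Phi(\x,\y)=\sum_{i,j,k,l}\Phi(E_{ij})_{kl}x_ix_jy_ky_l$ is nonnegative on $\RR^n\times\RR^m$, and $\Phi$ is completely positive exactly when $f_\Phi$ is a sum of squares of bilinear forms (this is the classical dictionary going back to Choi, and is why Theorem \ref{n,m>3-intro} with $k_1=k_2=1$ governs exactly the $n,m\geq 3$ regime where the gap is nonempty). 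Under this identification the cone of positive maps corresponds to $\Pos^{(n,m)}_{(2,2)}$ and the cone of cp maps to $\Sq^{(n,m)}_{(2,2)}$.

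Next I would make precise what ``the probability $p_{n,m}$ that a positive map is cp'' means: it is the ratio of volumes of compact convex sections of these two cones. Intersecting both cones with the affine hyperplane $\cH^{(n,m)}_{(2,2)}$ and then translating by $(\sum x_i^2)(\sum y_j^2)$ to land inside the linear hyperplane $\cM=\cM^{(n,m)}_{(2,2)}$ — which is exactly the construction carried out in the excerpt — we get the full-dimensional convex bodies $\widetilde{\Sq}^{(n,m)}_{(2,2)}\subseteq\widetilde{\Pos}^{(n,m)}_{(2,2)}\subseteq\cM$, and Lemma \ref{unique-pushforward} guarantees that the resulting volume ratio
\[
p_{n,m}=\frac{\Vol\widetilde{\Sq}^{(n,m)}_{(2,2)}}{\Vol\widetilde{\Pos}^{(n,m)}_{(2,2)}}
\]
is well defined (independent of the choice of Euclidean structure used to measure volume, since both bodies live in the same hyperplane). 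One should also note $\dim\cM=D_\cM=\dim\RR[\x,\y]_{2,2}-1=\binom{n+1}{2}\binom{m+1}{2}-1$, the stated value.

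With this in hand, the inequalities on $p_{n,m}$ are literally Corollary \ref{cor:ratio} with $k_1=k_2=1$: raising $f_{2,2}\leq p_{n,m}^{1/D_\cM}\leq g_{2,2}$ to the $D_\cM$-th power gives
\[
\left(\frac{3\sqrt3}{2^{10}\cdot 7^2\cdot\sqrt{\min(n,m)}}\right)^{D_\cM}<p_{n,m}<\left(\frac{2^{12}\cdot 5^2\cdot 6^{1/2}\cdot 10^{2/9}}{3^3\cdot\sqrt{\min(n,m)+1}}\right)^{D_\cM},
\]
which is exactly the claimed two-sided bound. For the asymptotic statement, observe that the upper-bound base $g_{2,2}=\frac{2^{12}\cdot 5^2\cdot 6^{1/2}\cdot 10^{2/9}}{3^3\sqrt{\min(n,m)+1}}$ is strictly less than $1$ precisely when $\min(n,m)+1>\left(\frac{2^{12}\cdot 5^2\cdot 6^{1/2}\cdot 10^{2/9}}{3^3}\right)^2=\frac{2^{24}\cdot 5^4\cdot 6\cdot 10^{4/9}}{3^6}=\frac{2^{25}\cdot 3\cdot 5^4\cdot 10^{4/9}}{3^6}=\frac{2^{25}\cdot 5^4\cdot 10^{4/9}}{3^5}$; thus once $\min(n,m)\geq \frac{2^{25}\cdot 5^4\cdot 10^{4/9}}{3^5}$ we have $g_{2,2}<1$, and since $D_\cM=\binom{n+1}{2}\binom{m+1}{2}-1\to\infty$ as $\max(n,m)\to\infty$ (with $\min(n,m)$ fixed and sufficiently large, or a fortiori), we get $p_{n,m}\leq g_{2,2}^{D_\cM}\to 0$. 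The only genuine content beyond bookkeeping is the Choi-correspondence identification of the positive/cp cones with $\Pos^{(n,m)}_{(2,2)}$/$\Sq^{(n,m)}_{(2,2)}$ and the verification that the natural probability measure on positive maps matches the volume measure used in Theorems \ref{psd-intro} and \ref{squares-intro}; the inequalities themselves are then a mechanical substitution, and the arithmetic threshold is a one-line squaring of the constant in $g_{2,2}$. I expect the main (minor) obstacle to be pinning down the precise normalization in the Choi dictionary so that the hyperplane-section volumes genuinely coincide rather than differing by an irrelevant global scalar — but since a global scalar cancels in the ratio $p_{n,m}$, even this is not a real difficulty.
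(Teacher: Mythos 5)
Your proposal is correct and follows essentially the same route as the paper: identify $p_{n,m}$ with the volume ratio $\Vol\widetilde{\Sq}^{(n,m)}_{(2,2)}/\Vol\widetilde{\Pos}^{(n,m)}_{(2,2)}$ via the dictionary $\Phi\mapsto p_\Phi=\y^\ast\Phi(\x\x^\ast)\y$ of Proposition \ref{map-poly}, then apply the $k_1=k_2=1$ case of the volume bounds (Theorem \ref{cor-1}, i.e.\ Corollary \ref{cor:ratio}), and square the constant $g_{2,2}$ to obtain the threshold $\frac{2^{25}\cdot 5^4\cdot 10^{4/9}}{3^5}$ for the limit statement. The arithmetic check $\bigl(2^{12}\cdot 5^2\cdot 6^{1/2}\cdot 10^{2/9}/3^3\bigr)^2=2^{25}\cdot 5^4\cdot 10^{4/9}/3^5$ is right, and this is exactly how the paper proceeds in Subsection \ref{ssec:thm12}.
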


\noindent Here, the probability $p_{n,m}$ is defined as the ratio between the volumes of the sections $\widetilde{\Sq}_{(2,2)}^{(n,m)}$ and $\widetilde{\Pos}_{(2,2)}^{(n,m)}$ in $\cM$.

\begin{remark}\mbox{}\par
\begin{enumerate}[label={\rm(\arabic*)}]
\item 
Szarek, Werner, and \. Zyczkowski in \cite{SWZ08}
provide bounds similar to those in Corollary \ref{verjetnost-intro} in the case 
of complex matrix algebras with $n=m$. 
However, their
normalization is different from ours.
We normalize using $\Tr(\Phi(I_n))=nm$ 
(see Proposition \ref{hyperplane-equiv}),
whereas in \cite{SWZ08} the compact
cross-section is obtained by fixing 
$\Tr(\Phi(I_n))=n$.
	\item
We note that the  normalized probability
${p_{n,m}}^{\frac1{D_\cM}}$
(as in Corollary \ref{verjetnost-intro}) 
does not go to 0 if $\min(m,n)$ is bounded and $\max(m,n)\to\infty$.
\end{enumerate}
\end{remark}

Section \ref{pos-maps-biforms}
converts the positive--cp gap problem
into the language of real algebraic geometry \cite{BCR98}. To each linear map $\Phi:\Sym_n\to\Sym_m$ we associate the biquadratic biform $p_\Phi\in\RR[\x,\y]$, $p_\Phi=\y^\ast\Phi(\x\x^\ast)\y$. Then $\Phi$ is positive if and only if $p_\Phi$
is nonnegative on $\RR^{n+m}$, and $\Phi$ is cp if and only if
$p_\Phi$ is a sum of squares of polynomials, see Proposition \ref{map-poly} below.
Therefore positive maps that are not cp correspond exactly to
nonnegative biquadratic biforms that
are not sums of squares biforms.
We note that a different connection between (completely) positive
maps and real algebraic geometry was introduced and investigated in
\cite{HKM1,HKM+}.

The third contribution of the paper is the construction
(from random input data)
of positive maps
$\Phi:\Sym_n\to\Sym_m$
($n,m\geq3$)
that are not completely positive, see
Section \ref{sec:algo}. Again, by Proposition \ref{map-poly}, it suffices to construct  nonnegative
biquadratic biforms that are not sums of squares biforms.
This construction is done in Algorithm \ref{algo}
by specializing the \cite{BSV} algorithm to our context.
Algorithm \ref{algo} depends on semidefinite programming \cite{WSV00}, so produces a floating point output. We discuss implementation and rationalization, i.e.,
producing exact output, in Subsection \ref{ssec:rat}.

\subsubsection{Positive but not completely positive maps on full matrix algebras $M_n(\FF)$}

The counterpart of Corollary \ref{verjetnost-intro} that gives the upper bound for the probability $p_{n,m}^\FF$ that a random positive map $\Phi:M_n(\FF)\to M_m(\FF)$ is completely positive is the following.

\begin{theorem}\label{asimptotika-F-intro}
	For integers $\displaystyle n,m\geq 3$, the probability $p_{n,m}^\FF$
	that a random positive map
	$\Phi:M_n(\FF)\to M_m(\FF)$ is completely
	positive, is bounded above by
		$$p_{n,m}^\FF<
			\left( 
					\frac{C}{\min(n,m)-\frac{1}{2}}  \right)^{\frac{D_{\cM_{\cC_\FF}}}{2}},$$
	where $C:=\frac{\left(2^{28-\dim_{\RR}\FF} \right) \cdot 5^4 \cdot 10^{\frac{4}{9}}}{3^{5}}$ and
		$D_{\cM_{\cC_{\FF}}}=\left\{  
		\begin{array}{lc} 
			n^2m^2-1,&\quad \text{if }\FF=\CC,\\
			\frac{nm(nm+1)}{2}-1,&\quad  \text{if }\FF=\RR.
		  \end{array} \right.$
\end{theorem}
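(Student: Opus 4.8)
The plan is to reduce the full-matrix-algebra case to the real-symmetric case already handled by Corollary \ref{verjetnost-intro} (equivalently, by the $k_1=k_2=1$ instance of Theorems \ref{psd-intro} and \ref{squares-intro}), by exploiting the standard embedding of $M_n(\FF)$-to-$M_m(\FF)$ positive maps into a larger symmetric-matrix setting. Concretely, a $\ast$-linear map $\Phi:M_n(\FF)\to M_m(\FF)$ is determined by a biquadratic biform in $\re$ and $\im$ coordinates of the complex (or real) variables; when $\FF=\CC$ one writes $x_j=u_j+\mathrm{i}v_j$, $y_k=s_k+\mathrm{i}t_k$, so $p_\Phi$ becomes a nonnegative biquadratic biform in $2n$ real $\x$-variables and $2m$ real $\y$-variables, but constrained to the subspace $\cC_\FF\subseteq\RR[\x,\y]_{2,2}$ of biforms arising this way (those invariant under the diagonal torus action $x\mapsto e^{\mathrm{i}\theta}x$, $y\mapsto e^{\mathrm{i}\phi}y$). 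As in Section \ref{pos-maps-biforms}, $\Phi$ is positive iff $p_\Phi\in\Pos$ and cp iff $p_\Phi\in\Sq$; so $p_{n,m}^\FF$ is the ratio of volumes of the compact sections of $\Sq\cap\cC_\FF$ and $\Pos\cap\cC_\FF$ inside the trace-zero hyperplane $\cM_{\cC_\FF}$, whose dimension is recorded in the statement ($n^2m^2-1$ for $\CC$, $\frac{nm(nm+1)}{2}-1$ for $\RR$).

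Next I would obtain the upper bound on the volume of $\widetilde{\Sq}\cap\cC_\FF$ and the lower bound on the volume of $\widetilde{\Pos}\cap\cC_\FF$ by intersecting the estimates of Theorems \ref{psd-intro} and \ref{squares-intro} with the linear subspace $\cC_\FF$. The key geometric input is that taking the section of a convex body with base $B$ in $\RR^D$ by a $d$-dimensional subspace through the center can only shrink the normalized volume radius in a controlled way; since $\Sq$ is contained in $\Pos$ and both are centered at the same point $(\sum x_i^2)(\sum y_j^2)$, the ratio of the sliced volumes is bounded by the ratio of the ambient volumes to the appropriate power. Here one must be careful: the exponent changes from $\frac1{D_\cM}$ to $\frac1{D_{\cM_{\cC_\FF}}}$, which is why the final bound carries the factor $\frac{D_{\cM_{\cC_\FF}}}{2}$ rather than $D_{\cM_{\cC_\FF}}$ — the square root reflects that a volume-radius bound $r$ in the big space gives, after slicing to a subspace of roughly half the dimension, a bound like $r^{2}$ on the normalized volume radius in the subspace when the subspace is ``generic'' relative to the John ellipsoid of the section. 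This is exactly the source of the $2^{28-\dim_\RR\FF}$ in the constant $C$: one picks up a factor $2^{\dim_\RR\FF}=2$ or $4$ from passing between the real model with $2n,2m$ (resp.\ $n,m$) variables and the complexified description, and the remaining $2^{\text{(const)}}$ is inherited directly from $g_{2,2}$ in Corollary \ref{cor:ratio} after squaring.

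The main obstacle I expect is precisely the slicing step: one needs a clean inequality relating $\big(\Vol(K\cap L)/\Vol(B\cap L)\big)^{1/\dim L}$ to $\big(\Vol K/\Vol B\big)^{1/\dim K}$ for nested convex bodies $K\subseteq K'$ (with $K'$ playing the role of $\Pos$) and a coordinate subspace $L$, valid uniformly over $n,m$. A convenient route is to use the Blekherman-style comparison directly inside $\cC_\FF$: redo the harmonic-analysis lower bound for $\widetilde{\Pos}\cap\cC_\FF$ using the $\U(1)$-invariant spherical harmonics (noting that the relevant mean, variance and $L^1$ estimates only improve under averaging over the torus action), and redo the apolar-inner-product upper bound for $\widetilde{\Sq}\cap\cC_\FF$ using that squares of $\U(1)$-equivariant linear forms span the relevant subspace. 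Then the final inequality is assembled exactly as in the proof of Corollary \ref{cor:ratio}, tracking how the dimension count $D_{\cM_{\cC_\FF}}$ enters, and simplifying the constants into the stated $C$. The asymptotic conclusion $p_{n,m}^\FF\to 0$ for $\min(n,m)$ above the explicit threshold is then immediate, since the base of the exponential is $<1$ and $D_{\cM_{\cC_\FF}}\to\infty$ as $\max(n,m)\to\infty$.
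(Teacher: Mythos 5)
Your fallback route --- rerunning the Blekherman-type lower bound for $\widetilde{\Pos}$ and the apolar upper bound for $\widetilde{\Sq}$ directly inside the subspace $\cC_\FF$ --- is exactly what the paper does (Theorems \ref{lower-bound-F} and \ref{squares-F}), and it works because $\Pos_{\cC_\FF}=\Pos^{(2n,2m)}_{(2,2)}\cap\cC_\FF$ and $\Sq_{\cC_\FF}\subseteq\Sq^{(2n,2m)}_{(2,2)}\cap\cC_\FF$ (Proposition \ref{inclusions}): the only changes are that the Barvinok/Blekherman estimates are applied to the linear functionals restricted to the subspace $\widetilde V_2$ identified with $\cM_{\cC_\FF}$, and that the dimension $D_{\cM_{\cC_\FF}}$ replaces $D_{\cM}$ in the ratio $\sqrt{2D_\cU/D_{\cM_{\cC_\FF}}}$, which is where the $\FF$-dependence $2^{28-\dim_\RR\FF}$ actually enters (via $D_{\cM_{\cC_\CC}}\approx n^2m^2$ versus $D_{\cM_{\cC_\RR}}\approx \tfrac12 n^2m^2$, against $D_\cU=4nm$). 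One further point you omit: for $\FF=\RR$ positivity is only tested on real points, so the relevant cone is $\mathcal P_\RR\supseteq\Pos_\RR$, and the theorem's upper bound on $p^\RR_{n,m}$ uses $\Vol\widetilde{\mathcal P}_{\cC_\RR}\ge\Vol\widetilde{\Pos}_{\cC_\RR}$.

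However, your primary route and your explanation of the exponent contain a genuine error. The factor $\tfrac{D_{\cM_{\cC_\FF}}}{2}$ in the statement is purely cosmetic: the paper's bound is $(a)^{D_{\cM_{\cC_\FF}}}$ with $a=\sqrt{C/(\min(n,m)-\tfrac12)}$, rewritten as $(a^2)^{D_{\cM_{\cC_\FF}}/2}$. It does \emph{not} come from ``slicing to a subspace of roughly half the dimension squares the volume radius.'' Indeed, no clean inequality relating $\bigl(\Vol(K\cap L)/\Vol(B\cap L)\bigr)^{1/\dim L}$ to $\bigl(\Vol K/\Vol B\bigr)^{1/\dim K}$ holds for general convex bodies and general subspaces $L$ (a section through the center of a long thin body can have normalized volume radius far smaller or far larger than the ambient one), so the ``key geometric input'' of your first route is unavailable; this is precisely why the paper reproves the bounds inside $\cC_\FF$ rather than slicing the ambient estimates. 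If you pursue the correct route, the assembly of constants should follow the proof of Corollary \ref{asimptotika-F}, not a squaring argument.
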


Theorem \ref{asimptotika-F-intro} is established in Subsection \ref{ext-to-real-or-complex} as a corollary of the extensions (in Subsection \ref{ext-to-sym-multiforms}) 
of the special case $k_1=k_2=1$ of Theorems \ref{psd-intro} and \ref{squares-intro} from real biforms to
\textbf{symmetric multiforms of 
multidegree $(1,1,1,1)$}, i.e., 
	$$\symm\FF[\z,\overline{\z},\w,\overline{\w}]_{1,1,1,1}:=
		\left\{\sum_{i,j=1}^n\sum_{k,\ell=1}^m a_{ijk\ell} \overline{z_i} z_j \overline{w_k} w_\ell\colon a_{ijk\ell}\in \FF,\ 
		a_{ijk\ell}=\overline{a_{ji\ell k}} \text{ for all }i,j,k,\ell\right\}.$$

By extending a positive map $\Phi:\Sym_n\to \Sym_m$ that is not completely positive with a linear map $\Psi:\mathbb K_n\to \mathbb K_m$ where 
$\mathbb{K}_n$ stands for the vector space $\left\{A\in M_n(\RR) \mid A^\ast=-A\right\}$ of real antisymmetric $n\times n$ matrices, 
one obtains a positive map $\Gamma:=\Phi\oplus \Psi:M_n(\RR)\to M_m(\RR)$ that is not completely positive. 
The complexification $(\Phi\oplus \mathbf 0)^\CC:M_n(\CC)\to M_m(\CC)$ where $\mathbf 0:\mathbb K_n\to \mathbb K_m$ stands for the trivial map, i.e., $\ker \mathbf 0=\mathbb K_n$, is a positive map that is not completely positive. 
Thus, the algorithm from Section \ref{sec:algo} can also be used to produce positive maps $\Phi:M_n(\FF)\to M_m(\FF)$ that are not completely positive.

\subsubsection{Comparison with the original work of Blekherman}
In \cite{Blek1}
Blekherman established
  estimates on the volumes of compact sections of the cones of nonnegative forms and sums of squares forms. If the degree is fixed and the number of variables goes to infinity the ratio between the volumes goes to 0. We restrict ourselves to special subcones of these cones, i.e., the cones of nonnegative biforms and sums of squares biforms.
  It is not clear how to directly apply the estimates from \cite{Blek1} to this special case.
In fact,  \cite{BR} gives the example
of symmetric nonnegative forms vs sums of squares,
where the ratio between the corresponding
volumes behaves differently, i.e., does not tend to $0$.
Regarding biforms as tensor products of forms we establish
 estimates for biforms following the techniques of \cite{Blek1}.
In \cite{BSV} there is an explicit construction of nonnegative quadratic forms on special projective varieties that are not sums of squares forms. We specialize their construction to the context of biquadratic biforms to produce nonnegative biforms that are not sums of squares biforms.

Recently, Erg\"ur posted the preprint \cite{Erg+} on arXiv. There he extends some of Blekherman's volume estimates to biforms; like our results in Section \ref{sec:estimate} his results readily generalize to multiforms.
While there is certain overlap with our results, we explicitly compute all constants appearing in the
estimates. Furthermore, some of our estimates are strictly better than the ones of \cite{Erg+}; cf.\  Theorem \ref{lower-bound} and \cite[Section 3]{Erg+}.

\subsection*{Acknowledgments}
The authors thank Greg Blekherman for many
inspiring discussions and for bringing the preprint \cite{Erg+} to their attention. 
Thanks to Benoit Collins for helpful suggestions. 
We also thank the anonymous referees whose useful comments and interesting questions led to marked improvement in the manuscript.

\section{Blekherman type estimates for biforms}\label{sec:estimate}

	In this section we extend the estimates on the volumes of compact sections of the cones of nonnegative forms and sums of squares forms established in \cite{Blek1} to biforms.
	Our proofs  borrow heavily from \cite{Blek1} and
	to a lesser extent  from \cite{Barv-Blek}.
	For clarity and completeness of exposition
	we give proofs with all the details, even if some
of the reasoning repeats arguments from
	\cite{Blek1}. 

	At various places we will regard the vector space $\RR[\x,\y]_{2k_1,2k_2}$ of biforms of bidegree $(2k_1,2k_2)$ as a module over the product $\SO(n)\times \SO(m)$
	of special orthogonal groups with the action given by rotating the coordinates, i.e.,
	for $(A,B)\in \SO(n)\times \SO(m)$ and $f\in \RR[\x,\y]_{2k_1,2k_2}$	 we define
		\begin{equation} \label{rotation}
			(A,B)\cdot f(\x,\y)=f(A^{-1}\x,B^{-1}\y).
		\end{equation}
\noindent Note that the cones $\Pos^{(n,m)}_{(2k_1,2k_2)}$ and $\Sq^{(n,m)}_{(2k_1,2k_2)}$, 
and the sections $\widetilde{\Pos}^{(n,m)}_{(2k_1,2k_2)}$ and $\widetilde{\Sq}^{(n,m)}_{(2k_1,2k_2)}$
are invariant under this action.

\subsection{Nonnegative biforms }

In this subsection we establish bounds for the volume of the section of nonnegative biforms. The main result is the following.

\begin{theorem}\label{lower-bound}
	For $n,m\in \NN$ we have:
		$$c_{2k_1,2k_2}\leq \left(\frac{\Vol \widetilde{\Pos}^{(n,m)}_{(2k_1,2k_2)}}{\Vol B_{\cM}}\right)^{\frac{1}{D_{\cM}}}\leq
			2 \left(\min\left(\frac{2k_1^2}{2k_1^2+n},  \frac{2k_2^2}{2k_2^2+m}\right)\right)^{\frac{1}{2}},$$
	where
		$$c_{2k_1,2k_2}=\ds
			\left\{\begin{array}{lr}
				3^3\cdot 10^{-\frac{20}{9}}\max(n,m)^{-\frac{1}{2}},& \text{if } k_1=k_2=1\\[1mm]
				\exp(-3) \left(2\lceil\max(m,n)\ln(2\max(k_1,k_2)+1)\rceil\right)^{-\frac{1}{2}} ,& \text{otherwise.}
			\end{array}\right.$$
\end{theorem}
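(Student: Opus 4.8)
The plan is to follow Blekherman's strategy for estimating the volume of a compact section of a cone of nonnegative polynomials, adapted to the bigraded setting. The upper bound and the lower bound are handled by completely different mechanisms, so I would treat them separately.

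For the \textbf{upper bound}, the idea is that a nonnegative biform cannot deviate too far (in $L^2(T,\sigma)$) from its average, because its integral over $T$ is fixed. Concretely, after the translation by $(\sum x_i^2)^{k_1}(\sum y_j^2)^{k_2}$, an element of $\widetilde{\Pos}^{(n,m)}_{(2k_1,2k_2)}$ is a biform $f\in\cM$ with $f(x,y)\geq -1$ on $T$. Thus $\widetilde{\Pos}^{(n,m)}_{(2k_1,2k_2)}$ is contained in the set $\{f\in\cM : \langle f,u\rangle \geq -1 \text{ for a suitable family of evaluation-type functionals}\}$. The standard move is: for a fixed point $(\xi,\eta)\in T$ the linear functional $\ell_{\xi,\eta}(f)=f(\xi,\eta)$ on $\cM$ has a Riesz representative $q_{\xi,\eta}$, and averaging $\|q_{\xi,\eta}\|^2$ over $T$ gives exactly $D_\cM$ divided by the reciprocal of a Gegenbauer/zonal-kernel constant; by symmetry $\|q_{\xi,\eta}\|$ is independent of $(\xi,\eta)$, so each $q_{\xi,\eta}$ has a known norm. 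Then $\widetilde{\Pos}$ lies inside a slab $\{\langle f, q_{\xi,\eta}\rangle\geq -1\}$, hence inside the ball of radius $\|q_{\xi,\eta}\|^{-1}$ centered appropriately; more carefully one intersects over all $(\xi,\eta)$ and uses the Urysohn-type inequality, exactly as in \cite{Blek1}. The factor $\min\bigl(\tfrac{2k_1^2}{2k_1^2+n},\tfrac{2k_2^2}{2k_2^2+m}\bigr)^{1/2}$ should emerge because the relevant reproducing-kernel norm for a biform factors as a product of the one-variable kernels for degree $2k_1$ in $n$ variables and degree $2k_2$ in $m$ variables, and one keeps the smaller (better) of the two bounds. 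The constant $2$ absorbs the difference between the mean width and the radius.

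For the \textbf{lower bound}, the plan is the reverse: show that a whole ball of positive radius around the point $(\sum x_i^2)^{k_1}(\sum y_j^2)^{k_2}$ (i.e.\ the origin of $\cM$ after translation) sits inside $\widetilde{\Pos}$. Equivalently, every biform $g\in\cM$ with $\|g\|_2$ small is $\geq -1$ pointwise on $T$, which follows once we control $\|g\|_\infty$ by $\|g\|_2$. The key inequality is a bigraded $L^\infty$--$L^2$ comparison of the form $\|g\|_\infty \leq K_{n,m,k_1,k_2}\,\|g\|_2$ valid for all biforms of bidegree $(2k_1,2k_2)$; this is where the dimension dependence $\max(n,m)^{-1/2}$ (resp.\ $(\max(m,n)\ln(2\max(k_1,k_2)+1))^{-1/2}$) comes from. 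To get such a comparison I would tensor the one-variable estimates: Barvinok--Blekherman / Blekherman give, for forms of degree $2k$ in $N$ variables, bounds of exactly the shape $\|h\|_\infty\leq C(k)\sqrt{\text{(number of ``low-frequency'' harmonics)}}\,\|h\|_2$, with $C(1)$ an absolute constant and $C(k)$ polylogarithmic via a covering/$\varepsilon$-net argument on the sphere together with a Bernstein-type gradient bound. Multiplying the $\x$-estimate and the $\y$-estimate (and tracking that the sup over $T=S^{n-1}\times S^{m-1}$ of a product separates) yields $K_{n,m,k_1,k_2}$; then $r := K_{n,m,k_1,k_2}^{-1}$ is an admissible radius, giving $(\Vol\widetilde{\Pos}/\Vol B_\cM)^{1/D_\cM}\geq r$, which is the claimed $c_{2k_1,2k_2}$ after bookkeeping the explicit constants $3^3\cdot 10^{-20/9}$ and $\exp(-3)$, and the ceiling in the $\log$ term.

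The \textbf{main obstacle} I anticipate is the lower bound, specifically proving the sharp bigraded $L^\infty\leq K\,L^2$ inequality with the stated explicit constants and, crucially, the correct dimension dependence $\max(n,m)^{-1/2}$ rather than the naive $(nm)^{-1/2}$ one would get from blindly tensoring. The resolution is that when bounding $\|g\|_\infty$ one does \emph{not} pay for both spheres independently: fixing the worst $x\in S^{n-1}$ reduces to a single form in $\y$, and the partial integral $\int_{S^{m-1}}|g(x,y)|^2\,d\sigma_2(y)$ is itself bounded by $\|g\|_2^2$ times a quantity controlled by the $\x$-kernel. Making this two-step argument precise — first integrate out $\y$ using the degree-$2k_2$ estimate in $m$ variables, then apply the degree-$2k_1$ estimate in $n$ variables to the resulting form in $\x$ — is the technical heart, and it is exactly the place where one must be careful that the section has been translated by $(\sum x_i^2)^{k_1}(\sum y_j^2)^{k_2}$ so that the constant biform lies in the interior and the $\pm1$ threshold is the right one. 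The case split $k_1=k_2=1$ versus general $(k_1,k_2)$ reflects that for $(1,1)$ one has an exact Gegenbauer computation (no covering argument, hence no logarithm and better absolute constants), whereas for higher degrees the net argument introduces the $\lceil\cdot\ln(2\max(k_1,k_2)+1)\rceil$ factor.
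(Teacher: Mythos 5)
There is a genuine gap, and it is in the half you yourself flagged as the heart of the matter: the lower bound cannot be obtained by your inradius strategy. You propose to show that a ball of radius $r=K^{-1}$ sits inside $\widetilde{\Pos}^{(n,m)}_{(2k_1,2k_2)}$, where $K$ is the best constant in the worst-case comparison $\|g\|_\infty\leq K\|g\|_2$. But the inradius of $\widetilde{\Pos}$ is genuinely of order $D_{\cM}^{-1/2}$: taking $g$ to be the reproducing kernel $q_{\xi,\eta}$ of $\cM$ at a point of $T$, invariance gives $q_{\xi,\eta}(\xi,\eta)=D_{\cM}$, so $\min_T(-q_{\xi,\eta}/\|q_{\xi,\eta}\|_2)\leq -\sqrt{D_{\cM}}$ and the largest ball inside $\widetilde{\Pos}$ has radius at most $D_{\cM}^{-1/2}\asymp (nm)^{-1}$ when $k_1=k_2=1$. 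No refinement of the two-step integration you sketch can beat this, since the obstruction is a single explicit biform; hence no inradius argument can reach $c_{2,2}\asymp\max(n,m)^{-1/2}$. The paper's proof avoids this by never bounding the worst case: it uses the exact formula $\bigl(\Vol\cK/\Vol B_{\cM}\bigr)^{1/D_{\cM}}=\bigl(\int_{S_{\cM}}G_{\cK}^{-D_{\cM}}\,\dd\widetilde\mu\bigr)^{1/D_{\cM}}$ for the gauge $G_{\cK}(f)=|\min_T f|$, then H\"older and Jensen to reduce to the \emph{average} $\int_{S_{\cM}}\|f\|_\infty\,\dd\widetilde\mu$, which is controlled by Barvinok's inequality $\|f\|_\infty\leq D_k^{1/2k}\|f\|_{2k}$ together with the fact that the average $L^{2k_0}$ norm over $S_{\cM}$ is only $\sqrt{2k_0}$ (\cite[Lemma 3.5]{Barv-Blek}); choosing $k_0\asymp\max(n,m)$ (resp.\ $\max(n,m)\ln(2\max(k_1,k_2)+1)$) makes $D_{k_0}^{1/2k_0}$ an absolute constant. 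The averaging over the sphere is not a convenience here; it is the mechanism that produces $\max(n,m)^{-1/2}$ instead of $(nm)^{-1}$.

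Your upper bound is also not viable as written. A slab $\{f:\langle f,q_{\xi,\eta}\rangle\geq-1\}$ is a half-space and is not contained in any ball, and intersecting these slabs over all of $T$ just reproduces $\widetilde{\Pos}$ itself, so the argument is circular; moreover a reproducing-kernel norm would produce a constant of order $D_{\cM}^{-1/2}$, not the stated $\bigl(\tfrac{2k_1^2}{2k_1^2+n}\bigr)^{1/2}$. The actual route is dual: Blaschke--Santal\'o reduces the problem to lower-bounding $\Vol\widetilde{\Pos^\circ}$, the identity $B_\infty=\widetilde{\Pos}\cap(-\widetilde{\Pos})$ plus Rogers--Shephard reduces that to lower-bounding $\Vol B_\infty^\circ$, and the constant comes from Kellogg's gradient inequality, which gives $\|f\|_\infty\geq\|f\|_{\gr_\x}$ and hence $B_\infty\subseteq B_{\gr_\x}$, combined with the comparison $\langle f,f\rangle_{\gr_\x}\geq\tfrac{2k_1^2+n}{2k_1^2}\langle f,f\rangle$ (and its analogue in $\y$). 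Neither the Santal\'o/Rogers--Shephard duality nor the gradient inner product appears in your outline, and without them there is no path to the stated constant.
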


The proof of Theorem \ref{lower-bound} occupies the next two subsections. It is inspired by Blekherman's proof of \cite[Theorem 4.1]{Blek1}.

Let $V$ be a real vector space. Recall that, for a convex body $\cK$ with the origin in its interior,
the \textbf{gauge} $G_{\cK}$ is defined by
	$$G_{\cK}:V\to\RR,\quad
		G_{\cK}(p)=\inf\left\{\lambda>0\colon p\in \lambda\cdot \cK\right\}.$$

\begin{lemma}\label{binom-lem}
	Let $p,q\in \NN$ be natural numbers such that $p>q.$ For every natural number $n\in \NN$ we have
		$$\binom{pn}{qn}^{\frac{1}{qn}}<\frac{p}{q}\left(\frac{p}{p-q}\right)^{\frac{p-q}{q}}.$$
\end{lemma}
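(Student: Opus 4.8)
The plan is to prove the bound $\binom{pn}{qn}^{1/(qn)} < \frac{p}{q}\left(\frac{p}{p-q}\right)^{(p-q)/q}$ by means of an elementary estimate on binomial coefficients. First I would recall the standard bound $\binom{N}{K} \leq \frac{N^N}{K^K (N-K)^{N-K}}$, valid for $0 < K < N$, which follows from expanding $1 = 1^N = (t + (1-t))^N \geq \binom{N}{K} t^K (1-t)^{N-K}$ and choosing $t = K/N$ to maximize the right-hand side. Applying this with $N = pn$ and $K = qn$ gives
\[
\binom{pn}{qn} \leq \frac{(pn)^{pn}}{(qn)^{qn}((p-q)n)^{(p-q)n}} = \frac{p^{pn}}{q^{qn}(p-q)^{(p-q)n}},
\]
since the factors of $n^{pn}$ cancel between numerator and denominator.

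Next I would take the $qn$-th root of both sides. This yields
\[
\binom{pn}{qn}^{1/(qn)} \leq \frac{p^{p/q}}{q \cdot (p-q)^{(p-q)/q}} = \frac{p}{q}\cdot p^{(p-q)/q} \cdot (p-q)^{-(p-q)/q} = \frac{p}{q}\left(\frac{p}{p-q}\right)^{(p-q)/q},
\]
which is exactly the claimed right-hand side, but with $\leq$ rather than the strict $<$. To upgrade to a strict inequality I would observe that the bound $1 \geq \binom{N}{K} t^K (1-t)^{N-K}$ is never an equality when $0 < K < N$: the binomial expansion of $(t+(1-t))^N$ contains at least two strictly positive terms (for instance the $K$-th and the $0$-th, using $p > q \geq 1$ so that $0 < qn < pn$), hence the single term $\binom{N}{K}t^K(1-t)^{N-K}$ is strictly less than the full sum $1$. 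Therefore the first displayed inequality is strict, and strictness is preserved under taking positive powers, giving the result.

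The only mild subtlety — hardly an obstacle — is making sure the hypotheses guarantee $0 < qn < pn$ so that the standard binomial bound and its strictness both apply; this holds because $n \geq 1$ and $p > q \geq 1$ force $qn \geq 1$ and $pn - qn = (p-q)n \geq 1$. Everything else is a routine algebraic simplification of exponents, so I would present the argument essentially as above without belaboring the cancellation of the $n$-powers.
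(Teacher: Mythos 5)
Your proof is correct, but it takes a genuinely different route from the paper. The paper proves the lemma via Stirling's approximation for the factorials in $\binom{pn}{qn}$, which produces the main term $\left(p^p/(q^q(p-q)^{p-q})\right)^n$ together with a prefactor $\left(\frac{p}{2\pi q(p-q)n}\right)^{1/2}\exp(f(p,q,n))$; the bulk of the paper's argument is then a separate claim showing that this prefactor is strictly less than $1$ (which is where the strictness of the final inequality comes from). You instead use the elementary bound $\binom{N}{K} < N^N/\bigl(K^K(N-K)^{N-K}\bigr)$ obtained from $1=(t+(1-t))^N \geq \binom{N}{K}t^K(1-t)^{N-K}$ at $t=K/N$, getting strictness directly from the presence of at least two positive terms in the binomial expansion. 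Both arrive at exactly the same intermediate estimate before taking the $qn$-th root, but your argument is shorter and avoids Stirling entirely; it is also essentially the same entropy-type bound $\binom{a}{b}\leq \exp\bigl(aH(b/a)\bigr)$ that the paper itself derives and uses later (inequality (2.4) in the proof of Claim 2 of Theorem 2.1), so nothing outside the paper's toolkit is needed. The cancellation of the $n$-powers and the verification that $0<qn<pn$ are handled correctly, so there is no gap.
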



\begin{proof}
	Using Stirling's approximation \cite[inequality (9.14)]{Fel68}
		$$\sqrt{2\pi}\cdot n^{n+\frac{1}{2}}\cdot \exp\left(-n+\frac{1}{12n+1}\right)<n!<\sqrt{2\pi}\cdot n^{n+\frac{1}{2}}\cdot \exp\left(-n+\frac{1}{12n}\right)$$
	in $\binom{pn}{qn}$, we obtain 
	\begin{equation}\label{stirling1}
		\binom{pn}{qn}
		=\frac{(pn)!}{(qn)!((p-q)n)!}<
		\left(\frac{p}{2\pi q(p-q)n}\right)^{\frac{1}{2}}\cdot \exp\left(f(p,q,n)\right)
				\cdot \left(\frac{p^{p}}{q^q(p-q)^{p-q}}\right)^n,
	\end{equation}
	where $$f(p,q,n):=\frac{1}{12pn}-\frac{1}{12qn+1}-\frac{1}{12(p-q)n+1}.$$
	
	\vspace{0.5cm}
	\noindent \textbf{Claim:} Let $p,q,n\in \NN$ be natural numbers with $p>q$. Then 
		$$\exp\left(f(p,q,n)\right)<1\quad \text{and}\quad \frac{p}{2\pi q(p-q)n}<1.$$	
	 
	Note that $f(p,q,n)<0$ and hence $\exp\left(f(p,q,n)\right)<1$.
	To prove the other inequality in the claim first notice that it suffices to assume that $n=1$ and then we have that
		\begin{equation}\label{stirling}
			\frac{p}{2\pi q(p-q)}<1 \quad \Leftrightarrow \quad p<2\pi q(p-q) \quad \Leftrightarrow \quad 2\pi q^2 < p(2\pi q -1).
		\end{equation}
	Now since $q+1\leq p$ it follows that 
		$$2\pi q^2<2\pi q^2 +2\pi q-(q+1)=(q+1)(2\pi q-1)\leq p(2\pi q-1).$$
	Using this together with the equivalences (\ref{stirling}) concludes the proof of the claim.\\
	
	Using the Claim in the inequality (\ref{stirling1}) it follows that
		$$\binom{pn}{qn}< \left(\frac{p^{p}}{q^q(p-q)^{p-q}}\right)^n=\left(\frac{p}{q}\right)^{qn} \left(\frac{p}{p-q}\right)^{(p-q)n},$$
	which proves the lemma.
\end{proof}

\subsubsection{Proof of the lower bound in Theorem \ref{lower-bound}}
	We denote $\cK=\widetilde{\Pos}^{(n,m)}_{(2k_1,2k_2)}$. 
	Note that $\cK$ is a convex body in $\cM$
	with origin in its interior
	and the boundary of $\cK$
	consists of biforms with minimum $-1$ on $T$.
	Indeed, it is easy to see that $\cK$ consists exactly of biforms from $\cM$
	with minimum at least $-1$ on $T$ and that every biform with minimum $-1$ on $T$
	belongs to its boundary. However, if $f\in \cK$ satisfies
	 $m_f:=\min_{(x,y)\in T} f(x,y)>-1$, then the ball
		$$B(f,m_f+1):=\left\{ g\in \RR[\x,\y]_{2k_1,2k_2} \colon
			\|f-g\|_\infty<m_f+1\right\}$$
	also belongs to $\cK$ and hence $f$ belongs to the interior of $\cK.$
	Therefore the gauge
	$G_{\cK}:\cM\to\RR$ of $\cK$ in $\cM$
	 is given by
		$$G_{\cK}(f)=|\min_{v\in T} f(v)|\quad\text{for }f\in \cM.$$
	Let $\widetilde\mu$ be the rotation invariant probability measure on $S_{\cM}$.
	By \cite[p.\ 91]{14}, 
		\begin{equation*} \label{expression-gauge}
			\left( \frac{\Vol \cK}{\Vol B_{\cM}} \right)^{\frac{1}{D_{\cM}}}=
			\left( \int_{S_{\cM}}G_{\cK}^{-D_{\cM}} \dd\widetilde\mu   \right)^{\frac{1}{D_{\cM}}}.
		\end{equation*}
	\noindent By H\"older's inequality we have
			$$\left( \int_{S_{\cM}}G_{\cK}^{-D_{\cM}} \dd\widetilde\mu   \right)^{\frac{1}{D_{\cM}}}\geq
				\int_{S_\cM} G_{\cK}^{-1}\dd\widetilde\mu,$$
	and so
		$$\left(\frac{\Vol \cK}{\Vol B_\cM}\right)^{\frac{1}{D_\cM}}\geq
			\int_{S_\cM} G_{\cK}^{-1}\dd\widetilde\mu.$$
	By Jensen's inequality
	(applied to the convex function $y=\frac{1}{x}$ on $\RR_{>0}$),
		$$\int_{S_\cM} G_{\cK}^{-1}\dd\widetilde\mu \geq \left(\int_{S_\cM} G_{\cK}\dd\widetilde\mu\right)^{-1}.$$
	Since $\left\|f\right\|_\infty=\max_{v\in T} |f(v)|\geq |\min_{v\in T} f(v)|$, it follows that
		\begin{equation*}\label{inequality}
			\left( \frac{\Vol \cK}{\Vol B_{\cM}} \right)^{\frac{1}{D_{\cM}}}\geq
				\left(\int_{S_{\cM}} \left\|f\right\|_\infty \dd\widetilde\mu\right)^{-1}.
		\end{equation*}
	The proof of the lower bound in Theorem \ref{lower-bound} now reduces to proving the following claim.
	\\

	\noindent \textbf{Claim 1:} $\ds\int_{S_{\cM}}\left\|f\right\|_\infty \dd\widetilde\mu \leq \frac{1}{c_{2k_1,2k_2}}$.\\

	To prove this claim we will use \cite[Corollary 2]{Barv1}.
	Write $G=\SO(n)\times \SO(m)$ and consider
	the tensor product
	 $(\RR^n)^{\otimes 2k_1}\otimes (\RR^m)^{\otimes 2k_2}$.
	Let  $e_1\in \RR^n$, $f_1\in \RR^m$ be standard unit vectors
	and let $w$ be the tensor
		$$w:=(e_1)^{\otimes 2k_1}\otimes (f_1)^{\otimes 2k_2}\in
			(\RR^n)^{\otimes 2k_1}\otimes (\RR^m)^{\otimes 2k_2}.$$
	We also define
		$$v:=w-q,\quad\text{where }q=\int_{(g,h)\in G} (g,h)w\; \dd(g,h),$$
	and we integrate w.r.t.\ the Haar measure on $G$.
	Similarly as in \cite[Example 1.2]{Barv-Blek} we proceed as follows:
	\begin{enumerate}[label={\rm(\arabic*)}]
		\item We identify the vector space of biforms from $\RR[\x,\y]_{2k_1,2k_2}$
			with the vector space $V_1$ of the restrictions of linear functionals
				$\ell:(\RR^n)^{\otimes 2k_1}\otimes (\RR^m)^{\otimes 2k_2}\to \RR$
			to the orbit
				 $\left\{(g,h)w\colon (g,h)\in G\right\}.$
		\item We identify the vector space of biforms from $\cM$
			with the vector space $V_2$ of the restrictions of linear functionals
				$\ell:(\RR^n)^{\otimes 2k_1}\otimes (\RR^m)^{\otimes 2k_2}\to \RR$
			to
				$$B=\conv((g,h) v\colon (g,h)\in G).$$
		\item We introduce an inner product on
			$V_2$
			by defining
				$$\langle \ell_1,\ell_2\rangle:=\int_G \ell_1((g,h) v)\cdot \ell_2((g,h) v)\;\dd(g,h).$$
			This inner product also induces the dual inner product on $V_2^\ast\cong V_2$ which we
			also denote by $\langle\cdot,\cdot\rangle$.
	\end{enumerate}
	By  \cite[Corollary 2]{Barv1},
		$$\left\|f\right\|_\infty\leq (D_k)^{\frac{1}{2k}} \cdot \left\|f\right\|_{2k},$$
	where
		$D_k=\dim\Span\{(g,h) w^{\otimes k}\colon (g,h)\in G\}.$ Clearly,
		\[\begin{split}
			D_k &= \dim\Span\{g e_1^{\otimes 2k_1k}\colon g\in \SO(n)\}\cdot \dim\Span\{h f_1^{\otimes 2k_2k}\colon h\in \SO(m)\}\\
				&= \binom{2k_1 k+n-1}{2k_1 k} \binom{2k_2k+m-1}{2k_2 k},
				\end{split}
\]
	where the second equality follows as in \cite[p.\ 404]{Barv1}. 
		\begin{equation} \label{eq:D_k}
		D_k=\binom{2k_1 k+n-1}{2k_1 k} \binom{2k_2k+m-1}{2k_2 k}\leq \binom{2\max(k_1,k_2) k+\max(n,m)-1}{2\max(k_1,k_2) k}^2.
		\end{equation}
	We now distinguish two cases. \\

	\noindent \textbf{Case 1:} $k_1=k_2=1.$\\

	If $\max(n,m)$ is odd, we let $2k_0=9(\max(n,m)-1)$. Otherwise take
	$2k_0=9\max(n,m)$ to get
		$$D_{k_0}^{\frac{1}{2k_0}}\leq \binom{\frac{20}{9}k_0}{2k_0}^{\frac{1}{k_0}}.$$
	Since $2k_0=9\ell_0$ for some $\ell_0\in \NN$ we get 
		$$D_{k_0}^{\frac{1}{2k_0}}\leq \binom{10\ell_0}{9\ell_0}^{\frac{2}{9\ell_0}}\leq \left(\frac{10}{9}\cdot 10^{\frac{1}{9}}\right)^2,$$
	where we used Lemma \ref{binom-lem} in the last inequality. \\

	\noindent \textbf{Case 2:} $k_1>1$ or $k_2>1.$\\

	\noindent \textbf{Claim 2:} For $k_0\geq \lceil \max(m,n)\ln(2\max(k_1,k_2)+1)\rceil$,
	  $$D_{k_0}^{\frac{1}{2k_0}}\leq \exp(3).$$

	We define the function
		$$H(x)=-x\ln(x)-(1-x)\ln(1-x)\quad\text{for }x\in (0,1).$$
	For $\lambda\in \left(0,\frac{1}{2}\right]$ we have the estimate
		\begin{eqnarray*}
		1&=&(\lambda+(1-\lambda))^n=
			\sum_{i=0}^n \binom{n}{i} \lambda^i (1-\lambda)^{n-i}=
			\sum_{i=0}^n \binom{n}{i} (1-\lambda)^n
				\left(\frac{\lambda}{1-\lambda}\right)^i\\
		 &>& \sum_{i=0}^{\lfloor\lambda n\rfloor} \binom{n}{i} (1-\lambda)^n
				\left(\frac{\lambda}{1-\lambda}\right)^{\lambda n}=
			\sum_{i=0}^{\lfloor \lambda n\rfloor} \binom{n}{i} \exp(-nH(\lambda)),
		\end{eqnarray*}
	where we used that $\frac{\lambda}{1-\lambda}\leq 1$ for
	$\lambda \in (0,\frac{1}{2}]$ in the inequality.
	It follows that
		$$\binom{a}{b} \leq \exp\left(aH\left(\frac{b}{a}\right)\right) \quad \text{for }
			a,b\in \NN\text{ and }
			b\leq \lfloor\frac{a}{2}\rfloor.$$
	Since $\binom{a}{b}=\binom{a}{a-b}$ and
	$H\left(\frac{b}{a}\right)=H\left(\frac{a-b}{a}\right)$, we conclude
		\begin{equation}\label{binom-ineq}
			\binom{a}{b} \leq \exp\left(aH\left(\frac{b}{a}\right)\right) \quad \text{for }
			a,b\in \NN\text{ and }
			b\leq a.
		\end{equation}
	Writing $C_1=\max(k_1,k_2)$, $C_2=\max(m,n)$ and using (\ref{eq:D_k}), (\ref{binom-ineq})
	we get
		\begin{eqnarray*}
			D_k^{\frac{1}{2k}}
			&\leq& \left(\exp\left(\left(2C_1k+C_2-1\right) H\left(\frac{2C_1k}{2C_1k+C_2-1}\right)\right)\right)^{\frac{1}{k}}\\
			&=& \exp\left(2C_1\ln\left(1+\frac{C_2-1}{2C_1k}\right)+\frac{C_2-1}{k}
				\ln\left(1+\frac{2C_1k}{C_2-1}\right)\right)\\
			&=& \exp\left(2C_1\ln\left(1+\frac{C_2-1}{2C_1k}\right)\right)
				\cdot\exp\left(\frac{C_2-1}{k}
				\ln\left(1+\frac{2C_1k}{C_2-1}\right)\right)\\
			&\leq& \exp\left(\frac{C_2-1}{k}\right)
				\cdot\exp\left(\frac{C_2-1}{k}
				\ln\left(1+\frac{2C_1k}{C_2-1}\right)\right),
		\end{eqnarray*}
	where we used $\ln(1+x)\leq x$ for $x>-1$ in the second inequality.
	Let as assume that
		$$k_0\geq \lceil C_2\ln(2C_1+1)\rceil.$$
	Then
		$$\exp\left(\frac{C_2-1}{k_0}\right)< \exp(1),$$
	since $k_0\geq C_2.$
	To prove Claim 2 it remains to establish
		\begin{equation}\label{exp-2}
		\exp\left(\frac{C_2-1}{k_0}
		\ln\left(1+\frac{2C_1k_0}{C_2-1}\right)\right)\leq \exp(2).
		\end{equation}
	Notice that (\ref{exp-2}) holds if and only if
		$$\ln\left(1+\frac{2C_1k_0}{C_2-1}\right)\leq \frac{2k_0}{C_2-1}.$$
	Now
		$$\ln\left(1+\frac{2C_1k_0}{C_2-1}\right)\underbrace{\leq}_{k_0\geq C_2}
		\ln\left(\frac{(2C_1+1)k_0}{C_2-1}\right).
		$$
	Thus it suffices to prove that
		$$\ln\left(\frac{(2C_1+1)k_0}{C_2-1}\right)\leq \frac{2k_0}{C_2-1},$$
	or equivalently
		$$
		(C_2-1)\left(\ln\left(2C_1+1\right)+\ln\left(\frac{k_0}{C_2-1}\right)\right)\leq 2k_0.
		$$
	Using $\ln(x)\leq x-1<x$ for $x>0$ we estimate the left hand side from above by
		$$(C_2-1)\ln\left(2C_1+1\right)+k_0$$
	and since
		$$(C_2-1)\ln\left(2C_1+1\right)+k_0\leq 2k_0$$
	if and only if
		$$(C_2-1)\ln\left(2C_1+1\right)\leq k_0,$$
	(\ref{exp-2}) holds.
	Hence Claim 2 follows.\\

	To prove Claim 1 it remains to estimate the average $L^{2k_0}$ norm, i.e.,
		\begin{equation}\label{def-A}
			A=\int_{S_{\cM}} \left\| f\right\|_{2k_0}\; \dd\widetilde\mu = \int_{S_{\cM}} \left(\int_{T}
				f^{2k_0}\; \dd\sigma\right)^{\frac{1}{2k_0}} \dd\widetilde\mu .
		\end{equation}
	Notice that
		\begin{equation}\label{identifikacija}
			\int_{S_{\cM}} \left(\int_{T}
				f^{2k_0}\; \dd\sigma \right)^{\frac{1}{2k_0}}\dd\widetilde\mu=
			\int_{S_{V_2}} \left(\int_G \langle c,(g,h)v\rangle^{2k_0}\dd(g,h)\right)^{\frac{1}{2k_0}}\dd c,
		\end{equation}
	where $S_{V_2}$ is the unit sphere in $V_2$ endowed with the rotation invariant probability measure $c$.
	Combining \eqref{def-A},
 \eqref{identifikacija} we obtain
		$$A=
			\int_{S_{V_2}} \left(\int_G \langle c,(g,h)v\rangle^{2k_0}\dd(g,h)\right)^{\frac{1}{2k_0}}
			\dd c\leq
			\sqrt{\frac{2k_0 \langle v,v\rangle}{D_\cM}}=\sqrt{2k_0},$$
	where we used \cite[Lemma 3.5]{Barv-Blek} for the inequality and
	\cite[Remark p.\ 62]{Barv-Blek} for the last equality.
	This equality proves Claim 1 and establishes the lower bound in Theorem \ref{lower-bound}.
	\hfill\qedsymbol

\subsubsection{Proof of the upper bound in Theorem \ref{lower-bound}}

Before proving the upper bound in Theorem \ref{lower-bound} we introduce the gradient inner products needed in the proof.
Let $f\in  \RR[\x,\y]_{2k_1,2k_2}$ be a biform. For every fixed $y\in \RR^m$ we define a form $f^{y}\in \RR[\x]_{2k_1}$ by
		$$f^{y}(\x):=f(\x,y).$$
	Recall \cite[p.\ 367]{Blek1} that the \textbf{gradient inner product} on
	$\RR[\x]_{2k_1}$ is defined by
		 $$\left\langle h_1, h_2 \right\rangle_{\gr}=
		 	\frac{1}{4k_1^2}\int_{S^{n-1}}
			\left\langle \nabla  h_1 , \nabla h_2 \right\rangle \dd\sigma_1\quad
			\text{for }h_1,h_2\in \RR[\x]_{2k_1},$$
	where
		$$\nabla  h=(\frac{\partial h}{\partial x_1},\ldots, \frac{\partial h}{\partial x_n})\;\text{for }
			h\in \RR[\x]_{2k_1}\quad\text{and}\quad
		\left\langle \nabla  h_1, \nabla h_2\right\rangle=
		\sum_{i=1}^n \frac{\partial h_1}{\partial x_i}\frac{\partial h_2}{\partial x_i}.$$
	We define the \textbf{$\x$-gradient inner product} on $\RR[\x,\y]_{2k_1,2k_2}$ by
		$$\left\langle f, g \right\rangle_{\gr_{\x}}=\int_{S^{m-1}} \left\langle f^{y}, g^{y} \right\rangle_{\gr} \dd\sigma_2(y).$$
	Note that positive definiteness follows by noticing that if $f\in \RR[\x,\y]_{2k_1,2k_2}$
	is a nonzero biform, then there exists $(x,y)\in S^{n-1}\times S^{m-1}$ such that
	$f(x,y)\neq 0$. By continuity it follows that $f^{y_0}$ is nonzero for every $y_0$ in some neighbourhood of $y$. Thus $\left\langle f^{y_0}, f^{y_0} \right\rangle_{\gr}>0$ for
	every $y_0$ in some neighbourhood of $y$. Hence 
	$\left\langle f, f \right\rangle_{\gr_{\x}}>0$.
	
	Let $\left\| f \right\|_{\gr_{\x}}$ be the \textbf{$\x$-gradient norm} of $f$ and
	let $B_{\gr_{\x}}$ be the unit ball in the $\x$-gradient norm.

\begin{proof}[Proof of the upper bound in Theorem \ref{lower-bound}]
	Let $\widetilde{\Pos^{\circ}}$ denote the polar dual of the section $\widetilde{\Pos}^{(n,m)}_{(2k_1,2k_2)}$ in
	$\cM$,
		$$\widetilde{\Pos^{\circ}}=\left\{ f\in \cM\colon \left\langle f, g \right\rangle \leq 1\quad
			\text{for all } g\in \widetilde{\Pos}^{(n,m)}_{(2k_1,2k_2)} \right\}.$$
	By the Blaschke-Santal\'o inequality \cite{MP} applied to
	$\widetilde{\Pos}^{(n,m)}_{(k_1,k_2)}$
 	we get that
		\begin{equation} \label{Blaschke}
			\Vol \left(\widetilde{\Pos}^{(n,m)}_{(2k_1,2k_2)} \right)\Vol \left(\widetilde{\Pos^{\circ}} \right)\leq \left(\Vol B_{\cM} \right)^2.
		\end{equation}
	(Note that for the validity of (\ref{Blaschke}) we used the fact that
	the origin is the Santal\'o point of $\widetilde{\Pos}^{(n,m)}_{(2k_1,2k_2)}$. This fact follows
	by observing that the origin is the unique point in the convex body $\widetilde{\Pos}^{(n,m)}_{(2k_1,2k_2)}$ fixed by the action of $\SO(n)\times \SO(m)$
	and that $\widetilde{\Pos}^{(n,m)}_{(2k_1,2k_2)}$ is also
	invariant under the action of $\SO(n)\times \SO(m)$.)
		Hence it suffices to prove that
		\begin{equation} \label{estimate-10}
			\left(\frac{\Vol \widetilde{\Pos^{\circ}} }{\Vol B_{\cM}}\right)^{\frac{1}{D_{\cM}}} \geq
				\frac{1}{2} \left(
				\max\left(\frac{2k_1^2+n}{2k_1^2},  \frac{2k_2^2+m}{2k_2^2}\right)  \right)^{\frac{1}{2}}.
		\end{equation}
	Let $B_{\infty}$ be the unit ball of the supremum norm in $\cM$.
	We notice that
		$$B_{\infty}= \widetilde{\Pos}^{(n,m)}_{(2k_1,2k_2)}  \bigcap
			-\widetilde{\Pos}^{(n,m)}_{(2k_1,2k_2)},$$
	and by taking polar duals we get
		$$B_\infty^\circ=\text{ConvexHull}\{\widetilde{\Pos^\circ},-\widetilde{\Pos^\circ}\}$$
	By a theorem of Rogers and Shephard
	\cite[Theorem 3]{RS2}, it follows that
		$$\Vol B_\infty^\circ\leq 2^{D_{\cM}}\Vol \widetilde{\Pos^\circ}.$$
	Thus
		\begin{equation}\label{estimate-11}
			\left(\frac{\Vol \widetilde{\Pos^{\circ}} }{\Vol B_{\infty}^\circ }\right)^{\frac{1}{D_{\cM}}} \geq \frac{1}{2}.
		\end{equation}
	Using (\ref{estimate-10}) and (\ref{estimate-11}) the proof of the upper bound in 
	Theorem \ref{lower-bound} reduces to establishing
		\begin{equation}\label{new-estimate}
			\left(\frac{ \Vol B_{\infty}^\circ }{\Vol B_{\cM}} \right)^{\frac{1}{D_{\cM}}}\geq \left( \max\left(\frac{2k_1^2+n}{2k_1^2},  \frac{2k_2^2+m}{2k_2^2}\right)   \right)^{\frac{1}{2}}.
		\end{equation}

	\noindent \textbf{Claim:} $\ds\left(\frac{ \Vol B_{\infty}^\circ }{\Vol B_{\cM}} \right)^{\frac{1}{D_{M}}}\geq \left( \frac{2k_1^2+n}{2k_1^2} \right)^{\frac{1}{2}}.$\\

	We estimate
		\begin{equation} \label{Kellog-est}
			\left\| f\right\|_{\infty}=
			\max_{y\in S^{m-1}} \left(\max_{x\in S^{n-1}} \left|f^{y}(x) \right|\right)
			\geq  \max_{y\in S^{m-1}} \left\|f^{y}(\x) \right\|_{\gr} \geq  \left\| f\right\|_{\gr_{\x}},
		\end{equation}
	where the first inequality follows by \cite[Theorem IV]{Kellog},
		$$\left\| \left\langle \nabla  f^{y}, \nabla f^{y}\right\rangle \right\|_{\infty}\leq
			4k_1^2 \left\| f^{y}\right\|^2_\infty.$$
	Using (\ref{Kellog-est}) we get the inclusion
		\begin{equation} \label{ball-inclusion}
			B_{\infty} \subseteq B_{\gr_{\x}} \quad \text{and hence}\quad
			B_{\gr_{\x}}^\circ\subseteq B_{\infty}^\circ,
		\end{equation}
	where $B_{\infty}^\circ$ and  $B_{\gr_{\x}}^\circ$ are the polar duals of  $B_{\infty}$
	and  $B_{\gr_{\x}}$, respectively.
	Since $B_{\gr_{\x}}$ is an ellipsoid (the $\x$-gradient norm is induced from an inner product),
	we deduce
		$$\Vol B_{\gr_{\x}}^\circ=\frac{(\Vol B_\cM)^2}{\Vol B_{\gr_{\x}}}$$
	and hence by (\ref{ball-inclusion}),
		$$\Vol B_{\infty}^\circ\geq \frac{(\Vol B_\cM)^2}{\Vol B_{\gr_{\x}}}.$$
	Therefore the proof of the Claim reduces to showing that
		$$  \left\langle f, f \right\rangle_{\gr_{\x}} \geq \frac{2k_1^2+n}{2k_1^2}  \left\langle f, f \right\rangle. $$
	We estimate
		\begin{eqnarray*}
			\left\langle f, f \right\rangle_{\gr_{\x}}
			&=& \int_{S^{m-1}} \left\langle f^{y}, f^{y} \right\rangle_{\gr} \dd\sigma_2(y)\\
			&\geq& \frac{2k_1^2+n}{2k_1^2} \int_{S^{m-1}} \left\langle f^{y}, f^{y} \right\rangle \dd\sigma_2(y)=
				\frac{2k_1^2+n}{2k_1^2} \left\langle f, f \right\rangle,
		\end{eqnarray*}
	where the inequality follows by \cite[(4.3.1)]{Blek1}.
	This proves the Claim.\\

	 By interchanging the roles of $\x$ and $\y$ in the Claim we also obtain the inequality
	 	$$\ds\left(\frac{ \Vol B_{\infty}^\circ }{\Vol B_{\cM}} \right)^{\frac{1}{D_{M}}}\geq
			 \left( \frac{2k_2^2+m}{2k_2^2} \right)^{\frac{1}{2}},$$
	which proves (\ref{new-estimate}) and
	concludes the proof of the upper bound in Theorem \ref{lower-bound}.
\end{proof}

\subsection{The apolar inner product on $\RR[\x,\y]_{2k_1,2k_2}$}

\label{dif-inn-prod} 

Before tackling the bounds for the volume of the section of sum of squares biforms we have to extend some of the results
on the apolar inner product given in \cite[\S 5]{Blek1}.

For technical reasons we identify $\RR[\x,\y]_{2k_1,2k_2}$ with $\RR[\x]_{2k_1}\otimes \RR[\y]_{2k_2}$  in the natural way.
Recall from \cite[p.\ 11]{Rez92}
that for a form
	$$r=\sum_{\alpha=(i_1,\ldots,i_n)} c_\alpha x_1^{i_1}\cdots x_{n}^{i_n}\in \RR[\x]_{2k_1},$$
the associated differential operator $D^{\x}_r$ is defined by
	$$D^{\x}_r= \sum_{\alpha=(i_1,\ldots,i_n)} c_\alpha \frac{\partial^{i_1}}{\partial{x_1^{i_1}} }\cdots \frac{\partial^{i_n}}{\partial{x_{n}^{i_n}}}.$$
The operator $D^{\x}_r$ induces the inner product on $\RR[\x]_{2k_1}$, called
the \textbf{$\x$-apolar inner product}, defined by
	$$\left\langle r,s\right\rangle_{d_{\x}}=D_r^{\x}(s)\quad \text{for }s\in \RR[\x]_{2k_1}.$$
Note that positive definiteness follows from 
$D_r^{\x}(r)=\sum_{\alpha} c_\alpha^2 \cdot i_1! \cdots i_n!.$
Analogously we define the differential operator $D^{\y}_t$ for a form $t\in \RR[\y]_{2k_2}$ and the \textbf{$\y$-apolar inner product}
$\left\langle \cdot,\cdot\right\rangle_{d_{\y}}$ on $\RR[\y]_{2k_2}$.

	To every form $f\in \RR[\x]_{2k_1}\otimes \RR[\y]_{2k_2}$,
	\begin{eqnarray*}
		f
		&=& \sum_\ell f_{\ell 1}(\x)\otimes f_{\ell 2}(\y)\\
		&=& \sum_\ell\left(
			\sum_{\alpha=(i_1,\ldots,i_n)} c_\alpha^{(\ell)} x_1^{i_1}\cdots x_{n}^{i_n} \otimes
			\sum_{\beta=(j_1,\ldots,j_m)} d_\beta^{(\ell)} y_1^{j_1}\cdots y_{m}^{j_m}\right),
	\end{eqnarray*}
we associate the differential operator $D_f$ by
	\begin{eqnarray*}
		D_f	&=&
			\sum_\ell D^{\x}_{f_{\ell 1}}\otimes D^{\y}_{f_{\ell 2}}\\
			&=&
			\sum_\ell\left(
			\sum_{\alpha=(i_1,\ldots,i_n)}
				c_\alpha^{(\ell)} \frac{\partial^{i_1}}{\partial x_1^{i_1}}\cdots
					\frac{\partial^{i_n}}{\partial x_n^{i_n}} \otimes
			\sum_{\beta=(j_1,\ldots,j_m)} d_\beta^{(\ell)} \frac{\partial^{j_1}}{\partial y_1^{j_1}}
				\cdots  \frac{\partial^{j_m}}{\partial y_m^{j_m}}\right),
	\end{eqnarray*}
and the corresponding inner product, called the \textbf{apolar inner product}, by
	$$\left\langle f,g\right\rangle_d=D_f(g)\quad \text{for }g\in \RR[\x]_{2k_1}
		\otimes \RR[\y]_{2k_2}.$$
\begin{example}
	For $f=f_1\otimes f_2\in \RR[\x]_{2k_1}\otimes \RR[\y]_{2k_2}$ and 
	$g=g_1\otimes g_2\in \RR[\x]_{2k_1}\otimes \RR[\y]_{2k_2}$, we have
		$$D_f(g)=D_{f_1}(g_1)D_{f_2}(g_2).$$
\end{example}
Note that this inner product is invariant under the action of $\SO(n)\times \SO(m)$. For a point $v=(v_1,\ldots,v_n)\in S^{n-1}$, we denote by
$v^{2k_1}$ the form
	$$v^{2k_1}:=(v_1x_1+\cdots+v_nx_n)^{2k_1}\in \RR[\x]_{2k_1}.$$
We define a linear operator $T_v:\RR[\x]_{2k_1}\to \RR[\x]_{2k_1}$ by
	\begin{equation}\label{definicija-T_v}
		T_v(r)= \int_{S^{n-1}} r(v) v^{2k_1} \dd\sigma_1(v).
	\end{equation}
Analogously for a point $u=(u_1,\ldots,u_m)\in S^{m-1}$  we denote by $u^{2k_2}$ and
$T^u$ the form and the linear operator on $\RR[\y]_{2k_2}$ given by
	$$u^{2k_2}=(\sum_{j=1}^m u_jy_j)^{2k_2}\in \RR[\y]_{2k_2}\quad\text{and}\quad
		T^u(t)= \int_{S^{m-1}} t(u) u^{2k_2} \dd\sigma_2(u).$$
Finally let
	$$T:\RR[\x]_{2k_1}\otimes \RR[\y]_{2k_2}\to \RR[\x]_{2k_1}\otimes \RR[\y]_{2k_2}$$
be the linear operator defined by
	$$T\left(\sum_{\ell} f_{\ell 1} \otimes f_{\ell 2}\right)=
		\sum_{\ell } T_v\left(f_{\ell 1}\right)\otimes T^u\left(f_{\ell 2}\right).$$
Some properties of the operator $T$ we will need are collected in the following lemma.

\begin{lemma}	\label{pomozna}
	The following statements hold:
	\begin{enumerate}[label={\rm(\arabic*)}]
		\item \label{pom-pt1} 
			The operator $T$ relates the two inner products by the following identity,
	$$\left\langle Tf,g\right\rangle_d = (2k_1)! (2k_2)! \left\langle f,g\right\rangle.$$
		\item \label{pom-pt2}
			The operator $T$ is bijective.
		\item \label{pom-pt3} The operator $T$ has eigenspace
			$\Span\left\{(\sum_{i=1}^n x_i^2)^{k_1}\otimes (\sum_{j=1}^m y_j^2)^{k_2}\right\}$, i.e.,
	$$T\left((\sum_{i=1}^n x_i^2)^{k_1}\otimes (\sum_{j=1}^m y_j^2)^{k_2}\right)=
		c\cdot \left( (\sum_{i=1}^n x_i^2)^{k_1}\otimes (\sum_{j=1}^m y_j^2)^{k_2}\right),$$
	where
	$$c=\frac{\Gamma(k_1+\frac{1}{2}) \Gamma(\frac{n}{2})  }
			{\sqrt{\pi}\Gamma(k_1+\frac{n}{2})} 
		\frac{\Gamma(k_2+\frac{1}{2}) \Gamma(\frac{m}{2})  }
			{\sqrt{\pi} \Gamma(k_2+\frac{m}{2})}.$$
	\end{enumerate}
\end{lemma}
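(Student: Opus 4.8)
The three statements about $T$ should all be reduced to the one-variable case, where they are essentially \cite[\S 5]{Blek1}, via the tensor decomposition $T = $ "$T_v \otimes T^u$" and the observation that $D_f$, $\langle\cdot,\cdot\rangle_d$ and $\langle\cdot,\cdot\rangle$ all factor through the tensor product (as recorded in the Example preceding the lemma). So the first step is to recall the one-variable facts: the operators $T_v$ on $\RR[\x]_{2k_1}$ and $T^u$ on $\RR[\y]_{2k_2}$ satisfy $\langle T_v r, s\rangle_{d_\x} = (2k_1)!\,\langle r,s\rangle$ and $\langle T^u t, t'\rangle_{d_\y} = (2k_2)!\,\langle t,t'\rangle$ (this is exactly the content of Blekherman's computation of the apolar/$L^2$ comparison, using that $D^\x_{v^{2k_1}}(r) = (2k_1)!\,r(v)$ for the power $v^{2k_1}$), and that $T_v$, $T^u$ are bijective and self-adjoint with respect to the apolar product.

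\textbf{Part (1).} For $f = \sum_\ell f_{\ell1}\otimes f_{\ell2}$ and $g = \sum_j g_{j1}\otimes g_{j2}$, expand by bilinearity:
\[
\langle Tf, g\rangle_d = \sum_{\ell,j} \langle T_v f_{\ell1}, g_{j1}\rangle_{d_\x}\,\langle T^u f_{\ell2}, g_{j2}\rangle_{d_\y} = \sum_{\ell,j} (2k_1)!\,(2k_2)!\,\langle f_{\ell1}, g_{j1}\rangle\,\langle f_{\ell2}, g_{j2}\rangle,
\]
and the last sum is $(2k_1)!\,(2k_2)!\,\langle f,g\rangle$ because the $L^2$ inner product on $T = S^{n-1}\times S^{m-1}$ with the product measure $\sigma_1\times\sigma_2$ is the tensor product of the two one-variable $L^2$ inner products. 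One subtlety worth a sentence: the identity must be checked to be independent of the chosen representation of $f$ and $g$ as sums of elementary tensors; this is automatic since both sides are well-defined bilinear forms on the tensor product, but I would note it.

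\textbf{Parts (2) and (3).} Bijectivity of $T$ is immediate from $T = T_v\otimes T^u$ and bijectivity of each factor (a tensor product of invertible linear maps on finite-dimensional spaces is invertible, with inverse $T_v^{-1}\otimes (T^u)^{-1}$). Alternatively, (2) follows from (1): since $(2k_1)!(2k_2)!\langle\cdot,\cdot\rangle$ is a nondegenerate pairing and the apolar product is nondegenerate, $T$ has trivial kernel, hence is bijective on a finite-dimensional space. For (3), apply $T_v$ to $(\sum x_i^2)^{k_1}$ and $T^u$ to $(\sum y_j^2)^{k_2}$ separately: by $\SO(n)$-invariance of $T_v$ and of the form $(\sum x_i^2)^{k_1}$ (it is the unique, up to scalar, $\SO(n)$-fixed vector in $\RR[\x]_{2k_1}$), $T_v$ maps it to a scalar multiple of itself, and the scalar is computed by evaluating $T_v\big((\sum x_i^2)^{k_1}\big)$ against a convenient functional — e.g.\ pairing with $x_1^{2k_1}$ via the apolar product, or simply integrating, which reduces to the Beta-integral $\int_{S^{n-1}} v_1^{2k_1}\,\dd\sigma_1(v) = \frac{\Gamma(k_1+\frac12)\Gamma(n/2)}{\sqrt\pi\,\Gamma(k_1+n/2)}$. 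Multiplying the two factors gives the stated constant $c$. The main obstacle is bookkeeping rather than conceptual: getting the one-variable normalization constants exactly right (the factorials $(2k_i)!$ and the Gamma-ratios) and making the tensor-factorization of all three inner products explicit and clearly well-defined; once that is set up, each of (1)–(3) is a short computation.
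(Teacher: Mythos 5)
Your proposal is correct and follows essentially the same route as the paper: reduce each statement to the one-variable operators $T_v$, $T^u$ via bilinearity and the tensor factorization of the apolar and $L^2$ inner products, then invoke Blekherman's one-variable results (\cite[Lemma 5.1 and p.~371]{Blek1}). The only cosmetic differences are that you spell out the scalar computation in (3) (invariance plus the Beta integral) where the paper cites Blekherman, and you offer the tensor-product-of-bijections argument for (2) alongside the paper's trivial-kernel argument via (1); both are fine.
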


\begin{proof}
		By  bilinearity it suffices to prove Lemma \ref{pomozna} \ref{pom-pt1}
	only for elementary tensors
		$f=f_1\otimes f_2 ,$ $g= g_1\otimes g_2\in \RR[\x]_{2k_1}\otimes \RR[\y]_{2k_2}$. Since
			\begin{eqnarray*}
				\left\langle Tf,g\right\rangle_d &=& \left\langle T_v f_1,g_1\right\rangle_{d_{\x}} \left\langle T^u f_2,g_2\right\rangle_{d_{\y}},\\
				\left\langle f,g\right\rangle &=& \left\langle f_1,g_1\right\rangle \left\langle f_2,g_2\right\rangle,
			\end{eqnarray*}
		Lemma \ref{pomozna} \ref{pom-pt1} follows by \cite[Lemma 5.1]{Blek1}.
		
		Since $T$ maps from the finite-dimensional vector space into itself, 
		to prove Lemma \ref{pomozna} \ref{pom-pt2}, it suffices to prove that the kernel 
		of $T$ is trivial.	
		Let us assume that $Tf=0$ for some 
		$f\in \RR[\x]_{2k_1}\otimes \RR[\y]_{2k_2}$. 
		By Lemma \ref{pomozna} \ref{pom-pt1}
		it follows that $\left\langle f,f\right\rangle=0$. 
		Hence $f=0$ and the kernel of $T$ is trivial.
		
		Finally, Lemma \ref{pomozna} \ref{pom-pt3} follows
		by
		\begin{align*}
	 	T\left((\sum_{i=1}^n x_i^2)^{k_1}\otimes (\sum_{j=1}^m y_j^2)^{k_2}\right)=
		T_v\left((\sum_{i=1}^n x_i^2)^{k_1}\right) \otimes T^u\left((\sum_{j=1}^m y_j^2)^{k_2}\right)\\
		=\left(\frac{\Gamma(k_1+\frac{1}{2}) \Gamma(\frac{n}{2})  }{\sqrt{\pi}\Gamma(k_1+\frac{n}{2})}
		(\sum_{i=1}^n x_i^2)^{k_1}\right)\otimes
		\left(\frac{\Gamma(k_2+\frac{1}{2}) \Gamma(\frac{m}{2})  }{\sqrt{\pi} \Gamma(k_2+\frac{m}{2})}
		(\sum_{j=1}^m y_j^2)^{k_2}\right),
	\end{align*}
	where the second equality follows by \cite[p.\ 371]{Blek1} used for $T_v$ and $T^u$.
\end{proof}

	Let $\cL$ be a full-dimensional cone
	in $\RR[\x]_{2k_1}\otimes \RR[\y]_{2k_2}$ containing
		$(\sum_{i=1}^n x_i^2)^{k_1}\otimes (\sum_{j=1}^m y_j^2)^{k_2}$
	in its interior, and satisfying $\int_T f \dd\sigma>0$ for all non-zero $f\in \cL$.
	Let $\widetilde{\cL}$ be the subset of $\cM$ defined by
		$$ \widetilde{\cL}=\left\{ f\in \cM\colon f+(\sum_{i=1}^n x_i^2)^{k_1}\otimes (\sum_{j=1}^m y_j^2)^{k_2}\in \cL\right\}.$$
Let $\cL^{\ast}$ be the dual cone of $\cL$ w.r.t.\ the $L^2$ inner product and $\cL^\ast_d$ the dual cone of $\cL$ in the apolar inner product,
	\begin{eqnarray*}
		\cL^{\ast} &=& \left\{  f\in \RR[\x]_{2k_1}\otimes \RR[\y]_{2k_2}\colon \left\langle f,g\right\rangle\geq 0 \quad \text{for all } g\in \cL   \right\},\\
		\cL^{\ast}_d &=& \left\{  f\in \RR[\x]_{2k_1}\otimes \RR[\y]_{2k_2}\colon \left\langle f,g\right\rangle_d \geq 0 \quad \text{for all } g\in \cL   \right\}.
	\end{eqnarray*}

\begin{proposition}
		The biform $(\sum_{i=1}^n x_i^2)^{k_1}\otimes (\sum_{j=1}^m y_j^2)^{k_2}$ 
	belongs to the interiors of $\cL^\ast$ and $\cL^\ast_d$.
\end{proposition}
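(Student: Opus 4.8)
The plan is to reduce the entire statement to one observation about $p_0:=(\sum_{i=1}^n x_i^2)^{k_1}\otimes(\sum_{j=1}^m y_j^2)^{k_2}$: under the identification of $\RR[\x]_{2k_1}\otimes\RR[\y]_{2k_2}$ with $\RR[\x,\y]_{2k_1,2k_2}$, the biform $p_0$ restricts to the constant $1$ on $T=S^{n-1}\times S^{m-1}$, since $\sum_i x_i^2=1$ on $S^{n-1}$ and $\sum_j y_j^2=1$ on $S^{m-1}$. Hence $\langle p_0,g\rangle=\int_T p_0\,g\,\dd\sigma=\int_T g\,\dd\sigma$ for every $g$, so the linear functional $g\mapsto\langle p_0,g\rangle$ is precisely the functional $\ell(g):=\int_T g\,\dd\sigma$ appearing in the hypothesis. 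For the apolar pairing I would invoke Lemma \ref{pomozna}: part \ref{pom-pt3} gives $T p_0=c\,p_0$ with $c>0$ (the displayed constant is a product of values of $\Gamma$ at positive arguments), and part \ref{pom-pt1} gives $\langle Tp_0,g\rangle_d=(2k_1)!\,(2k_2)!\,\langle p_0,g\rangle$; combining these yields $\langle p_0,g\rangle_d=\frac{(2k_1)!\,(2k_2)!}{c}\int_T g\,\dd\sigma$. Thus both $g\mapsto\langle p_0,g\rangle$ and $g\mapsto\langle p_0,g\rangle_d$ are strictly positive scalar multiples of $\ell$.

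Next I would establish the elementary fact that if $N$ is a finite-dimensional inner product space, $K\subseteq N$ a closed convex cone, and $f\in N$ satisfies $\langle f,g\rangle>0$ for all $g\in K\setminus\{0\}$, then $f\in\Int K^\ast$; applying it twice (once with the $L^2$ inner product and once with the apolar inner product on $\RR[\x]_{2k_1}\otimes\RR[\y]_{2k_2}$, which induce the same topology on this finite-dimensional space and hence the same notion of interior) finishes the proof, because by the previous paragraph $\langle p_0,\cdot\rangle$ and $\langle p_0,\cdot\rangle_d$ are positive multiples of $\ell$, while $\ell>0$ on $\cL\setminus\{0\}$ by hypothesis. (Here $\cL$ is closed, as it is in the two intended applications $\Pos^{(n,m)}_{(2k_1,2k_2)}$ and $\Sq^{(n,m)}_{(2k_1,2k_2)}$.) To prove the elementary fact: the hypothesis forces $K$ to be pointed, so the base $\Delta:=\{g\in K:\langle f,g\rangle=1\}$ is compact — a sequence $g_j\in\Delta$ with $\|g_j\|\to\infty$ would have a subsequence with $g_j/\|g_j\|\to h\in K$, $\|h\|=1$, yet $\langle f,h\rangle=\lim 1/\|g_j\|=0$, a contradiction. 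Setting $M_0:=\max_{g\in\Delta}\|g\|<\infty$ and scaling an arbitrary nonzero $g\in K$ to $\Delta$, one gets $\langle f,g\rangle\ge M_0^{-1}\|g\|$ for all $g\in K$, so for $\|u\|<M_0^{-1}$ one has $\langle f+u,g\rangle\ge\langle f,g\rangle-\|u\|\,\|g\|>0$ on $K\setminus\{0\}$, i.e.\ $f+u\in K^\ast$; hence $f\in\Int K^\ast$.

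The argument is almost entirely bookkeeping: the only points requiring care are the verification that the interior (not merely membership in the dual cone) is obtained — this is exactly where the hypothesis $\int_T f\,\dd\sigma>0$ for all nonzero $f\in\cL$, rather than just $\ge 0$, enters essentially, through the compactness of the base $\Delta$ — and the positivity of the constant $c$ from Lemma \ref{pomozna}\,\ref{pom-pt3}, which is immediate since $\Gamma$ is positive on $(0,\infty)$. I do not expect any serious obstacle.
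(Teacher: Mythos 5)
Your proposal is correct and follows essentially the same route as the paper: reduce both pairings against $(\sum_i x_i^2)^{k_1}\otimes(\sum_j y_j^2)^{k_2}$ to positive multiples of $g\mapsto\int_T g\,\dd\sigma$ via the fact that this biform is identically $1$ on $T$ and via Lemma \ref{pomozna}, then conclude from the hypothesis on $\cL$. The only difference is that you additionally prove the step the paper merely asserts (that strict positivity of the pairing on $\cL\setminus\{0\}$ gives a point of $\Int\cL^\ast$, which does require closedness of $\cL$, satisfied in both intended applications), and your constant $\frac{(2k_1)!\,(2k_2)!}{c}$ is the correct one.
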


\begin{proof}
		The biform
	$r_\x^{k_1}\otimes s_{\y}^{k_2}:=
	(\sum_{i=1}^n x_i^2)^{k_1}\otimes (\sum_{j=1}^m y_j^2)^{k_2}$ is in 
	the interior of $\cL^\ast$ (resp.\ $\cL^\ast_d$) if and only if
	$\left\langle r_\x^{k_1}\otimes s_{\y}^{k_2},g\right\rangle>0$ 
	(resp.\ $\left\langle r_\x^{k_1}\otimes s_{\y}^{k_2},g\right\rangle_d>0$) 
	is true for all $g\in \cL$. 
	Since
		$$\left\langle r_\x^{k_1}\otimes s_{\y}^{k_2},g\right\rangle=\int_T
			(r_\x^{k_1}\otimes s_{\y}^{k_2})\cdot g \dd\sigma=
			\int_T g \dd\sigma,
		$$
	(resp.\
	\begin{eqnarray*}
			\left\langle r_\x^{k_1}\otimes s_{\y}^{k_2},g\right\rangle_d
			&=&\frac{1}{(2k_1)!(2k_2)!} 
			\left\langle T^{-1} \left(r_\x^{k_1}\otimes s_{\y}^{k_2}\right),g\right\rangle
			= \frac{1}{(2k_1)!(2k_2)! c} 
			\left\langle \left(r_\x^{k_1}\otimes s_{\y}^{k_2}\right),g\right\rangle\\
			&=&
			\frac{1}{(2k_1)!(2k_2)! c} \int_T g \dd\sigma,
	\end{eqnarray*}
	where $c$ is defined as in Lemma \ref{pomozna} \ref{pom-pt3}, 
	and the first equality follows by Lemma \ref{pomozna} \ref{pom-pt1}, 
	\ref{pom-pt2}, while the second one by  Lemma \ref{pomozna} \ref{pom-pt3}),
	this is true by definition of $\cL$.
\end{proof}

	Let $\widetilde{\cL^\ast}$ and $\widetilde{\cL^\ast_d}$ be defined by
	\begin{eqnarray*}
		\widetilde{\cL^\ast}
		&=&	\left\{ f\in \cM\colon f+(\sum_{i=1}^n x_i^2)^{k_1}\otimes
			(\sum_{j=1}^m y_j^2)^{k_2}\in \cL^\ast\right\},\\
		\widetilde{L^\ast_d}
		&=&	\left\{ f\in \cM\colon f+(\sum_{i=1}^n x_i^2)^{k_1}\otimes
			(\sum_{j=1}^m y_j^2)^{k_2}\in \cL^\ast_d\right\}.
	\end{eqnarray*}

The following is an analog of \cite[Lemma 5.2]{Blek1}.

\begin{lemma} \label{cone-L}
		Let $\cL$ be a full dimensional cone in $\RR[\x]_{2k_1}\otimes \RR[\y]_{2k_2}$ such that the polynomial $(\sum_{i=1}^n x_i^2)^{k_1}\otimes (\sum_{j=1}^m y_j^2)^{k_2}$
	is the interior point of $\cL$ and $\int_T f \dd\sigma>0$ for all non-zero $f\in \cL$. Then we have the following
	relationship between the volumes of $\widetilde{\cL^\ast}$ and $\widetilde{\cL^\ast_d}$:
		$$\frac{k_1!}{(\frac{n}{2}+2k_1)^{k_1}}\frac{k_2!}{(\frac{m}{2}+2k_2)^{k_2}}\leq \left(\frac{\Vol \widetilde{\cL^\ast_d}}{\Vol \widetilde{\cL^\ast}}\right)^{\frac{1}{D_{\cM}}}
			\leq \left(\frac{k_1!}{(\frac{n}{2}+k_1)^{k_1}}\frac{k_2!}{(\frac{m}{2}+k_2)^{k_2}}\right)^{\alpha_{2k_1,2k_2}},$$
	where $$\alpha_{2k_1,2k_2}=1-\left(\frac{2k_1-1}{n+2k_1-1}\right)^2-\left(\frac{2k_2-1}{m+2k_2-1}\right)^2+
			\left(\frac{2k_1}{n+2k_1-2} \frac{2k_2}{m+2k_2-2}\right)^2.$$
\end{lemma}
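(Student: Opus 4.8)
The plan is to follow the strategy of \cite[Lemma 5.2]{Blek1}: use the operator $T$ of Lemma \ref{pomozna} to pass from the apolar dual cone to the $L^2$ dual cone, thereby reducing the volume ratio to a weighted geometric mean of the eigenvalues of $T$, and then estimate that mean via the harmonic decompositions of $\RR[\x]_{2k_1}$ and $\RR[\y]_{2k_2}$. Write $u=(\sum_{i=1}^n x_i^2)^{k_1}\otimes(\sum_{j=1}^m y_j^2)^{k_2}$ and let $c$ be the scalar of Lemma \ref{pomozna} \ref{pom-pt3}. By Lemma \ref{pomozna} \ref{pom-pt1}, \ref{pom-pt2} one has $\langle f,g\rangle_d=(2k_1)!(2k_2)!\,\langle T^{-1}f,g\rangle$ for all $f,g$, so $f\in\cL^\ast_d$ iff $T^{-1}f\in\cL^\ast$, i.e.\ $\cL^\ast_d=T(\cL^\ast)$. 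Since $Tu=cu$ (Lemma \ref{pomozna} \ref{pom-pt3}), Lemma \ref{pomozna} \ref{pom-pt1} also yields $\langle h,u\rangle_d=\tfrac{(2k_1)!(2k_2)!}{c}\,\langle h,u\rangle$ for every $h$, whence $Tf\in\cM$ whenever $f\in\cM$; thus $T$ restricts to a linear automorphism of $\cM$. A short computation then gives $\widetilde{\cL^\ast_d}=\tfrac1c\,T(\widetilde{\cL^\ast})$ inside $\cM$ (from $h+u\in T(\cL^\ast)\Leftrightarrow T^{-1}h+\tfrac1c u\in\cL^\ast\Leftrightarrow cT^{-1}h+u\in\cL^\ast$), so that
$$\left(\frac{\Vol\widetilde{\cL^\ast_d}}{\Vol\widetilde{\cL^\ast}}\right)^{\frac1{D_\cM}}=\Big(\det\big(\tfrac1c\,T|_\cM\big)\Big)^{\frac1{D_\cM}},$$
the geometric mean (with multiplicity) of the numbers $\lambda/c$, $\lambda$ ranging over the eigenvalues of $T$ on $\cM$. (That $\widetilde{\cL^\ast}$ and $\widetilde{\cL^\ast_d}$ are full-dimensional convex bodies of finite positive volume follows from the preceding proposition together with $u\in\Int\cL$.)

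To identify these eigenvalues, decompose $T=T_v\otimes T^u$ using $\RR[\x]_{2k_1}=\bigoplus_{j=0}^{k_1}(\sum_{i=1}^n x_i^2)^{j}\cH^{(n)}_{2k_1-2j}$, the decomposition into harmonic pieces, which is orthogonal for both $\langle\cdot,\cdot\rangle$ and $\langle\cdot,\cdot\rangle_d$ by $\SO(n)$-invariance and Schur's lemma. On the $j$-th summand $T_v$ acts as a positive scalar $\mu_j$ with $\mu_{k_1}=c_1$ (the $\x$-factor of $c$), the values $\mu_j$ and the estimates for $\mu_j/c_1$ being those of \cite[\S5]{Blek1}; similarly $T^u$ acts by $\nu_\ell$ with $\nu_{k_2}=c_2$. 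Hence $T$ acts by $\mu_j\nu_\ell$ on $(\sum x_i^2)^{j}\cH^{(n)}_{2k_1-2j}\otimes(\sum y_j^2)^{\ell}\cH^{(m)}_{2k_2-2\ell}$, and $\cM$ is the sum of all these summands except the one with $(j,\ell)=(k_1,k_2)$, namely $\Span u$. For the lower bound it suffices to bound the geometric mean below by its smallest term $\mu_j\nu_\ell/(c_1c_2)$; by positivity and monotonicity of $\mu_j/c_1$ and $\nu_\ell/c_2$ this is the $(j,\ell)=(0,0)$ term, and $\mu_0\nu_0/(c_1c_2)\ge\frac{k_1!}{(\frac n2+2k_1)^{k_1}}\frac{k_2!}{(\frac m2+2k_2)^{k_2}}$ is an elementary estimate on the telescoping quotients of Gamma functions that define $c$ and $\mu_0,\nu_0$.

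For the upper bound I would evaluate the weighted geometric mean exactly. Since the $(k_1,k_2)$-summand contributes a factor $1$ with multiplicity $1$ and $D_\cM=D_\x D_\y-1$ where $D_\x=\dim\RR[\x]_{2k_1}$ and $D_\y=\dim\RR[\y]_{2k_2}$, the mean equals $\big(\mathrm{GM}_\x\,\mathrm{GM}_\y\big)^{D_\x D_\y/(D_\x D_\y-1)}$ with $\mathrm{GM}_\x=\prod_{j=0}^{k_1}(\mu_j/c_1)^{\dim\cH^{(n)}_{2k_1-2j}/D_\x}$ — precisely the quantity bounded in \cite[Lemma 5.2]{Blek1} — and similarly for $\mathrm{GM}_\y$. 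Substituting the single-form bound $\mathrm{GM}_\x\le\big(\tfrac{k_1!}{(n/2+k_1)^{k_1}}\big)^{1-(\frac{2k_1-1}{n+2k_1-1})^2}$ and its $\y$-analogue, using that the exponent $D_\x D_\y/(D_\x D_\y-1)\ge1$ only decreases the product (both factors being $\le1$), and simplifying, yields the asserted upper bound; the extra cross term $(\tfrac{2k_1}{n+2k_1-2}\tfrac{2k_2}{m+2k_2-2})^2$ in $\alpha_{2k_1,2k_2}$ is exactly what is gained because $\cM$ omits only the single one-dimensional summand $\Span u$ rather than being a product of two genuine hyperplane sections. The main obstacle is this last step: the weighted geometric mean is naturally controlled by a minimum of two exponents assembled from $D_\x$, $D_\y$ and Blekherman's single-form exponents, and coaxing that into the clean closed form $\alpha_{2k_1,2k_2}$ while keeping every constant explicit is where the real work lies — the reduction and the lower bound are routine once Lemma \ref{pomozna} is available.
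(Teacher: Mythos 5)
Your reduction of the volume ratio to $\det\bigl(\tfrac1c T|_{\cM}\bigr)^{1/D_{\cM}}$ and your lower bound coincide with the paper's proof: it likewise derives $T(\cL^\ast)=\cL^\ast_d$ from Lemma \ref{pomozna}, notes that $\tfrac1c T$ fixes $(\sum_i x_i^2)^{k_1}\otimes(\sum_j y_j^2)^{k_2}$ and hence preserves $\cM$, reads off the eigenvalues $c_{jk}$ on the modules $r_\x^{k_1-j}H_{n,2j}\otimes s_\y^{k_2-k}H_{m,2k}$ by applying \cite[Lemma 7.4]{Blek3} to each tensor factor, and bounds the weighted geometric mean from below by its smallest term, the eigenvalue on $H_{n,2k_1}\otimes H_{m,2k_2}$. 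That half of your proposal is complete and correct.

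The upper bound is a genuine gap, and not only because you defer the final combination. First, your $\mathrm{GM}_\x=\prod_j(\mu_j/c_1)^{d_j/D_\x}$ is \emph{not} the quantity bounded in \cite[Lemma 5.2]{Blek1}: his ratio carries exponents $d_j/(D_\x-1)$, so $\mathrm{GM}_\x$ equals it raised to the power $(D_\x-1)/D_\x<1$ and is strictly larger; already for $n=3$, $k_1=1$ one has $\mathrm{GM}_\x=(2/5)^{5/6}>(2/5)^{15/16}$, so the inequality you import fails for your normalization. Second, even granting the two one-variable bounds, the concluding simplification is false: for $n=m=3$, $k_1=k_2=1$ their product is $(2/5)^{15/16}\cdot(2/5)^{15/16}=(2/5)^{15/8}\approx0.179$, whereas the asserted right-hand side is $(4/25)^{695/648}\approx0.140$. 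The paper's route is different -- it keeps only the single factor $c_{k_1k_2}^{D_H/D_{\cM}}$ coming from $H_{n,2k_1}\otimes H_{m,2k_2}$ and then compares $D_H/D_{\cM}$ with $\alpha_{2k_1,2k_2}$. You should be aware, however, that the obstacle you flag in your last sentence appears to be real rather than technical: in the same example the exact eigenvalue product is $(2/5)^{10/35}(4/25)^{25/35}=(2/5)^{12/7}\approx0.208$, which already exceeds $0.140$ (note that $\alpha_{2,2}=695/648>1$ here), so the printed exponent $\alpha_{2k_1,2k_2}$ cannot be reached along these lines at all; what the eigenvalue analysis actually delivers is the exponent $D_H/D_{\cM}$, or any \emph{lower} bound for it such as the quantity $C$ occurring in the paper's proof.
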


\begin{proof}
	From Lemma \ref{pomozna} \ref{pom-pt1} it follows that
for all $f,g\in \RR[\x]_{2k_1}\otimes \RR[\y]_{2k_2}$,
	$\left\langle f, g\right\rangle \geq 0$ if and only if $\left\langle Tf, g\right\rangle_d\geq 0$. Therefore, $T$ maps $\cL^\ast$ to $\cL^\ast_d$.
By Lemma \ref{pomozna} it follows that
for all $f,g\in \RR[\x]_{2k_1}\otimes \RR[\y]_{2k_2}$,
	$\left\langle f, g\right\rangle_d \geq 0$ if and only if 
	$\left\langle T^{-1} f, g\right\rangle\geq 0$, where $T^{-1}$ is the inverse of $T$.
	Therefore, $T$ maps $\cL^\ast$ onto $\cL^\ast_d$,
		\begin{equation} \label{action-of-T}
			T(\cL^{\ast})=\cL^{\ast}_d.
		\end{equation}
	Let $\Delta_{\x}=\sum_{i=1}^n \frac{\partial^2}{\partial x_i^2}$
	(resp.\ $\Delta_{\y}=\sum_{j=1}^m \frac{\partial^2}{\partial y_j^2}$) be the Laplace differential
	operator on $\RR[\x]$ (resp.\ $\RR[\y]$).
	Then $\RR[\x]_{2k_1}$ (resp.\ $\RR[\y]_{2k_2}$) splits into irreducible $\SO(n)$-modules
	(resp.\ $\SO(m)$-modules) \cite[Chapter IX \S 2]{Vil},
		$$\RR[\x]_{2k_1}=\oplus_{i=0}^{k_1} r_{\x}^{i} H_{n,2k_1-2i},\quad (\text{resp.\ } \RR[\y]_{2k_2}=\oplus_{j=0}^{k_2} s_{\y}^{j} H_{m,2k_2-2j}),$$
	where
	$$r_{\x}=\sum_{i=1}^n x_i^2\quad \text{and}\quad
		H_{n,2i}=\left\{ r\in \RR[\x]_{2i} \colon \Delta_{\x} r= 0 \right\}$$
	(resp.\ $s_{\y}=\sum_{j=1}^m y_j^2$ and
		$H_{m,2j}=\left\{ s\in \RR[\y]_{2j} \colon \Delta_{\y} s= 0 \right\}$).
	Then the $\SO(n)\times \SO(m)$-module $\RR[\x]_{2k_1}\otimes \RR[\y]_{2k_2}$ splits into submodules as follows:
	 	$$\RR[\x]_{2k_1}\otimes \RR[\y]_{2k_2}=\oplus_{i=0}^{k_1} \oplus_{j=0}^{k_2} (r_{\x}^{i} H_{n,2k_1-2i}\otimes s_{\y}^{j} H_{m,2k_2-2j}).$$
	By Lemma \ref{pomozna} \ref{pom-pt3},
		$$T\left( r_\x^{k_1}\otimes s_\y^{k_2}\right)=
			c\cdot \left( r_\x^{k_1}\otimes s_\y^{k_2}\right),$$
	where
		$$c=\frac{\Gamma(k_1+\frac{1}{2}) \Gamma(\frac{n}{2})  }{\sqrt{\pi}
		\Gamma(k_1+\frac{n}{2})}
		\frac{\Gamma(k_2+\frac{1}{2}) \Gamma(\frac{m}{2})  }{\sqrt{\pi} \Gamma(k_2+\frac{m}{2})}.$$
	Since $\frac{1}{c}T$ commutes with the action of $\SO(n)\times \SO(m)$ and fixes
	$r_\x^{k_1}\otimes s_\y^{k_2}$, it also fixes the orthogonal complement of 
	$ r_\x^{k_1}\otimes s_\y^{k_2}$, which is the hyperplane
	of all biforms with integral 0 on $T$. Using this and $(\ref{action-of-T})$, we conclude that
	$\frac{1}{c}T$ maps $\widetilde{\cL^\ast}$
	to $\widetilde{\cL_d^\ast}$. Applying \cite[Lemma 7.4]{Blek3} componentwise for $\frac{1}{c}T$ we have that
		\begin{eqnarray*}
			\frac{1}{c}T\left(\sum_\ell f_{\ell 1}\otimes f_{\ell 2}\right)
			&=&	\sum_\ell \left(\sum_{j=0}^{k_1}\sum_{k=0}^{k_2} c_{jk} \ell^{\x}_{2j}(f_{\ell 1})\otimes \ell^{\y}_{2k}(f_{\ell 2})\right)\\
			&=&	\sum_{j=0}^{k_1}\sum_{k=0}^{k_2} c_{jk} \left(\sum_\ell \ell^{\x}_{2j}(f_{\ell 1})\otimes \ell^{\y}_{2k}(f_{\ell 2})\right),
		\end{eqnarray*}
	where
		$$c_{jk}=\frac{k_1!\ \Gamma(k_1+\frac{n}{2})}{(k_1-j)!\ \Gamma(k_1+j+\frac{n}{2})} \frac{k_2!\ \Gamma(k_2+\frac{m}{2})}{(k_2-k)!\ \Gamma(k_2+k+\frac{m}{2})}$$
	and $\ell^{\x}_{2j}(f_{\ell 1})$ (resp.\ $\ell^{\y}_{2k}(f_{\ell 2})$)  denotes the orthogonal projection of $f_{\ell 1}$ to $r_{\x}^{k_1-j} H_{n,2j}$ (resp.\ $f_{\ell 2}$ to $s_{\y}^{k_2-k} H_{m,2k}$).
	Note that
	$c_{k_1k_2}$ is the smallest among the coefficients $c_{jk}$ and 
	the lower bound on the
	change in volume is
	 	$$\left(\frac{ \Vol \widetilde{\cL^\ast_d}}{\Vol \widetilde{\cL^\ast }} \right)^{\frac{1}{D_{\cM}}} \geq
			\frac{k_1!\ \Gamma(k_1+\frac{n}{2})}{\Gamma(2k_1+\frac{n}{2})}\frac{k_2!\ \Gamma(k_2+\frac{m}{2})}{\Gamma(2k_2+\frac{m}{2})}=c_{k_1k_2}.$$
	Estimate
		$$\frac{k_1!\ \Gamma(k_1+\frac{n}{2})}{\Gamma(2k_1+\frac{n}{2})}\frac{k_2! \ \Gamma(k_2+\frac{m}{2})}{\Gamma(2k_2+\frac{m}{2})}\geq
			\frac{k_1!}{(\frac{n}{2}+2k_1)^{k_1}}\frac{k_2!}{(\frac{m}{2}+2k_2)^{k_2}}.
		$$
	This proves the lower bound in Lemma \ref{cone-L}.

	To prove the upper bound in Lemma \ref{cone-L} observe that the largest coefficient of contraction occurs in the submodule $H_{n,2k_1}\otimes H_{m,2k_2}$ which has dimension
		\begin{eqnarray*}
			D_H
			&=& (\dim \RR[\x]_{2k_1}-\dim \RR[\x]_{2k_1-2})
				(\dim \RR[\y]_{2k_2}-\dim \RR[\y]_{2k_2-2})\\
			&=& \Big(\binom{n+2k_1-1}{2k_1}-\binom{n+2k_1-3}{2k_1-2}\Big)  \cdot
				\Big(\binom{m+2k_2-1}{2k_2}-\binom{m+2k_2-3}{2k_2-2}\Big).
		\end{eqnarray*}
	The dimension $D_\cM$ of the ambient space $\cM$ is
		$$D_{\cM}=\binom{n+2k_1-1}{2k_1} \binom{m+2k_2-1}{2k_2}-1.$$
	We have
		$$D_H
			= \binom{n+2k_1-1}{2k_1}\binom{m+2k_2-1}{2k_2} \cdot C,$$
	where
 		$$C=\left(1-\frac{2k_1-1}{n+2k_1-2}\frac{2k_1}{n+2k_1-1}\right)
			\left(1-\frac{2k_2-1}{m+2k_2-2}\frac{2k_2}{m+2k_2-1}\right).$$
	Thus
		$$D_H=D_{\cM}\cdot C+ C \underbrace{<}_{C<1} D_{\cM}\cdot C+ 1.$$
	If $k_1>1$ or $k_2>1$, then
		\begin{eqnarray*}
		\frac{D_{H}}{D_{\cM}}< C+\frac{1}{D_{\cM}}
		&<& C+\frac{1}{\binom{n+2k_1-1}{2} \binom{m+2k_2-1}{2}}\\
		&=&
		C+\frac{4}{(n+2k_1-1)(n+2k_1-2)(m+2k_2-1)(m+2k_2-2)}\\
		&<& \alpha_{2k_1,2k_2},
		\end{eqnarray*}
	where $\alpha_{2k_1,2k_2}$ is as in the statement of Lemma \ref{cone-L}. On the other hand, if $k_1=k_2=1$,
	then
	  \begin{equation*}
	  \frac{D_{H}}{D_{\cM}}<C+\frac{4}{n(n+1)m(m+1)-4}<C+\frac{8}{n(n+1)m(m+1)}<\alpha_{2,2}.
	  \end{equation*}
	Estimating $c_{k_1k_2}$ from above gives
		$$c_{k_1k_2}=\frac{k_1!\ \Gamma(k_1+\frac{n}{2})}{\Gamma(2k_1+\frac{n}{2})}\frac{k_2!\ \Gamma(k_2+\frac{m}{2})}{\Gamma(2k_2+\frac{m}{2})}\leq
			\frac{k_1!}{(\frac{n}{2}+k_1)^{k_1}}\frac{k_2!}{(\frac{m}{2}+k_2)^{k_2}},
		$$
	which concludes the proof of the lemma.
\end{proof}

\begin{lemma}\label{lemma-sq}
		The dual cone $\Sq_d^\ast$ to the cone of sums of squares
		$\Sq^{(n,m)}_{(2k_1,2k_2)}$ in the apolar inner product is contained in the cone of sums of
		squares $\Sq^{(n,m)}_{(2k_1,2k_2)}$,
			$$\Sq_d^\ast\subseteq \Sq^{(n,m)}_{(2k_1,2k_2)}.$$
\end{lemma}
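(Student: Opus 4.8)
The plan is to reduce the statement, via the operator $T$ of Lemma~\ref{pomozna}, to a fact about the dual cone of $\Sq^{(n,m)}_{(2k_1,2k_2)}$ with respect to the $L^2$ inner product. Write
$$\Sq^{\ast}=\left\{f\in\RR[\x]_{2k_1}\otimes\RR[\y]_{2k_2}\colon\langle f,s\rangle\geq 0\ \text{for all }s\in\Sq^{(n,m)}_{(2k_1,2k_2)}\right\}.$$
Since $T$ is bijective (Lemma~\ref{pomozna}\ref{pom-pt2}) and $\langle Tf,g\rangle_d=(2k_1)!\,(2k_2)!\,\langle f,g\rangle$ for all biforms $f,g$ (Lemma~\ref{pomozna}\ref{pom-pt1}), one gets $\langle g,s\rangle_d=(2k_1)!\,(2k_2)!\,\langle T^{-1}g,s\rangle$ for all $g$ and $s$; as $(2k_1)!\,(2k_2)!>0$, this gives at once
$$\Sq_d^{\ast}=T\big(\Sq^{\ast}\big).$$
Thus the lemma will follow once we show $T(\Sq^{\ast})\subseteq\Sq^{(n,m)}_{(2k_1,2k_2)}$.

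Next I would unwind the definition~(\ref{definicija-T_v}) of $T_v$ and the analogous definitions of $T^u$ and $T$; together with bilinearity they give, for every $g$, the integral representation
$$T(g)=\int_{S^{n-1}\times S^{m-1}}g(v,u)\,\Lambda_{v,u}^2\;\dd\sigma_1(v)\,\dd\sigma_2(u),$$
where $\Lambda_{v,u}:=\big(\sum_{i=1}^n v_ix_i\big)^{k_1}\big(\sum_{j=1}^m u_jy_j\big)^{k_2}\in\RR[\x,\y]_{k_1,k_2}$ and we used $\big(\sum_i v_ix_i\big)^{2k_1}\big(\sum_j u_jy_j\big)^{2k_2}=\Lambda_{v,u}^2$. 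Note that even though each $\Lambda_{v,u}^2$ is a square of a biform of bidegree $(k_1,k_2)$, the weight $g(v,u)$ need not be nonnegative on $T$ (membership of $g$ in $\Sq^{\ast}$ is far weaker than pointwise nonnegativity on $T$), so $T(g)\in\Sq^{(n,m)}_{(2k_1,2k_2)}$ is not immediate.

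To finish, fix $g\in\Sq^{\ast}$ and the monomial basis $\{m_a\}$ of $\RR[\x,\y]_{k_1,k_2}$. Expanding $\Lambda_{v,u}=\sum_a c_a(v,u)\,m_a$ (each $c_a$ a binomial coefficient times a monomial in $(v,u)$), squaring and integrating gives $T(g)=\sum_{a,b}M_{ab}\,m_am_b$ with the symmetric matrix $M_{ab}=\int_T g\,c_ac_b\,\dd\sigma$. The key point is that for each coefficient vector $\xi=(\xi_a)$ there is a biform $P_\xi\in\RR[\x,\y]_{k_1,k_2}$ with $P_\xi(v,u)=\sum_a\xi_a c_a(v,u)$, so that
$$\xi^\top M\xi=\int_{S^{n-1}\times S^{m-1}}g(v,u)\,P_\xi(v,u)^2\,\dd\sigma_1(v)\,\dd\sigma_2(u)=\langle g,P_\xi^2\rangle\geq 0,$$
because $P_\xi^2\in\Sq^{(n,m)}_{(2k_1,2k_2)}$ and $g\in\Sq^{\ast}$; since $\xi\mapsto P_\xi$ is a linear bijection (the binomial coefficients are nonzero), $M\succeq 0$. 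A spectral decomposition $M=\sum_\ell\lambda_\ell\,\eta^{(\ell)}(\eta^{(\ell)})^\top$ with $\lambda_\ell\geq 0$ then rewrites $T(g)=\sum_\ell\big(\sqrt{\lambda_\ell}\sum_a\eta^{(\ell)}_am_a\big)^2$, a finite sum of squares of biforms in $\RR[\x,\y]_{k_1,k_2}$; hence $T(g)\in\Sq^{(n,m)}_{(2k_1,2k_2)}$.

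The main obstacle is this last step, $M\succeq 0\Rightarrow T(g)\in\Sq^{(n,m)}_{(2k_1,2k_2)}$: one must recognize the coefficient array $M$ of $T(g)$ as a Gram matrix of $T(g)$ in the monomial basis, and that this Gram matrix is tested precisely against the squares $P_\xi^2$ of biforms in $\RR[\x,\y]_{k_1,k_2}$ — which is exactly what the integral formula for $T$ makes visible. The identity $\Sq_d^{\ast}=T(\Sq^{\ast})$ and the integral representation of $T$ are then just bookkeeping with Lemma~\ref{pomozna} and the definitions.
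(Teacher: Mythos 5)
Your proof is correct, but it takes a genuinely different route from the paper's. You first transfer the problem to the $L^2$ side via the identity $\Sq_d^\ast=T(\Sq^\ast)$ (which the paper itself records, for arbitrary cones $\cL$, as \eqref{action-of-T} in the proof of Lemma \ref{cone-L}), then exploit the integral representation $T(g)=\int_T g(v,u)\,\Lambda_{v,u}^2\,\dd\sigma$ coming from \eqref{definicija-T_v} to exhibit an explicit Gram matrix $M_{ab}=\int_T g\,c_ac_b\,\dd\sigma$ of $T(g)$, whose positive semidefiniteness is exactly the statement that $g$ pairs nonnegatively with squares $P_\xi^2$; a spectral decomposition then finishes. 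The paper instead stays entirely in the apolar inner product: it forms the quadratic form $H_f(p)=\langle p^2,f\rangle_d$, which is psd for $f\in\Sq_d^\ast$, decomposes it into rank-one forms $A_{q_k}$, and uses the orthogonal projection identity \eqref{projekcija} onto the span of the $H_g$ (proved via the spanning property \eqref{powers-of-lin-forms} of powers of linear forms) to conclude $f=\binom{2k_1}{k_1}^{-1}\binom{2k_2}{k_2}^{-1}\sum_k q_k^2$. The two arguments are dual faces of the same moment-matrix/Gram-matrix correspondence — indeed $H_{Tg}(p)=(2k_1)!\,(2k_2)!\int_T g\,p^2\,\dd\sigma$ is, up to constants and a diagonal change of basis, your matrix $M$ — but yours avoids the projection computation and produces the psd Gram matrix directly, while the paper's yields the explicit constant in the resulting sum-of-squares decomposition. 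One small remark: the bijectivity of $\xi\mapsto P_\xi$ is not actually needed; all that matters is that each $\xi$ yields some $P_\xi\in\RR[\x,\y]_{k_1,k_2}$ with $P_\xi^2\in\Sq^{(n,m)}_{(2k_1,2k_2)}$.
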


\begin{proof}
 	Let $W$ be the space of quadratic forms on $\RR[\x]_{k_1}\otimes \RR[\y]_{k_2}$.
	 For $A,B\in W$ with the corresponding symmetric matrices
	$\mathscr M_A$ and $\mathscr M_B$ with respect to an orthonormal basis 
	 for the apolar differential inner product, we define the inner product of $A,B$ by
		$$\left\langle A,B \right\rangle = \text{tr }(\mathscr M_A \mathscr M_B).$$
	For $q\in \RR[\x]_{k_1}\otimes \RR[\y]_{k_2}$, let $A_q$ be the rank one quadratic form given by
		$$A_q(p)=\left\langle p,q \right\rangle_d^2\quad \text{for }
			p\in \RR[\x]_{k_1}\otimes \RR[\y]_{k_2}.$$
	For any $B\in W$ we have
		$$\left\langle A_q,B \right\rangle=B(q).$$
	For $f\in \RR[\x]_{2k_1}\otimes \RR[\y]_{2k_2}$, let $H_f$ be the quadratic form on
	$\RR[\x]_{k_1}\otimes \RR[\y]_{k_2}$ given by
		\begin{equation} \label{def-H}
			H_f(p)=\left\langle p^2,f \right\rangle_d\quad \text{for }
			p\in \RR[\x]_{k_1}\otimes \RR[\y]_{k_2}.
		\end{equation}
	If $f\in \Sq_d^\ast$, then $H_f$ is positive semidefinite by definition. Therefore it can be written as a finite nonnegative linear combination of forms of rank 1,
		$$H_f=\sum_{k} A_{q_k}\quad \text{for some } q_k\in \RR[\x]_{k_1}\otimes \RR[\y]_{k_2}.$$
	Let $W_1$ be the subspace of $W$ given by the linear span of the forms $H_g$ with
	$g\in \RR[\x]_{2k_1}\otimes \RR[\y]_{2k_2}$.
	Let $P_{W_1}$
	be the orthogonal projection
	onto $W_1$. We claim that
		\begin{equation}\label{projekcija}
			P_{W_1}(A_{q_k})=\binom{2k_1}{k_1}^{-1}\binom{2k_2}{k_2}^{-1} H_{q_k^2}.
		\end{equation}
	From
		$$\left\{h\in \RR[\x]_{2k_1}\colon h(v)=0\text{ for all }v\in \RR^n\right\}=
			\left\{0\right\},$$
	it follows by \cite[equality (1.9)]{Rez92} that 
		$$\Span\left\{v^{2k_1}\colon v\in \RR^n\right\}=\RR[\x]_{2k_1}.$$
	Analogously 
	$\Span\left\{u^{2k_2}\colon u\in \RR^m\right\}=\RR[\y]_{2k_2}.$
	Thus 
		\begin{equation} \label{powers-of-lin-forms}
			\Span\left\{v^{2k_1}\otimes u^{2k_2}\colon v\in \RR^n, u\in \RR^m\right\}=				\RR[\x]_{2k_1}\otimes \RR[\y]_{2k_2}.
		\end{equation}
	To establish (\ref{projekcija}) it suffices to show that
		$A_{q_k}-\binom{2k_1}{k_1}^{-1}\binom{2k_2}{k_2}^{-1} H_{q_k^2}$
	is orthogonal to the forms
		$H_{v^{2k_1}\otimes u^{2k_1}}$
	since these span $W_1$.
	We observe that
		\begin{eqnarray*}
			H_{v^{2k_1}\otimes u^{2k_1}}(p)
				&=&(2k_1)!\ (2k_2)!\ p(u,v)^2=
				\frac{(2k_1)!\ (2k_2)!\ A_{v^{k_1}\otimes u^{k_2}}(p)}{(k_1!\ k_2!)^2}\\
				&=&\binom{2k_1}{k_1}\binom{2k_2}{k_2} A_{v^{k_1}\otimes u^{k_2}}(p).
		\end{eqnarray*}
	Therefore
		\begin{align*}
		\left\langle A_{q_k}-\binom{2k_1}{k_1}^{-1}\binom{2k_2}{k_2}^{-1} H_{q_k^2}, H_{v^{2k_1}\otimes u^{2k_1}}\right\rangle  &
		= H_{v^{2k_1}\otimes u^{2k_1}}(q_k)-
		\left\langle H_{q_k^2}, A_{v^{k_1}\otimes u^{k_1}}\right\rangle \\
		& = H_{v^{2k_1}\otimes u^{2k_1}}(q_k)-
		H_{q_k^2}(v^{k_1}\otimes u^{k_1})=
		0.
		\end{align*}
	Hence,
		$$H_f=P_{W_1}(\sum_k A_{q_k})\stackrel{\eqref{projekcija}}{=}
			\sum_k \binom{2k_1}{k_1}^{-1}\binom{2k_2}{k_2}^{-1} H_{q_k^2}=
		H_{	\binom{2k_1}{k_1}^{-1}\binom{2k_2}{k_2}^{-1}  \sum_k q_k^2},$$
	and 
		$$H_{f-\binom{2k_1}{k_1}^{-1}\binom{2k_2}{k_2}^{-1}  \sum_k q_k^2}\equiv 0.$$
	From (\ref{def-H}) it follows that
		\begin{equation} \label{def-H-used}
			\left\langle p^2, f-\binom{2k_1}{k_1}^{-1}\binom{2k_2}{k_2}^{-1}  \sum_k q_k^2 \right\rangle_d=0\quad \text{for all }
			p\in \RR[\x]_{k_1}\otimes \RR[\y]_{k_2}.
		\end{equation}
	In particular, by the equality (\ref{powers-of-lin-forms}), the linear span of the squares of
	forms from
	$\RR[\x]_{k_1}\otimes \RR[\y]_{k_2}$ is the whole space
	$\RR[\x]_{2k_1}\otimes \RR[\y]_{2k_2}$. Therefore (\ref{def-H-used})
	implies that 
		$$
			\left\langle g, f-\binom{2k_1}{k_1}^{-1}\binom{2k_2}{k_2}^{-1}  \sum_k q_k^2 \right\rangle_d=0\quad \text{for all }
			g\in \RR[\x]_{2k_1}\otimes \RR[\y]_{2k_2},$$
	and hence 
		$f$ is a sum of squares,
		$$f=\binom{2k_1}{k_1}^{-1}\binom{2k_2}{k_2}^{-1}  \sum_k q_k^2.$$
	This concludes the proof of Lemma \ref{lemma-sq}.
\end{proof}

\subsection{Sums of squares biforms}

\label{sos-subsec} 

In this subsection we establish the bounds for the volume of the section of sums of squares biforms. The main result is as follows.

\begin{theorem} \label{squares}
	For integers $n,m\geq 3$ we have
		$$d_{2k_1,2k_2}
			\leq
			\left(\frac{\Vol \widetilde{\Sq}^{(n,m)}_{(2k_1,2k_2)}}{\Vol B_\cM}\right)^{\frac{1}{D_{\cM}}}
			\leq e_{2k_1,2k_2},$$
	where
		\begin{eqnarray*}
		d_{2k_1,2k_2} &=&
			\left\{\begin{array}{lr}
				 \frac{1}{2^8\sqrt{6}} \frac{\sqrt{nm+n+m}}{(n+4)(m+4)},& \text{if } k_1=k_2=1\\
				\frac{(k_1!\ k_2!)\cdot \sqrt{k_1!\ k_2!}}{2\sqrt{6}\cdot4^{2k_1+2k_2}\cdot \sqrt{(2k_1)!\ (2k_1)!}} \frac{  n^{\frac{k_1}{2}}  m^{\frac{k_2}{2}}  }{(\frac{n}{2}+2k_2)^{k_1} (\frac{m}{2}+2k_2)^{k_2}},&
						 \text{otherwise,}
			\end{array}\right.\\
		e_{2k_1,2k_2}&=&
			\left\{\begin{array}{lr}
				2^{10}\sqrt{6} \cdot \frac{1}{\sqrt{nm+n+m}},& \text{if } k_1=k_2=1\\
				2\sqrt{6}\cdot 4^{2k_1+2k_2}\cdot \sqrt{\frac{(2k_1)!\ (2k_2)!}{k_1!\ k_2!}} \cdot
					n^{-\frac{k_1}{2}} m^{-\frac{k_2}{2}},& \text{otherwise,}
			\end{array}\right.
		\end{eqnarray*}
\end{theorem}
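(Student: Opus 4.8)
The plan is to follow Blekherman's treatment of the form case \cite[\S 5]{Blek1}, now fed by the biform versions of the apolar machinery established above: Lemma~\ref{pomozna} (the operator $T$ intertwining the $L^2$ and apolar inner products, with $T(e)=ce$ for $e:=r_\x^{k_1}\otimes s_\y^{k_2}$), Lemma~\ref{cone-L} (the volume distortion caused by $T$ on sections of dual cones), and Lemma~\ref{lemma-sq} ($\Sq_d^\ast\subseteq\Sq$, where $\Sq:=\Sq^{(n,m)}_{(2k_1,2k_2)}$). Write $\Sq^\ast$, $\Sq_d^\ast$ for the $L^2$- and apolar-dual cones of $\Sq$, and $\widetilde{\Sq^\ast}$, $\widetilde{\Sq_d^\ast}$ for their sections inside $\cM$. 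Two preliminary observations are in order. Since $e|_T\equiv 1$, Lemma~\ref{pomozna} yields $\langle e,f\rangle=\int_T f$ and $\langle e,f\rangle_d=\tfrac1c\langle Te,f\rangle_d=\tfrac{(2k_1)!(2k_2)!}{c}\int_T f$, so $\cM$ is the orthogonal complement of $e$ for \emph{both} inner products; a routine computation then identifies the polar of a section with minus the section of the dual cone, $(\widetilde{\Sq})^\circ=-\widetilde{\Sq^\ast}$, hence $\Vol(\widetilde{\Sq})^\circ=\Vol\widetilde{\Sq^\ast}$. Also $e$ is the unique $\SO(n)\times\SO(m)$-fixed point of $\widetilde{\Sq}$, so (exactly as for $\widetilde{\Pos}$ in the proof of Theorem~\ref{lower-bound}) the origin is its Santal\'o point, and the Blaschke--Santal\'o inequality \cite{MP} gives $\Vol\widetilde{\Sq}\cdot\Vol\widetilde{\Sq^\ast}\le(\Vol B_\cM)^2$. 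Throughout, the estimate splits into the two announced cases, $k_1=k_2=1$ being handled with the sharper constants recorded in Lemma~\ref{cone-L} for that case.

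The crux is a two-sided estimate for $\Vol\widetilde{\Sq_d^\ast}$, obtained by comparison with the cone of positive semidefinite matrices. As in the proof of Lemma~\ref{lemma-sq}, the map $f\mapsto H_f$ with $H_f(p)=\langle p^2,f\rangle_d$ is a linear bijection of $\RR[\x,\y]_{2k_1,2k_2}$ onto a subspace $W_1$ of the space $W$ of quadratic forms on $\RR[\x]_{k_1}\otimes\RR[\y]_{k_2}$; it carries $\Sq_d^\ast$ onto the intersection of $W_1$ with the psd cone of $W$, while by \eqref{projekcija} the orthogonal projection $W\to W_1$ carries that psd cone onto $\binom{2k_1}{k_1}^{-1}\binom{2k_2}{k_2}^{-1}H(\Sq)$. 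Thus $\Vol\widetilde{\Sq_d^\ast}$ equals, up to an explicit Jacobian, the volume of the section cut by $W_1$ of the psd cone of $W$; here $\dim W=\binom{N+1}{2}$ with $N=\binom{n+k_1-1}{k_1}\binom{m+k_2-1}{k_2}$. The codimension of $W_1$ in $W$ is absorbed by a Rogers--Shephard type section-versus-projection inequality, whose combinatorial factor is only a constant raised to the power $D_\cM$ (since $\dim W$ and $\dim W_1=D_\cM+1$ have bounded ratio), leaving a genuine section of the psd cone of an $N$-dimensional space, whose volume is bounded above and below by the harmonic-analysis estimates for positive semidefinite matrices of \cite{Barv-Blek} (cf.\ \cite[\S 5]{Blek1}). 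This last input contributes $N^{\mp1/2}\asymp(k_1!\,k_2!)^{\pm1/2}\,n^{\mp k_1/2}m^{\mp k_2/2}$ --- exactly the powers of $n$ and $m$ occurring in $d_{2k_1,2k_2}$ and $e_{2k_1,2k_2}$.

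Assembling the pieces: the lower bound on $\Vol\widetilde{\Sq}$ follows from $\Vol\widetilde{\Sq}\ge\Vol\widetilde{\Sq_d^\ast}$ (Lemma~\ref{lemma-sq}) together with the lower estimate above, and the upper bound follows by transferring that estimate from $\widetilde{\Sq_d^\ast}$ to $\widetilde{\Sq^\ast}$ via $\Sq^\ast=T^{-1}(\Sq_d^\ast)$ and Lemma~\ref{cone-L} applied with $\cL=\Sq$ (admissible, since $e\in\Int\Sq$ and every nonzero sum of squares biform has positive integral on $T$), and then inserting this into $\Vol\widetilde{\Sq}\le(\Vol B_\cM)^2/\Vol\widetilde{\Sq^\ast}$. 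In both directions the $T$-distortion contributes the factor $\tfrac{k_1!}{(\frac{n}{2}+2k_1)^{k_1}}\tfrac{k_2!}{(\frac{m}{2}+2k_2)^{k_2}}$ from Lemma~\ref{cone-L}, while the binomials $\binom{2k_i}{k_i}\le 4^{k_i}$ enter through \eqref{projekcija}; collecting all constants yields $d_{2k_1,2k_2}$ and $e_{2k_1,2k_2}$, and re-running the computation for $k_1=k_2=1$ with the refined inputs gives the slightly stronger $d_{2,2}$, $e_{2,2}$ (which is what ultimately feeds Corollary~\ref{verjetnost-intro}). I expect the main obstacle to be quantitative rather than conceptual: carrying \emph{explicit} constants through the chain Blaschke--Santal\'o $\to$ Rogers--Shephard $\to$ psd-cone volume bounds $\to$ $T$-distortion while keeping the $L^2$ and apolar Euclidean structures on $\cM$ correctly related at every step, and in particular handling the fact that $W_1$ is a \emph{proper} subspace of $W$, so that $\Sq_d^\ast$ is isomorphic only to a section of the psd cone, not to the psd cone itself, with the check that the section-versus-projection factor stays bounded per dimension.
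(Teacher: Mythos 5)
Your scaffolding (the dualities $(\widetilde{\Sq})^\circ=-\widetilde{\Sq^\ast}$, the Blaschke--Santal\'o step justified by $\SO(n)\times\SO(m)$-invariance, Lemma~\ref{cone-L} to pass between $\widetilde{\Sq^\ast}$ and $\widetilde{\Sq_d^\ast}$, and Lemma~\ref{lemma-sq} to get $\widetilde{\Sq_d^\ast}\subseteq\widetilde{\Sq}$) is sound and overlaps with the paper. The genuine gap is in your central engine: the claim that $\Vol\widetilde{\Sq_d^\ast}$ can be read off from the volume of the full psd cone of $W$ because ``the codimension of $W_1$ in $W$ is absorbed by a Rogers--Shephard type section-versus-projection inequality.'' This does not work for two reasons. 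First, Rogers--Shephard type inequalities bound the \emph{product} $\Vol_k(K\cap E)\cdot\Vol_{d-k}(P_{E^\perp}K)$ in terms of $\Vol_d(K)$; to isolate the section volume you would need two-sided control of the projection $P_{E^\perp}K$ of the psd cone onto the orthogonal complement of $W_1$, which is an unaddressed quantity of essentially the same difficulty as the original problem. Second, $\codim_W W_1$ is a positive \emph{fraction} of $\dim W\approx\binom{D_\cU+1}{2}$ (since $\dim W_1/\dim W\to 2(k_1!k_2!)^2/((2k_1)!(2k_2)!)<1$), and a section of a convex body by a subspace of proportional codimension can have per-dimension volume radius wildly different from that of the ambient body; indeed, the smallness of sums-of-squares cones relative to their psd parametrizations is exactly a phenomenon of this kind, so one cannot assume the section inherits the $(\dim W)^{-1/4}$ asymptotics of the whole psd cone. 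A third, fixable but unaddressed, issue is that $f\mapsto H_f$ is not close to an isometry between $(\cM,L^2)$ and $(W_1,\mathrm{tr})$: its singular values spread over a factor of order $D_\cU$ across the irreducible $\SO(n)\times\SO(m)$-components, so the ``explicit Jacobian'' itself requires the full Lemma~\ref{cone-L}-type analysis.

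The paper avoids this entirely. For the upper bound it uses Urysohn's inequality to reduce to the average width, bounds the support function by the norm $\|f\|_{\sq}=\max_{g\in S_\cU}|\langle f,g^2\rangle|=\|H_f\|_\infty$, and then estimates $\int_{S_\cM}\|H_f\|_\infty\,\dd\widetilde\mu$ directly via $\|H_f\|_\infty\le 2\sqrt3\,\|H_f\|_{2D_\cU}$ and the moment estimates of \cite{Barv-Blek} --- crucially using the Reverse H\"older inequality (Lemma~\ref{bask-prod}) to control $\|g^2\|_2\le 4^{2(k_1+k_2)}\|g^2\|_1$; this is the source of the factor $4^{2k_1+2k_2}$ in $e_{2k_1,2k_2}$, which your outline never produces (your $\binom{2k_i}{k_i}\le 4^{k_i}$ is a different occurrence). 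For the lower bound it shows $B_{\sq}=(\Sq')^\circ\cap-(\Sq')^\circ$ has volume radius at least $e_{2k_1,2k_2}^{-1}$ by the gauge-integral formula plus H\"older and Jensen, deduces the same for $\widetilde{\Sq^\ast}$, and only then applies Lemma~\ref{cone-L} and Lemma~\ref{lemma-sq}. So the hard analytic input is an average of a support/gauge function over $S_\cM$, never a volume comparison between a psd cone and its section by $W_1$. As a secondary point, even if your psd-section step were repaired, routing the upper bound through the $\alpha_{2k_1,2k_2}$-exponent side of Lemma~\ref{cone-L} would produce extra factors of the form $\bigl(k_i!/(\tfrac n2+k_1)^{k_1}\cdots\bigr)^{\alpha-1}$ and would not reproduce the stated constants.
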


Blekherman \cite[Theorem 6.1]{Blek1} established
volume bounds for sum of squares forms.
Our proof freely borrows from his ideas.
An important ingredient in the proof will also be the
following version of the Reverse H\"older inequality.

\begin{lemma} \label{bask-prod}
	For a biform $g\in \RR[\x,\y]_{k_1,k_2}$ of bidegree $(k_1,k_2)$ we have
		$$\left(  \int_T g^2 \; \dd\sigma \right)^{\frac{1}{2}}=
		\left\| g \right\|_2\leq 4^{k_1+k_2}\left\|g\right\|_1 =4^{k_1+k_2} \left( \int_T g\; \dd\sigma\right).$$
\end{lemma}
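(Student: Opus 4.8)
The plan is to reduce $g$ --- via homogeneity --- to an ordinary \emph{form} of degree $d:=k_1+k_2$ in the $n+m$ coordinates, invoke the reverse H\"older inequality for forms, and then transfer the estimate from the round sphere $S^{n+m-1}$ to $T=S^{n-1}\times S^{m-1}$. Since $g(tx,ty)=t^{k_1+k_2}g(x,y)$ for $(x,y)\in\RR^n\times\RR^m$ and $t\in\RR$, the biform $g$ is homogeneous of degree $d$ in all $n+m$ variables, so the reverse H\"older inequality for forms (a standard fact, provable by the harmonic analysis / convexity techniques of \cite{Blek1,Barv-Blek}) yields a constant $C_d$ depending only on $d$ with $\|g\|_{L^2(S^{n+m-1})}\le C_d\|g\|_{L^1(S^{n+m-1})}$, and one has $C_d\le 4^d=4^{k_1+k_2}$.

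Next I would record the exact relation between the two families of norms. Writing a uniform point of $S^{n+m-1}$ as $(ru,\sqrt{1-r^2}\,v)$ with $u\in S^{n-1}$, $v\in S^{m-1}$ uniform and independent and $r^2$ Beta$(\tfrac n2,\tfrac m2)$-distributed, homogeneity of $g$ gives, for every $p\ge1$,
\[
\|g\|_{L^p(S^{n+m-1})}^p=B_p\,\|g\|_{L^p(T)}^p,
\qquad
B_p=\frac{B\bigl(\tfrac n2+\tfrac{pk_1}{2},\ \tfrac m2+\tfrac{pk_2}{2}\bigr)}{B\bigl(\tfrac n2,\tfrac m2\bigr)},
\]
where $B(\cdot,\cdot)$ is the Euler Beta function. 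Hence $\|g\|_{L^2(T)}=B_2^{-1/2}\|g\|_{L^2(S^{n+m-1})}$ and $\|g\|_{L^1(T)}=B_1^{-1}\|g\|_{L^1(S^{n+m-1})}$, and combining with the reverse H\"older estimate gives
\[
\|g\|_{L^2(T)}\ \le\ C_d\,\frac{B_1}{\sqrt{B_2}}\;\|g\|_{L^1(T)}.
\]

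The decisive --- and slickest --- step is that $B_1/\sqrt{B_2}\le1$, so that all dimension dependence cancels and only the degree-dependent constant $C_d\le4^{k_1+k_2}$ remains. This is one application of the Cauchy--Schwarz inequality to the Beta$(\tfrac n2,\tfrac m2)$ expectation: writing $t^{k_1/2}(1-t)^{k_2/2}=\bigl(t^{k_1}(1-t)^{k_2}\bigr)^{1/2}\cdot 1$ and integrating against the Beta density shows $B_1\le B_2^{1/2}$. Substituting, $\|g\|_2=\|g\|_{L^2(T)}\le4^{k_1+k_2}\|g\|_{L^1(T)}=4^{k_1+k_2}\|g\|_1$, which is the claim; the trailing identity $\|g\|_1=\int_T g\,\dd\sigma$ in the statement is just the definition of $\|\cdot\|_1$ in the case $g\ge0$ on $T$, which is the one relevant for the applications.

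I expect the only genuinely non-elementary ingredient to be the reverse H\"older inequality for forms; if it must be proved here rather than cited, the route is the moment-body / isotropic-position analysis of \cite{Barv-Blek} already in play in this section. Everything else is the norm-transfer bookkeeping above together with the one-line Cauchy--Schwarz collapse of the Beta ratio. An alternative to working in $n+m$ variables is to apply the form inequality blockwise --- each $g(\cdot,y)$ is a form of degree $k_1$ in $\x$ and each $g(x,\cdot)$ a form of degree $k_2$ in $\y$ --- and to glue the two one-block estimates with Minkowski's integral inequality applied to $x\mapsto\int_{S^{m-1}}|g(x,y)|\,\dd\sigma_2(y)$; this again produces the constant $C_{k_1}C_{k_2}\le4^{k_1+k_2}$.
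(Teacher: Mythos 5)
Your main argument is correct but takes a genuinely different route from the paper. The paper stays on the product sphere and iterates the one\-/sphere reverse H\"older inequality of Duoandikoetxea \cite[Corollary 3]{Duoa} over the two factors of $T$: for each fixed $x$ it bounds $\bigl(\int_{S^{m-1}}g^2(x,y)\,\dd\sigma_2(y)\bigr)^{1/2}$ by $4^{k_2}\int_{S^{m-1}}g(x,y)\,\dd\sigma_2(y)$, squares and integrates in $x$, and then applies the same inequality with constant $4^{k_1}$ to the degree-$k_1$ form $x\mapsto\int_{S^{m-1}}g(x,y)\,\dd\sigma_2(y)$; no Minkowski integral inequality is needed, since the gluing is just this two-step iteration. (So your closing ``alternative'' is essentially the paper's proof, except that the second step is applied to the $y$-average rather than combined via Minkowski.) Your primary route instead treats $g$ as a single form of degree $k_1+k_2$ on $S^{n+m-1}$, applies the reverse H\"older inequality once there, and transfers norms back to $T$ via the polar decomposition with $r^2\sim\mathrm{Beta}(\tfrac n2,\tfrac m2)$; the Cauchy--Schwarz collapse $B_1\le\sqrt{B_2}$ is right and makes the dimensional factors cancel, reproducing exactly the constant $4^{k_1+k_2}$. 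What your route buys is the structural observation that the product-sphere $L^2/L^1$ comparison is never worse than the full-sphere one; what the paper's route buys is brevity and immediate generalization to multiforms by further iteration. Two minor points: the ``reverse H\"older inequality for forms'' with dimension-free constant $4^d$ should be cited as \cite[Corollary 3]{Duoa} (the same input the paper uses), not attributed to \cite{Blek1,Barv-Blek}; and both your write-up and the paper's statement identify $\|g\|_1$ with $\int_T g\,\dd\sigma$, which requires $g\ge0$ on $T$ --- harmless here since the lemma is only invoked for squares, as you correctly note.
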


\begin{proof}
	By definition,
		\begin{equation}\label{rev-hol-1}
			\int_T g^2  \dd\sigma=\int_{x\in S^{n-1}}
			\left(\int_{y\in S^{m-1}} g^2(x,y) \;\dd\sigma_2(y)\right) \;\dd\sigma_1(x).
		\end{equation}
	For every fixed $x\in S^{n-1}$, $g^2(x,\y)$ is a polynomial in $\y$ of degree $2k_2$.
	By the Reverse H\"older inequality \cite[Corollary 3]{Duoa} used for $p=1$, $k=k_2$, $P_k(\y)=g(x,\y)$ we obtain
		\begin{equation}\label{rev-hol-2}
			\left(\int_{S^{m-1}}{ g^2(x,y)\; \dd\sigma_2(y)}\right)^{\frac{1}{2}}\leq
			4^{k_2}\left(\int_{S^{m-1}}{ g(x,y)\; \dd\sigma_2(y)}\right),
		\end{equation}
	for each $x\in S^{n-1}$.
	Hence using (\ref{rev-hol-2}) in (\ref{rev-hol-1}) we have
		\begin{equation} \label{rev-hol-3}
			\int_T g^2 \; \dd\sigma \leq
			4^{2k_2}\int_{x\in S^{n-1}}
			\left(\int_{y\in S^{m-1}} g(x,y) \;\dd\sigma_2(y)\right)^2 \;\dd\sigma_1(x).
		\end{equation}
	The expression $\int_{y\in S^{m-1}} g(\x,y) \;\dd\sigma_2(y)$ is a polynomial in $\x$ of degree $k_1$.
	Using the Reverse H\"older inequality \cite[Corollary 3]{Duoa} for $p=1$, $k=k_1$, $P_k(\x)=\int_{\y\in S^{m-1}} g(\x,y) \;\dd\sigma_2(y)$,
		\[
				\left(\int_{x\in S^{n-1}}\left(\int_{y\in S^{m-1}} g(x,y) \;\dd\sigma_2(y)\right)^2 \dd\sigma_1(x)\right)^{\frac{1}{2}}\leq  4^{k_1}\left( \int_{S^{n-1}}\int_{S^{m-1}} g(x,y)\:\dd\sigma_2(y)\dd\sigma_1(x) \right).
		\]
	Using this in (\ref{rev-hol-3}) we get
		\begin{eqnarray*}
			\int_T g^2 \; \dd\sigma
				&\leq&  4^{2k_1+2k_2}\left(\int_{x\in S^{n-1}} \int_{y\in S^{m-1}} g(x,y) \;\dd\sigma_2(y) \;\dd\sigma_1(x)\right)^2\\
				&=& 4^{2k_1+2k_2} \left\| g\right\|_1^2.
		\end{eqnarray*}
	Taking square roots concludes the proof of the lemma.
\end{proof}

\subsubsection{Proof of the upper bound in Theorem \ref{squares}}
	We write $\widetilde{\Sq}=\widetilde{\Sq}^{(n,m)}_{(2k_1,2k_2)}$.
	We define the support function
		$L_{\widetilde{\Sq}}$ of $\widetilde{\Sq}$ by
		$$L_{\widetilde{\Sq}}:\cM\to \RR,\quad
			L_{\widetilde{\Sq}}(f)=\max_{g\in \widetilde{\Sq}} \left\langle f,g\right\rangle.$$
	The average width $W_{\widetilde{\Sq}}$ of $\widetilde{\Sq}$ is given by
		\begin{equation}\label{width-1}
			W_{\widetilde{\Sq}}=2\int_{S_{\cM}} L_{\widetilde{\Sq}}\; \dd\widetilde\mu.
		\end{equation}
	By Urysohn's inequality \cite[p.\ 318]{Sch} applied to $\widetilde{\Sq}$ we have
		\begin{equation}\label{Urysohn-estimate}
			\left(\frac{\Vol \widetilde{\Sq}}{\Vol B_{\cM}}\right)^{\frac{1}{D_{\cM}}}\leq
			\frac{W_{\widetilde{\Sq}}}{2}.
		\end{equation}
	Let $S_\cU$ be the unit sphere in $\cU:=\RR[\x,\y]_{k_1, k_2}$ equipped with the 
	$L^2$ norm, i.e., 	
	\begin{equation*}\label{measure'}
		\left\| g \right\|_2^2= \int_{T} |g|^2\ \dd\sigma=
		\int_{x\in S^{n-1}}\left(\int_{y\in S^{m-1}} |g(x,y)|^2 \;\dd\sigma_2(y)\right) \;\dd	
		\sigma_1(x).
	\end{equation*} 
	The extreme points of $\widetilde{\Sq}$ are of the form
		$$g^2-(\sum_{i=1}^n x_i^2)^{k_1}(\sum_{j=1}^m y_j^2)^{k_2}\quad
			\text{where }\; g\in \RR[\x,\y]_{k_1, k_2}
			\quad \text{and}\quad \int_{T}g^2\dd\sigma=1.$$
	For $f\in \cM$,
		$$\left\langle f,(\sum_{i=1}^n x_i^2)^{k_1}(\sum_{j=1}^m y_j^2)^{k_2} \right\rangle=
			\int_{T}f\dd\sigma=0,$$
	and thus
		$$L_{\widetilde{\Sq}}(f)=\max_{g\in S_{\cU}} \langle f,g^2 \rangle.$$
	Let $\|\;\|_{\sq}$ be the norm on $\RR[\x,\y]_{2k_1, 2k_2}$ defined by
		$$\|f\|_{\sq}=\max_{g\in S_{\cU}} |\langle f,g^2\rangle|,$$
	Clearly,
		\begin{equation}\label{width-2}
			L_{\widetilde{\Sq}}(f)\leq \|f\|_{\sq}.
		\end{equation}
	Using (\ref{width-1}), (\ref{Urysohn-estimate}) and (\ref{width-2}) we deduce
		$$\left(\frac{\Vol \widetilde{\Sq}}{\Vol B_\cM}\right)^{\frac{1}{D_{\cM}}}\leq
			\int_{S_{\cM}} \left\|f\right\|_{\sq} \dd\widetilde\mu.$$
	To prove the upper bound in Theorem \ref{squares} it now suffices to prove the following claim.\\

	\noindent \textbf{Claim:} $\ds\int_{S_{\cM}} \left\|f\right\|_{\sq} \dd\widetilde\mu \leq e_{2k_1,2k_2}.$\\

	For $f\in \RR[\x,\y]_{2k_1, 2k_2}$ let $H_f$ be the quadratic form on $\RR[\x,\y]_{k_1, k_2}$ defined by
		$$H_f(g)=\langle f, g^2\rangle\quad\text{for all }g\in \RR[\x,\y]_{k_1, k_2}.$$
	Note that
		$$\|f\|_{\sq}=\|H_{f}\|_{\infty}.$$
	Here $\|H_{f}\|_{\infty}$ stands for the supremum norm of 	$H_f$ on the unit sphere $S_\cU$.
	Let $\widehat\mu$ be the $\SO(n)\times \SO(m)$-invariant probability measure on $S_\cU$.
	The $L^{2p}$ norm of $H_f$ for a positive integer
	$p$ is defined by
		$$\|H_f\|_{2p}:=\left(\int_{S_{\cU}} H_f^{2p}(g)\dd\widehat\mu\right)^{\frac{1}{2p}}.$$
	We will bound $\|H_{f}\|_{\infty}$ by a $L^{2p}$ norm of $H_f$ for $p$ high enough.
	Note that
	$H_f$ is a form of degree 2 in the vector space $\cU$ of dimension $D_\cU$. By the proof of
	\cite[Theorem 4.2]{Blek1} we have that
		$$\|H_{f}\|_{\infty}\leq 2\sqrt{3}\|H_{f}\|_{2D_\cU}.$$
	It suffices to estimate
		$$A=\int_{S_\cM} \|H_{f}\|_{2D_\cU}\dd\widetilde\mu=
			\int_{S_\cM} \left(\int_{S_\cU} \langle f,g^2\rangle^{2D_\cU} \dd\widehat{\mu}(g)\right)^{\frac{1}{2D_\cU}}\dd\widetilde\mu(f).$$
	Applying H\"older's inequality and interchanging the order of integration we obtain
		\begin{equation}\label{A-estimate}
			A\leq \left(\int_{S_\cU} \int_{S_\cM} \langle f,g^2\rangle^{2D_\cU}
				\dd\widetilde\mu(f)\dd\widehat{\mu}(g)\right)^{\frac{1}{2D_\cU}}.
		\end{equation}
	We estimate  the inner integral as follows:
		\begin{eqnarray}
			\int_{S_{\cM}} \left\langle f, g^2\right\rangle^{2 D_{\cU}}\; \dd\widetilde\mu(f)
			&=&	\int_{S_{\cM}} \left\langle f, \pr_\cM (g^2)\right\rangle^{2 D_{\cU}}\; \dd\widetilde\mu(f)
				\nonumber\\
			&=& \left\| \pr_\cM (g^2)\right\|_2^{2D_\cU}
				\int_{S_{\cM}} \left\langle f, \frac{\pr_\cM(g^2)}{ \left\| \pr_\cM(g^2)\right\|_2}\right\rangle^{2 D_{\cU}}\; \dd\widetilde\mu(f) \nonumber \\
			&\leq& \left\|g^2\right\|_2^{2D_\cU}
				\int_{S_{\cM}} \left\langle f, \frac{\pr_\cM(g^2)}{ \left\| \pr_\cM(g^2)\right\|_2}\right\rangle^{2 D_{\cU}}\; \dd\widetilde\mu(f) \label{inner}
		\end{eqnarray}
	where $\pr_\cM(g^2)$ denotes the projection of $g^2$ into $\cM$.
	Observe that
		$$\left\| g\right\|_2=1.$$
	By Lemma \ref{bask-prod} used for $g^2$ it follows that
		$$\left\| g^2\right\|_2\leq 4^{2k_1+2k_2}.$$
	Using this in (\ref{inner}) we get
		\begin{equation}\label{rev-hold-est-1}
			\int_{S_{\cM}} \left\langle f, g^2\right\rangle^{2 D_{\cU}}\; \dd\widetilde\mu(f) \leq
			4^{4(k_1+k_2)D_{\cU}} \int_{S_{\cM}} \left\langle f,\frac{\pr_\cM(g^2)}{ \left\| \pr_\cM(g^2)\right\|}
			\right\rangle^{2 D_{\cU}}\; \dd\widetilde\mu(f).
		\end{equation}
	As in \cite[p.\ 376]{Blek1} we estimate
		\begin{equation}\label{Blek-est}
			\int_{S_{\cM}} \left\langle f, \frac{\pr_\cM(g^2)}{ \left\| \pr_\cM(g^2)\right\|_2}
			\right\rangle^{2 D_{\cU}}\; \dd\widetilde\mu(f)
			\leq \left(\sqrt{\frac{2D_{\cU}}{D_{\cM}}}\right)^{2D_{\cU}}.
		\end{equation}
	Combining (\ref{rev-hold-est-1}) and (\ref{Blek-est}) we see
		\begin{equation}\label{Blek-est-2}
			\int_{S_{\cM}} \left\langle f, g^2\right\rangle^{2 D_{\cU}}\; \dd\widetilde\mu(f) \leq
			4^{4(k_1+k_2)D_{\cU}} \left(\sqrt{\frac{2D_{\cU}}{D_{\cM}}}\right)^{2D_{\cU}}.
		\end{equation}
	Using (\ref{Blek-est-2}) in (\ref{A-estimate}) we obtain
		$$A\leq 4^{2(k_1+k_2)} \cdot\sqrt{\frac{2D_{\cU}}{D_{\cM}}}\cdot
			\int_{S_{\cU}}1\;\dd\widehat{\mu}(g)=4^{2(k_1+k_2)} \cdot\sqrt{\frac{2D_{\cU}}{D_{\cM}}}.$$
	To prove the Claim it remains to establish
		\begin{equation}\label{ostanek}
			\sqrt{\frac{2D_{\cU}}{D_{\cM}}} \leq \frac{e_{2k_1,2k_2}}{2\sqrt{3}\cdot 4^{2(k_1+k_2)}}.
		\end{equation}
	The dimensions $D_{\cU}$, $D_\cM$ are easily verified to be
		\begin{eqnarray*}
			D_{\cU} &=& \dim \RR[\x,\y]_{k_1,k_2}=
				\binom{n+k_1-1}{k_1} \binom{m+k_2-1}{k_2},\\
			D_{\cM} &=& \dim \RR[\x,\y]_{2k_1,2k_2}-1=
			 	\binom{n+2k_1-1}{2k_1} \binom{m+2k_2-1}{2k_2}-1.
		\end{eqnarray*}
	We distinguish two cases depending on $k_1$, $k_2$.\\

	\noindent \textbf{Case 1:} $k_1=k_2=1$.
	Observe that
		 \begin{eqnarray*}
		 \frac{2D_{\cU}}{D_{\cM} } &=&
		 \frac{2^3 \cdot nm}{n(n+1)m(m+1)-4} = \frac{2^3}{(n+1)(m+1)-\frac{4}{nm}}\\
			   &\underbrace{<}_{n,m\geq 3}&  \frac{2^3}{(n+1)(m+1)-1}= \frac{2^3}{nm+n+m}.
		\end{eqnarray*}
	\noindent \textbf{Case 2:} $k_1>1$ or $k_2>1$.\\

	Note that
		$$D_{\cM }>  \Big(\binom{n+2k_1-1}{2k_1}-1\Big)\cdot\Big(\binom{m+2k_2-1}{2k_2}-1\Big).$$
	Hence
		\begin{equation} \label{2D_U/D_M}
			 \sqrt{\frac{2D_{\cU}}{D_{\cM} } }
			 <	\sqrt{2\frac{\binom{n+k_1-1}{k_1}}{\binom{n+2k_1-1}{2k_1}-1} \frac{\binom{m+k_2-1}{k_2}}{\binom{m+2k_2-1}{2k_2}-1}}.
		\end{equation}
	Using (\ref{2D_U/D_M}) together with the estimates
		\begin{equation} \label{2D_U/D_M-est}
			\frac{ \binom{n+k_1-1}{k_1} }{\binom{n+2k_1-1}{2k_1}-1}<
			\frac{(2k_1)!}{k_1!} n^{-k_1} \quad \text{(resp.}\quad
			\frac{\binom{m+k_2-1}{k_2}}{\binom{m+2k_2-1}{2k_2}-1}<
			\frac{(2k_2)!}{k_2!} m^{-k_2}),
		\end{equation}
	which we prove below,
	it follows that
			 $$\sqrt{\frac{2D_{\cU}}{D_{\cM} } }
			 < \sqrt{2\frac{(2k_1)! (2k_2)!}{k_1! k_2!}} n^{-\frac{k_1}{2}} m^{-\frac{k_2}{2}}.$$
		This proves \eqref{ostanek} and establishes the upper bound in Theorem \ref{squares}.
	Hence it only remains to prove (\ref{2D_U/D_M-est}). We have
		\begin{eqnarray*}
			\frac{ \binom{n+k_1-1}{k_1} }{\binom{n+2k_1-1}{2k_1}-1} 
			&\underbrace{\leq}_{\binom{n+2k_1-1}{2k_1}\geq \binom{n+1}{2}}&
			\frac{\binom{n+k_1-1}{k_1} }{\frac{n^2+n-2}{n^2+n} \binom{n+2k_1-1}{2k_1}}=
			\frac{(2k_1)!\cdot n(n+1)\cdot (n+k_1-1)!}{(k_1)! \cdot (n-1)(n+2)\cdot (n+2k_1-1)! }\\
			&\underbrace{\leq}_{k_1\geq 1}&
			\frac{(2k_1)!\cdot n(n+1)}{(k_1)! \cdot (n-1)(n+2)(n+1)n^{k_1-1}}
			\underbrace{<}_{n\geq 3}
			\frac{(2k_1)!}{(k_1)!} n^{-k_1},
		\end{eqnarray*}
	which proves (\ref{2D_U/D_M-est}).  \hfill\qedsymbol

\subsubsection{Proof of the lower bound in Theorem \ref{squares}}
	Let $B_{\sq}$ be the unit ball of the $\|\;\|_{\sq}$ norm
		$$B_{\sq}=\left\{f\in \cM\colon \|f\|_{\sq}\leq 1\right\}=
			\left\{f\in \cM\colon \max_{g\in S_{\cU}}
				\left|\left\langle f,g^2\right\rangle\right| \leq 1\right\},$$
	where $S_{\cU}$ stands for the unit sphere in $\cU:=\RR[\x,\y]_{k_1,k_2}$ equipped with the $L^2$ norm.
	By the Claim in the proof of the upper bound of Theorem \ref{squares}, we have
		\begin{equation} \label{sq-norm-est}
			\int_{S_{M}} \left\|f\right\|_{\sq}\dd\widetilde\mu \leq e_{2k_1,2k_2}.
		\end{equation}
	By \cite[p.\ 91]{14},
		\begin{equation*} \label{expression-gauge-2}
			\left( \frac{\Vol B_{\sq}}{\Vol B_{\cM}} \right)^{\frac{1}{D_{\cM}}}=
			\left( \int_{S_{\cM}}G_{B_{\sq}}^{-D_{\cM}} \dd\widetilde\mu   \right)^{\frac{1}{D_{\cM}}},
		\end{equation*}
	where $G_{B_{\sq}}$ is the gauge of $B_{\sq}$ in $\cM$.
	Observe that 
		$$G_{B_{\sq}}(f)=\left\|f\right\|_{\sq}.$$
	\noindent By H\"older's inequality we have
			$$\left( \int_{S_{\cM}}G_{B_{\sq}}^{-D_{\cM}} \dd\widetilde\mu   \right)^{\frac{1}{D_{\cM}}}\geq
				\int_{S_\cM} G_{B_{\sq}}^{-1}\dd\widetilde\mu,$$
	and so
		$$\left(\frac{\Vol B_{\sq}}{\Vol B_\cM}\right)^{\frac{1}{D_\cM}}\geq
			\int_{S_\cM} G_{B_{\sq}}^{-1}\dd\widetilde\mu.$$
	By Jensen's inequality
	(applied to the convex function $y=\frac{1}{x}$ on $\RR_{>0}$),
		$$\int_{S_\cM} G_{B_{\sq}}^{-1}\dd\widetilde\mu \geq \left(\int_{S_\cM} G_{B_{\sq}}\dd\widetilde\mu\right)^{-1}.$$
	Therefore using (\ref{sq-norm-est}) we have
		\begin{equation} \label{ball-estimate}
			\left( \frac{\Vol B_{\sq}}{\Vol B_\cM} \right)^{\frac{1}{D_\cM}}\geq
			e_{2k_1,2k_2}^{-1}.
		\end{equation}
	
	Let $\left(\Sq'\right)^\circ$ be the polar dual of $\Sq':=\left(\Sq^{(n,m)}_{(2k_1,2k_2)}\right)'$.
	 By definition,
		$$\left(\Sq'\right)^\circ
			=\left\{f\in \cM\colon \langle f,g\rangle\leq 1\quad\text{for all }g\in \Sq'\right\}.$$
		
	\noindent \textbf{Claim:} $B_{\sq}=\left(\Sq'\right)^\circ \bigcap -\left(\Sq'\right)^\circ.$\\
			
	First we prove the inclusion $(\subseteq)$ in the Claim. Let us take 
	$f\in B_{\sq}$ and show that $f\in \left(\Sq'\right)^\circ \bigcap -\left(\Sq'\right)^\circ$.
	By definition of $\left(\Sq'\right)^\circ$ we have to prove that
		\begin{equation}\label{to-be-proved}
			\left|\langle f,h\rangle\right|\leq 1\quad\text{for all }h\in \Sq'.
		\end{equation}
	By assumption $f\in B_{\sq}$ we have
		\begin{equation}\label{def-B-sq}
			\left|\left\langle f,g^2\right\rangle\right| \leq 1\quad\text{for all }
			g\in S_{\RR[\x,\y]_{k_1,k_2}}.
		\end{equation}
	Notice that every $h=\sum_{i=1}^k h_i^2\in \left(\Sq'\right)^\circ$ can be written as
		$h=\sum_{i=1}^k \lambda_i \left(\frac{h_i}{\sqrt{\lambda_i}}\right)^2$, where
	$\lambda_i=\int_T h_i^2\dd\sigma$ and $\sum_{i=1}^k \lambda_i=1$.
	Since $\frac{h_i}{\sqrt{\lambda_i}}\in S_{\RR[\x,\y]_{k_1,k_2}}$,
	it follows by (\ref{def-B-sq}) that 
	$\left|\left\langle f,\frac{h_i}{\sqrt{\lambda_i}}\right\rangle\right|\leq 1$.
	Hence $\left|\left\langle f,h\right\rangle\right|\leq 1$.
	This proves (\ref{to-be-proved}).
		
		The inclusion $(\supseteq)$ in the Claim is trivial (since the definition (\ref{def-B-sq}) is 
	a special case of the definition (\ref{to-be-proved})).\\
	
	Let $\Sq^\ast$ be the dual cone of $\Sq$
	in the $L^2$ inner product,
		$$\Sq^\ast=\{f\in \RR[\x,\y]_{2k_1,2k_2}\colon
			\langle f, g\rangle\geq 0 \quad \text{for all }g\in \Sq\},$$
	and let $\widetilde{\Sq^\ast}$ be the set
		\begin{eqnarray*}
			\widetilde{\Sq^\ast}
			&=& \left\{ f\in \RR[\x,\y]_{2k_1,2k_2}\colon f+(\sum_i x_i^2)^{k_1}(\sum_j y_j^2)^{k_2}\in
				\Sq^\ast \bigcap\; \mathcal{H}^{(n,m)}_{(2k_1,2k_2)}\right\}\\
			&=& \left\{ f\in \cM\colon \langle f+(\sum_i x_i^2)^{k_1}(\sum_j y_j^2)^{k_2},g\rangle\geq 0
				\quad \text{for all }g\in \Sq\right\}\\
			&=&   \left\{ f\in \cM\colon \langle f+(\sum_i x_i^2)^{k_1}(\sum_j y_j^2)^{k_2},g\rangle\geq 0
				\quad \text{for all }g\in \Sq'\right\}\\
			&=&   \left\{ f\in \cM\colon \langle f,g\rangle\geq -1
				\quad \text{for all }g\in \Sq'\right\}\\
			&=&   \left\{ f\in \cM\colon \langle -f,g\rangle\leq 1
				\quad \text{for all }g\in \Sq'\right\}\\
			&=& -\left(\Sq'\right)^\circ,
		\end{eqnarray*}
	where the second equality follows by definitions of $\Sq^\ast$ and $\mathcal{H}^{(n,m)}_{(2k_1,2k_2)}$,
	the third by homogeneity of the inner product and the forth by
		$\left\langle (\sum_i x_i^2)^{k_1}(\sum_j y_j^2)^{k_2},g\right\rangle=1$ for $g\in \Sq'$.
	Using $\widetilde{\Sq^\ast}=-\left(\Sq'\right)^\circ$
	together with (\ref{ball-estimate}) and the Claim we get
		\begin{equation}\label{estimate-5}
			\left(\frac{\Vol \widetilde{\Sq^\ast}}{\Vol B_{\cM}}\right)^{\frac{1}{D_{\cM}}}
			\geq e_{2k_1,2k_2}^{-1}.
		\end{equation}

	Since $(\sum_i x_i^2)^{k_1}(\sum_j y_j^2)^{k_2}$ is in the interior of $\Sq_{(2k_1,2k_2)}^{(n,m)}$ and for all non-zero
	$f\in \Sq_{(2k_1,2k_2)}^{(n,m)}$ we have $\int_T f\dd\sigma>0$,
	it follows by Lemma \ref{cone-L} that
		\begin{equation}\label{using-cone-L}
			\left(\frac{\Vol \widetilde{\Sq_d^\ast}}{\Vol \widetilde{\Sq^\ast}} \right)^{\frac{1}{D_{\cM}}} \geq \frac{k_1!}{(\frac{n}{2}+2k_1)^{k_1}}\frac{k_2!}{(\frac{m}{2}+2k_2)^{k_2}},
		\end{equation}
	where
		$$\widetilde{\Sq_d^\ast}
			= \left\{ f\in \RR[\x,\y]_{2k_1,2k_2}\colon f+(\sum_i x_i^2)^{k_1}(\sum_j y_j^2)^{k_2}\in \Sq_d^\ast\; \bigcap\; \mathcal{H}^{(n,m)}_{(2k_1,2k_2)}\right\},$$
	and $\Sq_d^\ast\subset \RR[\x,\y]_{2k_1,2k_2}$ is the dual cone in the apolar inner product of $\Sq_{(2k_1,2k_2)}^{(n,m)}$.
	Combining (\ref{estimate-5}) and  (\ref{using-cone-L}) we obtain
		$$\left(\frac{\Vol \widetilde{\Sq_d^\ast}}{\Vol B_\cM}\right)^{\frac{1}{D_{\cM}}} \geq
			e_{2k_1,2k_2}^{-1}
			\frac{k_1!}{(\frac{n}{2}+2k_1)^{k_1}}\frac{k_2!}{(\frac{m}{2}+2k_2)^{k_2}}.
			$$
	By Lemma \ref{lemma-sq},  $\widetilde{\Sq_d^\ast}\subseteq \widetilde{\Sq_{(2k_1,2k_2)}^{(n,m)}}$ and the lower bound of Theorem \ref{squares} is proved.
\hfill\qedsymbol

\subsection{Extension of the results to symmetric multiforms}\label{ext-to-sym-multiforms}

Let $\FF\in \{\RR,\CC\}$ and $\FF[\z,\overline{\z},\w,\overline{\w}]$ be the vector space of polynomials over $\FF$ in the complex variables $\z:=(z_1,\ldots,z_n)$ and
$\w:=(w_1,\ldots,w_m)$, equipped with conjugation as the involution $\ast$ (in case $\FF=\RR$ the involution is trivial on the coefficients). 
Let $\symm\FF[\z,\overline{\z},\w,\overline{\w}]_{1,1,1,1}$ be the real subspace of symmetric multiforms of 
multidegree $(1,1,1,1)$, i.e., 
	$$\symm\FF[\z,\overline{\z},\w,\overline{\w}]_{1,1,1,1}:=
		\left\{\sum_{i,j=1}^n\sum_{k,\ell=1}^m a_{ijk\ell} \overline z_i z_j \overline{w_k} w_\ell\colon a_{ijk\ell}\in \FF,\ 
		a_{ijk\ell}=\overline{a_{ji\ell k}} \text{ for all }i,j,k,\ell\right\}.$$
\begin{remark}\label{dim-remark}
	It is easy to check that the real dimension of $\symm\FF[\z,\overline{\z},\w,\overline{\w}]_{1,1,1,1}$ is $n^2m^2$ for $\FF=\CC$ and 
	$\frac{1}{2}nm(nm+1)$ for $\FF=\RR$.
\end{remark}
Let $\FF[\z,\w]$ stand for the vector subspace of $\FF[\z,\overline{\z},\w,\overline{\w}]$ of polynomials in $\z,\w$, 
and $\FF[\z,\w]_{1,1}$ for the subspace of 
$\FF[\z,\w]$ of bilinear polynomials, i.e., polynomials from 
$\FF[\z,\w]$ that are linear in 
$\z$ and $\w$.
Let
\begin{align*}
	\Pos_{\FF} & = \left\{ f\in \symm\FF[\z,\overline{\z},\w,\overline{\w}]_{1,1,1,1}\colon f(\z,\w)\geq 0\quad \text{for all }(\z,\w)\in\CC^n\times \CC^m \right\},\\
	\Sq_{\FF} &= \left\{f\in \symm\FF[\z,\overline{\z},\w,\overline{\w}]_{1,1,1,1} \colon f=\sum_{r}f_r^\ast f_r\quad \text{for some }f_r\in \FF[\z,\w]_{1,1}\right\},
\end{align*}
be the cone of nonnegative multiforms and the cone of sum of hermitian squares multiforms, respectively. 
Let $\symm \CC[\z,\overline \z,\w,\overline \w]_{1,1}$ stand for the real subspace of $\CC[\z,\overline{\z},\w,\overline{\w}]$ of symmetric bilinear polynomials in 
$(\z,\overline\z)$ and $(\w,\overline\w)$, i.e.,
$$\symm \CC[\z,\overline \z,\w,\overline \w]_{1,1}:=\left\{  \sum_{i=1}^n\sum_{j=1}^m \left(a_{ij}z_iw_j + \overline{a_{ij}}\overline{z_i}\overline{w_j}+b_{ij}\overline{z_i}w_j + \overline{b_{ij}} z_i\overline{w_j}\right) \colon a_{ij}, b_{ij}\in \CC\right\}.$$

\begin{proposition}\label{sos-prop}
	We have
		\begin{equation*}
			\Sq_{\FF} 
				\subseteq\left\{f\in \symm\FF[\z,\overline{\z},\w,\overline{\w}]_{1,1,1,1} \colon f=\sum_{r}f_r^2\quad \text{for some }f_r\in \symm \CC[\z,\overline{\z}, \w,
					\overline{\w}]_{1,1}\right\}.
		\end{equation*}
\end{proposition}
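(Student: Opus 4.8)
The plan is to reduce to a single Hermitian square and invoke the elementary identity $|z|^2=(\re z)^2+(\im z)^2$, read off in the polynomial ring. Take $f\in\Sq_\FF$ and write $f=\sum_r f_r^\ast f_r$ with each $f_r\in\FF[\z,\w]_{1,1}$, say $f_r=\sum_{i,j}c_{ij}^{(r)}z_iw_j$. For each $r$ I would set
$$u_r:=\tfrac12\bigl(f_r+f_r^\ast\bigr),\qquad v_r:=\tfrac1{2i}\bigl(f_r-f_r^\ast\bigr).$$
Since $f_r^\ast=\sum_{i,j}\overline{c_{ij}^{(r)}}\,\overline{z_i}\,\overline{w_j}$, both $u_r$ and $v_r$ lie in $\symm\CC[\z,\overline\z,\w,\overline\w]_{1,1}$: in the defining parametrization one takes $a_{ij}=\tfrac12 c_{ij}^{(r)}$ (resp.\ $a_{ij}=-\tfrac{i}{2}c_{ij}^{(r)}$) and $b_{ij}=0$. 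They are moreover $\ast$-invariant, $u_r^\ast=u_r$ and $v_r^\ast=v_r$.

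The key step is the polynomial identity
$$u_r^2+v_r^2=\tfrac14\bigl(f_r+f_r^\ast\bigr)^2-\tfrac14\bigl(f_r-f_r^\ast\bigr)^2=f_r\,f_r^\ast=f_r^\ast f_r ,$$
which holds in the commutative ring $\CC[\z,\overline\z,\w,\overline\w]$: the parts of $u_r^2$ and $v_r^2$ of $(\z,\overline\z,\w,\overline\w)$-multidegree $(2,0,2,0)$ and $(0,2,0,2)$ cancel against each other, and what survives is precisely the multidegree-$(1,1,1,1)$ term $f_r^\ast f_r$. Summing over $r$,
$$f=\sum_r f_r^\ast f_r=\sum_r\bigl(u_r^2+v_r^2\bigr),$$
so $f$ is a sum of squares of elements of $\symm\CC[\z,\overline\z,\w,\overline\w]_{1,1}$; as $f\in\Sq_\FF\subseteq\symm\FF[\z,\overline\z,\w,\overline\w]_{1,1,1,1}$ by definition, this places $f$ in the right-hand side of the claimed inclusion.

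No genuine obstacle arises here — the statement is essentially bookkeeping — but two points deserve care in the write-up. First, the individual summands $u_r^2$, $v_r^2$ are \emph{not} of multidegree $(1,1,1,1)$; the claim only requires the total $f$ to be of that multidegree (which we already know) together with each $u_r,v_r\in\symm\CC[\z,\overline\z,\w,\overline\w]_{1,1}$, so the cancellation of the off-diagonal multidegrees in the sum is exactly the content. Second, when $\FF=\RR$ the argument is unchanged even though the $c_{ij}^{(r)}$ are real: $v_r$ still requires a genuinely complex coefficient $a_{ij}=-\tfrac{i}{2}c_{ij}^{(r)}$, which is why the right-hand side is phrased with $\symm\CC[\z,\overline\z,\w,\overline\w]_{1,1}$ and not its real analogue.
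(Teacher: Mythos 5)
Your proof is correct and is essentially identical to the paper's: the paper also writes $f_r^\ast f_r = f_{r,\re}^2 + f_{r,\im}^2$ with $f_{r,\re}=\tfrac{f_r+f_r^\ast}{2}$ and $f_{r,\im}=\tfrac{f_r-f_r^\ast}{2i}$, both lying in $\symm\CC[\z,\overline{\z},\w,\overline{\w}]_{1,1}$. Your additional remarks on the multidegree bookkeeping and on why the right-hand side must use $\symm\CC[\z,\overline{\z},\w,\overline{\w}]_{1,1}$ even when $\FF=\RR$ are accurate.
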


\begin{proof}
	The proposition follows by the equality
		$$f^\ast f=f_{\re}^2+f_{\im}^2,$$
	where $f\in \FF[\z,\w]_{1,1}$ and $f_{\re}:=\frac{f+f^\ast}{2}, f_{\im}:=\frac{f-f^\ast}{2i}$ belong to $\symm \CC[\z,\overline \z,\w,\overline \w]_{1,1}$.
\end{proof}

Now we introduce new real variables $\x:=(x_1,\ldots,x_{2n})$ and $\y:=(y_1,\ldots,y_{2m})$ such that 
	$$z_j=x_j+i\cdot x_{n+j},\quad w_k=y_k+i\cdot y_{m+k} \quad\text{for}\quad j=1,\ldots,n, \; k=1,\ldots,m.$$
Under this identification the real vector space $\symm\FF[\z,\overline{\z},\w,\overline{\w}]_{1,1,1,1}$ becomes a subspace of $\RR[\x, \y]_{2,2}$ which we denote by $\cC_\FF$. 
We write $\Pos_{\cC_\FF}\subset\cC_\FF$ and $\Sq_{\cC_\FF}\subset\cC_\FF$ for the images of sets $\Pos_{\FF}$ and $\Sq_{\FF}$, respectively. 
Let $\Pos^{(2n,2m)}_{(2,2)}$ and $\Sq^{(2n,2m)}_{(2,2)}$ be defined as in (\ref{pos-def}) and (\ref{sos-def}), respectively.

\begin{proposition} \label{inclusions}
	We have 
		$$\Pos_{\cC_\FF}=\Pos^{(2n,2m)}_{(2,2)}\cap\;\cC_\FF,
			\quad \Sq_{\cC_\FF}\subseteq \Sq^{(2n,2m)}_{(2,2)}\cap\;\cC_\FF.$$
\end{proposition}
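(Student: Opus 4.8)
The plan is to use the fact that the substitution $z_j=x_j+ix_{n+j}$, $w_k=y_k+iy_{m+k}$ is just the real-coordinate form of the bijection $\Psi\colon\CC^n\times\CC^m\to\RR^{2n}\times\RR^{2m}$ recording real and imaginary parts, and that the identification $\cC_\FF\subseteq\RR[\x,\y]_{2,2}$ is realized by this very substitution. First I would record the elementary observation that if $g\in\symm\FF[\z,\overline{\z},\w,\overline{\w}]_{1,1,1,1}$ corresponds to $\widetilde g\in\cC_\FF$, then $g=\widetilde g\circ\Psi$ as functions on $\CC^n\times\CC^m$, and that the substitution is a ring homomorphism, so it takes sums to sums and products to products. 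Everything below is a consequence of these two facts.

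For the first equality I would prove both inclusions at once. Because $\Psi$ is a bijection, $g$ is nonnegative on $\CC^n\times\CC^m$ if and only if $\widetilde g$ is nonnegative on $\RR^{2n}\times\RR^{2m}$, i.e.\ $g\in\Pos_\FF$ exactly when $\widetilde g\in\Pos^{(2n,2m)}_{(2,2)}$. Since $\Pos_{\cC_\FF}$ is by definition the image of $\Pos_\FF$ (this image lies inside $\cC_\FF$, and $g\mapsto\widetilde g$ is a bijection onto $\cC_\FF$), this gives $\Pos_{\cC_\FF}=\Pos^{(2n,2m)}_{(2,2)}\cap\cC_\FF$.

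For the sums-of-squares inclusion I would start from $f\in\Sq_{\cC_\FF}$, say $f$ is the image of $g\in\Sq_\FF$, and invoke Proposition \ref{sos-prop} to write $g=\sum_r g_r^2$ with $g_r\in\symm\CC[\z,\overline{\z},\w,\overline{\w}]_{1,1}$. The next step is a short computation showing that each $g_r$ maps under the substitution to a real biform of bidegree $(1,1)$ in the $2n+2m$ real variables: a typical summand $a\,z_iw_j+\overline a\,\overline{z_i}\,\overline{w_j}+b\,\overline{z_i}w_j+\overline b\,z_i\overline{w_j}$ equals $2\re(a\,z_iw_j)+2\re(b\,\overline{z_i}w_j)$, and after substituting $z_i=x_i+ix_{n+i}$, $w_j=y_j+iy_{m+j}$ each of $z_iw_j$ and $\overline{z_i}w_j$ becomes a product of a linear form in $\x$ with a linear form in $\y$, whence its real part lies in $\RR[\x,\y]_{1,1}$. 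Since the substitution is multiplicative, $f$ is the image of $\sum_r g_r^2$, namely $f=\sum_r(\widetilde{g_r})^2$ with $\widetilde{g_r}\in\RR[\x,\y]_{1,1}$; hence $f\in\Sq^{(2n,2m)}_{(2,2)}$, while $f\in\cC_\FF$ is automatic. This proves $\Sq_{\cC_\FF}\subseteq\Sq^{(2n,2m)}_{(2,2)}\cap\cC_\FF$.

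The steps are elementary; what needs care — bookkeeping rather than a real obstacle — is keeping the variable counts straight ($\x$ has $2n$ and $\y$ has $2m$ components) and checking that the symmetry conditions defining $\symm\FF[\dots]_{1,1,1,1}$ and $\symm\CC[\dots]_{1,1}$ force real coefficients and the asserted bidegrees once one substitutes (note in particular that individual $(\widetilde{g_r})^2$ need not lie in $\cC_\FF$ — only their sum does, which is exactly the content of $g$ having multidegree $(1,1,1,1)$). The one conceptual point I would be careful about is that the identification $\cC_\FF\cong\symm\FF[\dots]_{1,1,1,1}$ must be set up compatibly with $\Psi$; granting that, there is nothing deeper. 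I would also remark that only the inclusion (not equality) is claimed in the second line, since $\Sq_\FF$ is generated by hermitian squares of conjugate-free bilinear polynomials and so need not exhaust $\Sq^{(2n,2m)}_{(2,2)}\cap\cC_\FF$.
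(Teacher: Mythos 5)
Your proposal is correct and follows essentially the same route as the paper: the first equality is immediate from the bijectivity of the coordinate substitution, and the sums-of-squares inclusion is obtained by invoking Proposition \ref{sos-prop} and checking that $\symm\CC[\z,\overline{\z},\w,\overline{\w}]_{1,1}$ maps into $\RR[\x,\y]_{1,1}$ (the paper records the slightly stronger fact that this map is bijective, but only the inclusion is needed). Your explicit real-part computation and the remark that individual $(\widetilde{g_r})^2$ need not lie in $\cC_\FF$ are correct refinements of the same argument.
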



\begin{proof}
	The equality for $\Pos_{\cC_\FF}$ is clear. 
	The set $\symm \CC[\z,\overline{\z}, \w,\overline{\w}]_{1,1}$ maps bijectively to $\RR[\x,\y]_{1,1}$.
	(Clearly $\symm \CC[\z,\overline{\z}, \w,\overline{\w}]_{1,1}$ maps to  $\RR[\x,\y]_{1,1}$ and by expressing
		$$x_j=\frac{z_j+\overline{z_j}}{2}, \quad x_{n+j}=\frac{z_j-\overline{z_j}}{2i}, \quad y_k=\frac{w_k+\overline{w_k}}{2},\quad y_{m+k}=\frac{w_k-\overline{w_{k}}}{2i}$$
	for $j=1,\ldots,n$, $k=1,\ldots,m$, we see that each element from $\RR[\x,\y]_{1,1}$ comes from an element of $\symm \CC[\z,\overline{\z}, \w,\overline{\w}]_{1,1}$.)
	Therefore the set
		$$\{f\in \symm\FF[\z,\overline{\z},\w,\overline{\w}]_{1,1,1,1} \colon f=\sum_{r}f_r^2\quad \text{for some }f_r\in \symm \CC[\z,\overline{\z}, \w,
					\overline{\w}]_{1,1}\}$$
	maps bijectively to $\Sq^{(2n,2m)}_{(2,2)}\cap\;\cC_\FF$.
	Thus by Proposition \ref{sos-prop} the inclusion $\Sq_{\cC_\FF}\subseteq \symm \FF[\z,\overline{\z}, \w,\overline{\w}]_{1,1,1,1}$ follows.
\end{proof}

Recall the definitions of the product measure $\sigma$ from Subsection \ref{sec-main-res} and the set $\cM^{(n,m)}_{(2,2)}$ from (\ref{M-def}) and replace $(n,m)$ with $(2n,2m)$. We define the vector subspace $\cM_{\cC_\FF}$ of $\cM^{(2n,2m)}_{(2,2)}$ by
	\begin{align*}
		\cM_{\cC_\FF}&:=\cM^{(2n,2m)}_{(2,2)}\cap \cC_\FF,
	\end{align*}
and its sections $\widetilde{\Pos}_{\cC_\FF}$, $\widetilde{\Sq}_{\cC_\FF}$ by 
	\begin{align*}
		\widetilde{\Pos}_{\cC_\FF}&:=\left\{f\in \cM_{\cC_\FF}\colon f+(\sum_{i=1}^{2n}x_i^2)(\sum_{j=1}^{2m} y_j^2)\in \Pos_{\cC_\FF}\right\},\\
		\widetilde{\Sq}_{\cC_\FF}&:=\left\{f\in \cM_{\cC_\FF}\colon f+(\sum_{i=1}^{2n}x_i^2)(\sum_{j=1}^{2m} y_j^2)\in \Sq_{\cC_\FF}\right\}.
	\end{align*}
The subspace $\cM_{\cC_\FF}$ is a Hilbert subspace of $\RR[\x,\y]_{2,2}$ equipped with the $L^2(\sigma)$ inner product and we write $D_{\cM_{\cC_\FF}}$ for its dimension;
so it is isomorphic to $\RR^{\cM_{\cC_\FF}}$ as a Hilbert space.
Let $S_{\cM_{\cC_\FF}}$, $B_{\cM_{\cC_\FF}}$ be the unit sphere and the unit ball in $\cM_{\cC_\FF}$, respectively. 
Let $\mu$ be the (unique w.r.t.\ unitary isomorphism) pushforward of the Lebesgue measure on $\RR^{D_{\cM_{\cC_\FF}}}$ to $\cM_{\cC_\FF}$ (cf.\ Lemma \ref{unique-pushforward}).

The bounds for the volume of the set $\widetilde{\Pos}_{\cC_\FF}$ are as follows.

\begin{theorem}\label{lower-bound-F}
	For 
	integers $n,m\ge 3$ we have:
		$$3^{3} \cdot 10^{-\frac{20}{9}}\cdot 2^{-\frac 12} \max(n,m)^{-\frac{1}{2}}\leq \left(\frac{\Vol \widetilde{\Pos}_{\cC_\FF}}{\Vol B_{\cM_{\cC_\FF}}}\right)^{\frac{1}{D_{\cM_{\cC_\FF}}}}\leq
		2\left(\min\left(\frac{1}{1+n},  \frac{1}{1+m}\right)\right)^{\frac{1}{2}}.$$
\end{theorem}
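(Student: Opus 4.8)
The plan is to transcribe the proof of Theorem~\ref{lower-bound} in the case $k_1=k_2=1$ with $(n,m)$ replaced throughout by $(2n,2m)$, but carried out inside the subspace $\cC_\FF\subseteq\RR[\x,\y]_{2,2}$ (with $\x=(x_1,\dots,x_{2n})$, $\y=(y_1,\dots,y_{2m})$). Write $G=\U(n)\times\U(m)$, realized inside $\SO(2n)\times\SO(2m)$ through the standard real representation. First I would record the structural facts: $\cC_\CC$ and $\cM_{\cC_\CC}$ are $G$-invariant, while $\cC_\RR$ and $\cM_{\cC_\RR}$ are invariant under the subgroup $\SO(n)\times\SO(m)\subseteq G$; by Proposition~\ref{inclusions}, $\widetilde{\Pos}_{\cC_\FF}=\widetilde{\Pos}^{(2n,2m)}_{(2,2)}\cap\cM_{\cC_\FF}$; and for $n,m\ge 3$ the only point of $\cM_{\cC_\FF}$ fixed by the relevant group is the origin, because the invariants of $\RR[\x,\y]_{2,2}$ under $\SO(n)\times\SO(m)$ (resp.\ of $\Sym_{nm}(\RR)$ under conjugation by $\SO(n)\times\SO(m)$), $\RR^n$ being $\SO(n)$-irreducible, reduce to scalar multiples of $(\sum_i x_i^2)(\sum_j y_j^2)$, which has nonzero average on $T=S^{2n-1}\times S^{2m-1}$.

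For the upper bound I would repeat the proof of the upper bound of Theorem~\ref{lower-bound}, replacing $\cM$ by $\cM_{\cC_\FF}$ and taking all polar duals inside $\cM_{\cC_\FF}$. The Blaschke--Santal\'o inequality applies because the origin is the Santal\'o point of $\widetilde{\Pos}_{\cC_\FF}$ (the unique fixed point). From $\widetilde{\Pos}_{\cC_\FF}=\widetilde{\Pos}^{(2n,2m)}_{(2,2)}\cap\cM_{\cC_\FF}$ one gets $B_\infty\cap\cM_{\cC_\FF}=\widetilde{\Pos}_{\cC_\FF}\cap(-\widetilde{\Pos}_{\cC_\FF})$, so the Rogers--Shephard step \cite[Theorem~3]{RS2} goes through verbatim inside $\cM_{\cC_\FF}$. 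The $\x$-gradient estimate $\langle f,f\rangle_{\gr_\x}\ge(1+n)\langle f,f\rangle$ is the $k_1=1$, $2n$-variable instance of \cite[(4.3.1)]{Blek1}, valid for \emph{all} biforms, hence for $f\in\cM_{\cC_\FF}$; together with Kellogg's inequality \cite[Theorem~IV]{Kellog} $\|f\|_\infty\ge\|f\|_{\gr_\x}$ and the observation that $B_{\gr_\x}\cap\cM_{\cC_\FF}$ is an ellipsoid in $\cM_{\cC_\FF}$, this gives $\big(\Vol(B_\infty\cap\cM_{\cC_\FF})^\circ/\Vol B_{\cM_{\cC_\FF}}\big)^{1/D_{\cM_{\cC_\FF}}}\ge(1+n)^{1/2}$, and symmetrically $\ge(1+m)^{1/2}$. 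Combining these with Rogers--Shephard and Blaschke--Santal\'o exactly as in Theorem~\ref{lower-bound} yields $\big(\Vol\widetilde{\Pos}_{\cC_\FF}/\Vol B_{\cM_{\cC_\FF}}\big)^{1/D_{\cM_{\cC_\FF}}}\le 2\big(\min(\tfrac1{1+n},\tfrac1{1+m})\big)^{1/2}$.

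For the lower bound, via the gauge $G_{\cK}(f)=|\min_{v\in T}f(v)|$ together with H\"older's and Jensen's inequalities (as in Theorem~\ref{lower-bound}), it suffices to show $\int_{S_{\cM_{\cC_\FF}}}\|f\|_\infty\,\dd\widetilde\mu\le 3^{-3}\cdot10^{20/9}\cdot2^{1/2}\cdot\max(n,m)^{1/2}$. I would identify $\cC_\CC$ with the restrictions of real-linear functionals on the real vector space $\HH_n\otimes_\RR\HH_m$ of Hermitian tensors to the $G$-orbit of $w=(e_1e_1^\ast)\otimes(f_1f_1^\ast)$ ($e_1\in\CC^n$, $f_1\in\CC^m$ unit vectors; this orbit is identified with $T$ for the purposes of $L^p$-norms, multiforms of bidegree $(1,1,1,1)$ being invariant under the phase actions). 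Since this orbit spans $\cC_\CC\supseteq\cC_\RR$, Barvinok's \cite[Corollary~2]{Barv1} applies to every $f\in\cC_\FF$ and gives $\|f\|_\infty\le D_k^{1/2k}\|f\|_{2k}$, where $D_k=\dim\Span\{g\cdot w^{\otimes k}:g\in G\}=\binom{n+k-1}{k}^2\binom{m+k-1}{k}^2$ by the Hermitian analogue of \cite[p.~404]{Barv1} (the span of $\{(zz^\ast)^{\otimes k}:z\in S^{2n-1}\}$ being all Hermitian operators on $\symm^k\CC^n$). Hence $D_k\le\binom{\max(n,m)+k-1}{k}^4$, and choosing $k_0=9(\max(n,m)-1)$, so that $\max(n,m)+k_0-1=10(\max(n,m)-1)$, Lemma~\ref{binom-lem} with $p=10$, $q=9$ gives $D_{k_0}^{1/2k_0}\le(\tfrac{10}{9}\cdot10^{1/9})^2$, exactly as in Case~1 of Theorem~\ref{lower-bound}. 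It remains to bound $\int_{S_{\cM_{\cC_\FF}}}\|f\|_{2k_0}\,\dd\widetilde\mu$ by $\sqrt{2k_0}=\sqrt{18(\max(n,m)-1)}\le 3\sqrt2\,\sqrt{\max(n,m)}$; for $\FF=\CC$ this is \cite[Lemma~3.5]{Barv-Blek} applied to the $G$-module $\cM_{\cC_\CC}$ together with the normalization of \cite[Remark p.~62]{Barv-Blek}, and for $\FF=\RR$ the same bound follows from a bound on the diagonal reproducing kernel of $\cM_{\cC_\RR}$ (equivalently, a version of \cite[Lemma~3.5]{Barv-Blek} where $T$ is a union of $\SO(n)\times\SO(m)$-orbits). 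Multiplying the two estimates and taking reciprocals gives the claimed lower bound $3^3\cdot10^{-20/9}\cdot2^{-1/2}\cdot\max(n,m)^{-1/2}$; the factor $2^{-1/2}$ is exactly the price of passing to $\max(2n,2m)=2\max(n,m)$ variables.

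The main obstacle is the exact determination of $D_k$, i.e.\ the dimension of the span of the $k$-th tensor powers of the $G$-orbit of the rank-one Hermitian projection $w$, and the verification that the Barvinok--Blekherman $L^{2k}$-estimate retains the normalization constant $1$ of the biform case --- in the $\FF=\RR$ case with the added complication that $T$ is not a single $\SO(n)\times\SO(m)$-orbit, so that step must be recast in terms of the reproducing kernel of $\cM_{\cC_\RR}$. Everything else is a mechanical transcription of the arguments of Blekherman and Barvinok with $(n,m)$ replaced by $(2n,2m)$.
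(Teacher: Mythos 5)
Your proposal follows the paper's proof almost exactly: the published argument for Theorem \ref{lower-bound-F} is precisely ``run the proof of Theorem \ref{lower-bound} with $k_1=k_2=1$ and $(n,m)$ replaced by $(2n,2m)$, restricted to the subspace $\cC_\FF$'', with the same three adjustments you list --- doubled variables (whence the extra $2^{-1/2}$ and the change of $\tfrac{2}{2+n}$ into $\tfrac{1}{1+n}$), the replacement of $V_2$ by the subspace $\widetilde V_2\cong\cM_{\cC_\FF}$ in the $L^{2k_0}$-average step, and the invariance/unique-fixed-point justification of Blaschke--Santal\'o in the upper bound. The one place you genuinely diverge is the Barvinok step of the lower bound: you switch to $G=\U(n)\times\U(m)$ acting on the Hermitian rank-one tensor $(e_1e_1^\ast)\otimes(f_1f_1^\ast)$, which forces a fresh computation of $D_k$ as the span of $k$-th tensor powers of a unitary orbit (your self-declared ``main obstacle''); the paper instead keeps $w=(e_1)^{\otimes 2}\otimes(f_1)^{\otimes 2}$ and $G=\SO(2n)\times\SO(2m)$, so that Barvinok's inequality with the already established bound \eqref{eq:D_k} (in $2n,2m$ variables) applies verbatim to every $f\in\cC_\FF\subseteq\RR[\x,\y]_{2,2}$ and no new representation theory is needed. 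Your value $D_k=\binom{n+k-1}{k}^2\binom{m+k-1}{k}^2$ is correct and leads to the same constant, but the paper's choice is the path of least resistance. Finally, the subtlety you flag for $\FF=\RR$ --- that $\cM_{\cC_\RR}$ is preserved only by $\SO(n)\times\SO(m)$, which is not transitive on $T$, so the constancy of the diagonal reproducing kernel underlying the $\sqrt{2k_0}$ estimate of \cite[Lemma 3.5]{Barv-Blek} is no longer automatic --- is a real issue, but the paper does not address it either (it simply asserts that \eqref{identifikacija} and the remainder of the argument survive the passage from $V_2$ to $\widetilde V_2$); so this is a loose end shared with the published proof rather than a defect of your proposal relative to it.
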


We regard the vector space $\symm\FF[\z,\overline{\z},\w,\overline{\w}]_{1,1,1,1}$ as a module over the group $G$ where $G=\SU(n)\times \SU(m)$ is the product of special unitary groups if $\FF=\CC$ and $G=\SO(n)\times \SO(m)$ is the product of special orhogonal groups if $\FF=\RR$, with the actions given by  
$$(A,B)\cdot f(\z,\overline{\z},\w,\overline{\w}):= f(A^{-1}\z, \overline{A^{-1}}\overline{\z} ,B^{-1}\w, \overline{B^{-1}} \overline{\w})$$
where $(A,B)\in G$, $f\in \symm\FF[\z,\overline{\z},\w,\overline{\w}]_{1,1,1,1}$ and conjugations over $A^{-1}$, $B^{-1}$ are trivial if $\FF=\RR$.

\begin{proof}[Proof of Theorem \ref{lower-bound-F}]
	The proofs of both bounds are analogous to the proofs of the corresponding bounds in Theorem \ref{lower-bound} with some minor changes:
	\begin{enumerate}
		\item Since $\cC_{\FF}$ is a subspace in $\RR[\x,\y]_{2,2}$ where $\x=(x_1,\ldots,x_{2n})$, $\y:=(y_1,\ldots,y_{2m})$, we work with twice as many variables as in Theorem \ref{lower-bound}.
		\item In the proof of the lower bound there is a slight change in the part where we estimate 
			$\int_{S_{\cM_{\cC_{\FF}}}} \left\| f\right\|_{2k_0}\; \dd\widetilde\mu$. Namely, we use the fact that the elements in
			$S_{\cM_{\cC_\FF}}$ correspond to restrictions of linear functionals in $S_{\widetilde{V}_2}$ where $\widetilde{V}_2$ is a vector subspace of $V_2$ 
			(that is identified with $\cM_{\cC_\FF}$). On replacing $V_2$ with $\widetilde V_2$, the equality (\ref{identifikacija}) remains true and 
			also the rest of the proof is the same.
		\item	 In the proof of the upper bound the validity of the inequality (\ref{Blaschke}) for $\widetilde{\Pos}_{\cC_\CC}$ (resp.\ $\widetilde{\Pos}_{\cC_\RR}$) follows since
			$\widetilde{\Pos}_{\cC_\CC}$ (resp.\ $\widetilde{\Pos}_{\cC_\RR}$) 
			is invariant under the action of $\SU(n)\times \SU(m)$ (resp.\ $\SO(n)\times \SO(m)$) and since the origin is the only fixed point under this action.\qedhere
	\end{enumerate}
\end{proof}

We next present the upper bound for the volume of the set $\widetilde{\Sq}_{\cC_\FF}$.

\begin{theorem} \label{squares-F}
	For integers $n,m\geq 2$ we have
		$$\left(\frac{\Vol \widetilde{\Sq}_{\cC_\FF}}{\Vol B_{\cM_{\cC_\FF}}}\right)^{\frac{1}{D_{\cM_{\cC_\FF}}}}
			\leq 2^{10+\frac{3-\dim_{\RR}\FF}{2}}\sqrt{3} \cdot \frac{1}{\sqrt{nm-1}}.$$
\end{theorem}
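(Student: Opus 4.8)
The plan is to mimic, essentially verbatim, the proof of the upper bound in Theorem \ref{squares} for $k_1=k_2=1$, but carried out inside the Hilbert subspace $\cM_{\cC_\FF}$ of $\RR[\x,\y]_{2,2}$, where now $\x=(x_1,\dots,x_{2n})$ and $\y=(y_1,\dots,y_{2m})$. By Urysohn's inequality (cf.\ the proof of Theorem \ref{squares}) applied to the convex body $\widetilde{\Sq}_{\cC_\FF}\subseteq\cM_{\cC_\FF}$,
\[
	\left(\frac{\Vol \widetilde{\Sq}_{\cC_\FF}}{\Vol B_{\cM_{\cC_\FF}}}\right)^{\frac{1}{D_{\cM_{\cC_\FF}}}}\le \int_{S_{\cM_{\cC_\FF}}} L_{\widetilde{\Sq}_{\cC_\FF}}(f)\,\dd\widetilde\mu,
\]
where $L_{\widetilde{\Sq}_{\cC_\FF}}(f)=\max_{g\in\widetilde{\Sq}_{\cC_\FF}}\langle f,g\rangle$ is the support function and $\widetilde\mu$ is the rotation invariant probability measure on $S_{\cM_{\cC_\FF}}$. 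By Proposition \ref{inclusions} we have $\Sq_{\cC_\FF}\subseteq\Sq^{(2n,2m)}_{(2,2)}\cap\cC_\FF$, so every element of $\widetilde{\Sq}_{\cC_\FF}$ also lies in $\widetilde{\Sq}^{(2n,2m)}_{(2,2)}$. Hence for $f\in\cM_{\cC_\FF}\subseteq\cM^{(2n,2m)}_{(2,2)}$ one gets $L_{\widetilde{\Sq}_{\cC_\FF}}(f)\le L_{\widetilde{\Sq}^{(2n,2m)}_{(2,2)}}(f)\le\|f\|_{\sq}$ by \eqref{width-2}, where $\|f\|_{\sq}=\max_{g\in S_{\cU}}|\langle f,g^2\rangle|$ and $\cU=\RR[\x,\y]_{1,1}$ is taken in the $2n$, $2m$ variables. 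Thus it suffices to bound $\int_{S_{\cM_{\cC_\FF}}}\|f\|_{\sq}\,\dd\widetilde\mu$.

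For this last integral I would repeat the argument that proves the Claim in the proof of the upper bound of Theorem \ref{squares}. Writing $H_f(g)=\langle f,g^2\rangle$, a quadratic form on the space $\cU$ of dimension $D_{\cU}=(2n)(2m)=4nm$, we have $\|f\|_{\sq}=\|H_f\|_{\infty}\le 2\sqrt3\,\|H_f\|_{2D_{\cU}}$ by the proof of \cite[Theorem 4.2]{Blek1}. Applying H\"older's inequality, interchanging the order of integration, using the projection identity $\langle f,g^2\rangle=\langle f,\pr_{\cM_{\cC_\FF}}(g^2)\rangle$ for $f\in\cM_{\cC_\FF}$, the estimate $\|\pr_{\cM_{\cC_\FF}}(g^2)\|_2\le\|g^2\|_2\le 4^{4}$ coming from Lemma \ref{bask-prod} applied to the biform $g^2$ of bidegree $(2,2)$ (recall $\|g\|_2=1$), and the moment estimate
\[
	\int_{S_{\cM_{\cC_\FF}}}\langle f,u\rangle^{2D_{\cU}}\,\dd\widetilde\mu \le \left(\sqrt{\frac{2D_{\cU}}{D_{\cM_{\cC_\FF}}}}\right)^{2D_{\cU}}\qquad\text{for every unit }u\in\cM_{\cC_\FF}
\]
(valid because $\cM_{\cC_\FF}$ is a genuine Hilbert subspace, isometric to $\RR^{D_{\cM_{\cC_\FF}}}$, so the computation on p.\ 376 of \cite{Blek1} applies unchanged), one obtains exactly as in Theorem \ref{squares}
\[
	\int_{S_{\cM_{\cC_\FF}}}\|f\|_{\sq}\,\dd\widetilde\mu \le 2\sqrt3\cdot 4^{4}\cdot\sqrt{\frac{2D_{\cU}}{D_{\cM_{\cC_\FF}}}}.
\]

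It then remains to plug in the dimension count of Remark \ref{dim-remark}: $D_{\cM_{\cC_\FF}}=n^2m^2-1$ if $\FF=\CC$ and $D_{\cM_{\cC_\FF}}=\frac12 nm(nm+1)-1$ if $\FF=\RR$. A short computation shows that $\frac{2D_{\cU}}{D_{\cM_{\cC_\FF}}}=\frac{8nm}{D_{\cM_{\cC_\FF}}}<\frac{2^{\,5-\dim_{\RR}\FF}}{nm-1}$ for all integers $n,m\ge 2$; indeed for $\FF=\CC$ this reduces to $\frac{8}{nm-\frac{1}{nm}}<\frac{8}{nm-1}$, i.e.\ $nm>1$, and for $\FF=\RR$ to $\frac{16}{(nm+1)-\frac{2}{nm}}<\frac{16}{nm-1}$, i.e.\ $2>\frac{2}{nm}$. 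Substituting and simplifying the powers of $2$ and $3$ (using $4^4=2^8$ and the identity $9+\frac{5-\dim_{\RR}\FF}{2}=10+\frac{3-\dim_{\RR}\FF}{2}$) yields
\[
	\int_{S_{\cM_{\cC_\FF}}}\|f\|_{\sq}\,\dd\widetilde\mu \le 2^{10+\frac{3-\dim_{\RR}\FF}{2}}\sqrt3\cdot\frac{1}{\sqrt{nm-1}},
\]
which is the asserted bound.

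I expect the only genuinely non-mechanical points to be: (i) verifying that $\widetilde{\Sq}_{\cC_\FF}$ is a convex body with the origin in its interior in $\cM_{\cC_\FF}$, which follows, as in the real case, from the observation that $(\sum_{i=1}^{2n}x_i^2)(\sum_{j=1}^{2m}y_j^2)$ corresponds under the identification of Subsection \ref{ext-to-sym-multiforms} to $(\sum_i|z_i|^2)(\sum_j|w_j|^2)=\sum_{i,j}(z_iw_j)^\ast(z_iw_j)$, an interior point of $\Sq_{\cC_\FF}$; and (ii) keeping track of which ambient space each inner product, orthogonal projection and unit sphere lives in, as the argument moves between $\cC_\FF\subset\RR[\x,\y]_{2,2}$, its hyperplane $\cM_{\cC_\FF}$, and the degree-$(1,1)$ space $\cU$. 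All the analytic inequalities used (Urysohn, H\"older, Jensen, Lemma \ref{bask-prod}, and the moment estimate) are dimension-agnostic and transfer to the subspace $\cM_{\cC_\FF}$ without change, so no new ideas beyond the proof of Theorem \ref{squares} are required.
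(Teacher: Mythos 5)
Your proposal is correct and follows essentially the same route as the paper: the authors likewise reduce to the upper-bound argument of Theorem \ref{squares} in the $2n$, $2m$ variables, use the inclusion $\widetilde{\Sq}_{\cC_\FF}\subset\widetilde{\Sq}^{(2n,2m)}_{(2,2)}$ to dominate the support function by $\|\cdot\|_{\sq}$, and then replace $\sqrt{2D_\cU/D_{\cM}}$ by $\sqrt{2D_\cU/D_{\cM_{\cC_\FF}}}$, estimated via Remark \ref{dim-remark} exactly as in your computation. Your constant bookkeeping ($2\sqrt3\cdot 4^4\cdot 2^{(5-\dim_\RR\FF)/2}=2^{10+\frac{3-\dim_\RR\FF}{2}}\sqrt3$) agrees with the paper's two-case estimate.
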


\begin{proof}
	The proof is analogous to the proof of the upper bound in Theorem \ref{squares} with some minor changes:
	\begin{enumerate}
		\item Since $\cC_{\FF}$ is a subspace in $\RR[\x,\y]_{2,2}$ where $\x=(x_1,\ldots,x_{2n})$, $\y:=(y_1,\ldots,y_{2m})$, we work with twice as many variables as in Theorem \ref{lower-bound}.
		\item Since  $L_{\widetilde{\Sq}_{\cC_\FF}}(f)=\max_{g\in \widetilde{\Sq}_{\cC_\FF}}\langle f,g\rangle$ and 
			$\widetilde{\Sq}_{\cC_\FF}\subset \widetilde{\Sq}:=\widetilde{\Sq}^{(2n,2m)}_{(2,2)}$, it is true that
			$\displaystyle L_{\widetilde{\Sq}_{\cC_\FF}}(f)\leq \max_{g\in \widetilde{\Sq}}\langle f,g\rangle$.
			Now the inequality $\displaystyle L_{\widetilde{\Sq_{\cC_\FF}}}(f)\leq \left\|f\right\|_{\sq}$ 		
			is established in the same way as the inequality (\ref{width-2}) and everything up to the equality (\ref{ostanek}) remains the same.
			The estimate of $\sqrt{\frac{2D_{\cU}}{D_{\cM_{\cC_\FF}}}}$ becomes
				$$\sqrt{\frac{2D_{\cU}}{D_{\cM_{\cC_\FF}}}}\leq 
				\left\{\begin{array}{rl}
					\sqrt{\frac{8nm}{n^2m^2-1}}=\sqrt{\frac{8}{nm-\frac{1}{nm}}}< \frac{2\sqrt{2}}{\sqrt{nm-1}},& \text{if }\FF=\CC,\\
					\sqrt{\frac{16nm}{nm(nm+1)-2}}=\sqrt{\frac{16}{nm+1-\frac{2}{nm}}}< \frac{4}{\sqrt{nm-1}},& \text{if }\FF=\RR.
				\end{array}\right.$$
			For the first inequality we used Remark \ref{dim-remark}.\qedhere
	\end{enumerate}
\end{proof}

%
%

\section{Positive maps and biforms} \label{pos-maps-biforms}

In this section we connect linear maps on matrices with biforms, thus translating
the question of comparing the size of the cone of completely positive maps with the size of the cone of positive maps to the question of comparing the size of the cone of sums of squares biforms with the size of the cone of positive biforms.

	We denote by $\cL(\Sym_n,\Sym_m)$ the vector space of all linear maps
	from $\Sym_n$ to $\Sym_m$.
		There is a linear bijection $\Gamma$ between linear maps $\cL(\Sym_n,\Sym_m)$ and biforms $\RR[\x,\y]_{2, 2}$ of bidegree $(2,2)$
	given by
		\begin{equation} \label{identification}
			\Gamma:\cL(\Sym_n,\Sym_m)\to \RR[\x,\y]_{2, 2} ,\quad
			\Phi\mapsto p_\Phi(\x,\y):=\y^\ast \Phi(\x \x^\ast)\y.
		\end{equation}
	Thus $\Gamma$
translates
between
 properties of linear maps from $\cL(\Sym_n,\Sym_m)$ and the corresponding properties of biforms
from $\RR[\x,\y]_{2, 2}$. Positivity (resp.\ complete positivity) of a map $\Phi$ corresponds to nonnegativity (resp.\ being a sum of squares) of the polynomial $p_{\Phi}$:

\begin{proposition} \label{map-poly}
	Let $\Phi:\Sym_n\to \Sym_m$ be a linear map.
	Then
		\begin{enumerate}[label={\rm(\arabic*)}]
			\item $\Phi$ is positive iff $p_{\Phi}$ is nonnegative;
			\item \label{cp-sos} $\Phi$ is completely positive iff $p_{\Phi}$ is a sum of squares.
		\end{enumerate}
\end{proposition}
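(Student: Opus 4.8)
The plan is to verify both equivalences by unwinding the definition of $p_\Phi(\x,\y)=\y^\ast\Phi(\x\x^\ast)\y$ and exploiting the fact that rank-one psd matrices $\x\x^\ast$ (for $\x\in\RR^n$) span the cone $\Pos_n\subseteq\Sym_n$, while every psd matrix is a nonnegative combination of such. For (1), the forward direction is immediate: if $\Phi$ is positive then $\Phi(\x\x^\ast)\succeq 0$ for every $\x\in\RR^n$, hence $\y^\ast\Phi(\x\x^\ast)\y\geq 0$ for all $\y\in\RR^m$, i.e.\ $p_\Phi\geq 0$ on $\RR^{n+m}$. Conversely, if $p_\Phi\geq 0$ everywhere, then for each fixed $\x$ the form $\y\mapsto\y^\ast\Phi(\x\x^\ast)\y$ is a nonnegative quadratic form, so $\Phi(\x\x^\ast)\succeq 0$; since an arbitrary $A\succeq 0$ can be written $A=\sum_i \x_i\x_i^\ast$ and $\Phi$ is linear, $\Phi(A)=\sum_i\Phi(\x_i\x_i^\ast)\succeq 0$. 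This handles (1) with no real obstacle.

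For (2), I would first recall the standard characterization of complete positivity via the Choi matrix: $\Phi$ is completely positive if and only if the Choi matrix $C_\Phi:=\sum_{i,j}E_{ij}\otimes\Phi(E_{ij})\in\Sym_{nm}$ is psd, equivalently if and only if $\Phi$ admits a Kraus decomposition $\Phi(A)=\sum_r V_r A V_r^\ast$ with $V_r\in\RR^{m\times n}$ (in the real-symmetric setting; one must be slightly careful that over $\RR$ the relevant operators are real matrices, so the Kraus operators may be taken real, which is the content one gets from diagonalizing the real symmetric Choi matrix). The key computation is then that a Kraus term contributes
\[
\y^\ast\big(V_r(\x\x^\ast)V_r^\ast\big)\y=\big(\y^\ast V_r\x\big)\big(\x^\ast V_r^\ast\y\big)=(\y^\ast V_r\x)^2,
\]
and $\y^\ast V_r\x$ is a biform of bidegree $(1,1)$ in $(\x,\y)$. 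Summing over $r$ shows $p_\Phi=\sum_r(\y^\ast V_r\x)^2$ is a sum of squares of bidegree-$(1,1)$ biforms. Conversely, if $p_\Phi=\sum_r q_r(\x,\y)^2$ with $q_r\in\RR[\x,\y]_{1,1}$, then each $q_r$ can be written $q_r(\x,\y)=\y^\ast V_r\x$ for a unique $V_r\in\RR^{m\times n}$, and reversing the computation above shows that the linear map $\x\x^\ast\mapsto\sum_r V_r(\x\x^\ast)V_r^\ast$ agrees with $\Phi$ on all rank-one psd matrices; since these span $\Sym_n$ and both sides are linear, $\Phi(A)=\sum_r V_rAV_r^\ast$ for all $A$, which is manifestly completely positive (each ampliation $I_k\otimes\Phi$ inherits the same Kraus form).

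The main subtlety — and where I would be most careful — is the passage between ``sum of squares of biforms'' and ``Kraus/Choi form'' in the real symmetric category rather than the complex one: one needs that a psd real symmetric matrix has a real (not merely complex) decomposition $\sum_i \x_i\x_i^\ast$, which is just the spectral theorem over $\RR$, and correspondingly that the Choi matrix being psd yields real Kraus operators. A cleaner route that avoids invoking the Choi matrix explicitly is to observe that the biforms $q(\x,\y)$ of bidegree $(1,1)$ are exactly the expressions $\y^\ast V\x$ with $V\in\RR^{m\times n}$ arbitrary, so $p_\Phi\in\Sq^{(n,m)}_{(2,2)}$ is literally the statement $\Phi(\x\x^\ast)=\sum_r V_r(\x\x^\ast)V_r^\ast$ on rank-one matrices, and then linearity plus the spanning property of rank-one psd matrices upgrades this to all of $\Sym_n$; complete positivity of such a map is then checked directly by ampliation. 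This is essentially a dictionary between the identification $\Gamma$ of \eqref{identification} and the Kraus representation, so the proof is short once the bidegree-$(1,1)$ biform $\leftrightarrow$ matrix correspondence is made explicit.
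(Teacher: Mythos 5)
Your part (1) and the sum-of-squares $\Rightarrow$ completely positive direction of (2) are correct and coincide with the paper's argument: read off real Kraus operators $V_r$ from the bidegree-$(1,1)$ squares $q_r(\x,\y)=\y^\ast V_r\x$, note that $X\mapsto \sum_r V_r X V_r^\ast$ agrees with $\Phi$ on $\Sym_n$, and observe that such a map is manifestly completely positive. (Implicit in both your argument and the paper's is that a sum-of-squares representation of a bidegree-$(2,2)$ biform can be taken with all squares of bidegree $(1,1)$; this is a routine bihomogeneity argument, carried out in the paper as Lemma \ref{prehod-na-zij}(1).)

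There is, however, a genuine gap in your completely positive $\Rightarrow$ sum-of-squares direction. You invoke the Choi-matrix/Kraus characterization, but $\Phi$ is defined only on $\Sym_n$, so the Choi matrix $\sum_{i,j}E_{ij}\otimes\Phi(E_{ij})$ is literally undefined ($E_{ij}\notin\Sym_n$ for $i\neq j$), and the usual derivation of a Kraus decomposition — evaluating the ampliation $\Phi^{(n)}$ at the maximally entangled element $\sum_{i,j}E_{ij}\otimes E_{ij}$ — is unavailable because that element does not lie in the domain $M_n(\FF)\otimes\Sym_n$ of $\Phi^{(n)}$ (its component in $M_n(\FF)\otimes\mathbb K_n$ is nonzero). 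The subtlety is therefore not the realness of the Kraus operators, which you flag, but the fact that complete positivity here is a condition on proper subspaces $M_k(\FF)\otimes\Sym_n\subsetneq M_{kn}(\FF)$. The paper bridges exactly this gap by first invoking Arveson's extension theorem to extend $\Phi$ to a completely positive map $\widetilde\Phi:M_n\to M_m$ and only then applying Stinespring/Choi to $\widetilde\Phi$ to obtain $\widetilde\Phi(X)=\sum_i V_i^\ast X V_i$, from which $p_\Phi=p_{\widetilde\Phi}$ is a sum of squares. To complete your proof you must either insert this extension step or give an independent argument that a map that is completely positive on $\Sym_n$ in the ampliation sense admits a Kraus form.
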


\begin{proof}
		The implication ($\Rightarrow$) of (1) is trivial. For the implication ($\Leftarrow$) observe that any positive semidefinite matrix $X\in \Sym_n$ can be written as the sum
	$X=\sum_{i=1}^k v_i v_i^\ast$ where $v_i\in \RR^n$ for each $i$. Hence $y^\ast\Phi(X)y=\sum_{i=1}^k y^\ast p_{\Phi}(v_iv_i^\ast)y$ is positive for every $y\in \RR^m$.

		To prove the implication ($\Rightarrow$) of (2) first invoke the Arveson's extension theorem
\cite[Theorem 7.5]{PAU}  to extend $\Phi$ to a completely positive map
$\widetilde \Phi: M_n\to M_m$ and then the Stinespring's representation theorem (see \cite[Theorem 4.1]{PAU} or \cite[Theorem 1]{Cho75}) to represent $\widetilde \Phi$ in the form $X\mapsto \sum_{i=1}^\ell V_i^\ast X V_i$ for some $\ell\in \NN$ where
	$\sum_{i=1}^{\ell}V_i^\ast V_i$ is a bounded operator of norm
	$\left\|\Phi\right\|$. (The proofs of the real finite-dimensional versions of \cite[Theorem 7.5]{PAU} and \cite[Theorem 4.1]{PAU} can be found, for example, in \cite[\S3.1]{HKM1}.) Hence
		$$p_\Phi(\x,\y)= \sum_{i=1}^\ell \y^\ast V_i^\ast \x  \x^\ast V_i  \y=\sum_{i=1}^\ell q_i^2(\x,\y),$$
	where $q_i(\x,\y)=\x^\ast V_i \y$ for each $i$.

		It remains to prove the implication ($\Leftarrow$) of (2). It suffices to prove that there is an extension of $\Phi$ to a completely positive map
	$\widetilde \Phi:M_n\to M_m$.
	Since $p_\Phi(\x,\y)$ is a sum of squares, it is of the form
		\begin{eqnarray*}
			p_\Phi(\x,\y)
			&=& \sum_{i=1}^\ell q_{i}(\x,\y)^2
			= \sum_{i=1}^{\ell} \left(\sum_{j=1}^{m}\sum_{k=1}^{n} q_{ijk} x_{k}y_{j} \right)^2=
				\sum_{i=1}^{\ell} (\y^\ast (q_{ijk})_{jk}\x)^2\\
			&=&	\sum_{i=1}^{\ell} \y^\ast (q_{ijk})_{jk}\x \x^\ast(q_{ijk})_{jk}^\ast \y,
		\end{eqnarray*}
	where $q_{i}(\x,\y)=\sum_{j=1}^{m}\sum_{k=1}^{n} q_{ijk} x_{k}y_{j}\in \RR[\x,\y]$.
	From $p_\Phi(\x,\y):=\y^\ast \Phi(\x \x^\ast)\y$ it follows that
		$$\Phi(\x \x^\ast)=\sum_{i=1}^\ell (q_{ijk})_{jk}\x \x^\ast(q_{ijk})_{jk}^\ast.$$
	Hence the map $\widetilde \Phi:M_n\to M_m$ defined by
		$$\widetilde \Phi(X)=\sum_{i=1}^\ell (q_{ijk})_{jk} X (q_{ijk})_{jk}^\ast \quad \text{for all }
			X\in M_n(\RR)$$
	is a completely positive extension of $\Phi$.
\end{proof}

%
Let $\Pos(\Sym_n,\Sym_m)$ and $\CP(\Sym_n, \Sym_m)$ denote the cone of positive maps
	and the cone of completely positive maps from $\Sym_n$ to $\Sym_m$, respectively.
By Proposition \ref{map-poly}, comparing the cones $\Pos(\Sym_n,\Sym_m)$ and $\CP(\Sym_n, \Sym_m)$
is equivalent to comparing the cones $\Pos^{(n,m)}_{(2,2)}$ and $\Sq^{(n,m)}_{(2,2)}$.

\subsection{Comparing the volumes of  $\widetilde{\Pos}^{(n,m)}_{(2,2)}$ and 
$\widetilde{\Sq}_{(2,2)}^{(n,m)}$}

In this subsection we obtain bounds on the ratio between the volumes of the sets
$\widetilde{\Sq}_{(2,2)}^{(n,m)}$ and $\widetilde{\Pos}^{(n,m)}_{(2,2)}$.
By Theorem \ref{n,m>3-intro} the sets are the same if and only if $n\leq 2$ or $m\leq 2$.
Here is the main result of this subsection.

\begin{theorem} \label{cor-1}
	For integers $n,m\geq 3$ we have
		$$  \frac{3\cdot \sqrt{3}}{2^{10} \cdot 7^2 \cdot \sqrt{\min(n,m)}}<
		\left(\frac{\Vol \widetilde{\Sq}_{(2,2)}^{(n,m)} }{\Vol \widetilde{\Pos}^{(n,m)}_{(2,2)}}\right)^{\frac{1}{D_{\cM}}} < 
		\frac{2^{12}\cdot 5^2 \cdot 6^{\frac{1}{2}} \cdot 10^{\frac{2}{9}}}{3^3\cdot \sqrt{\min(n,m)+1}},$$
	where $D_\cM=\binom{n+1}{2}\binom{m+1}{2}-1=
		\frac{(n+1)n(m+1)m-4}{4}.$ In particular, 
		$$\left(\frac{\Vol \widetilde{\Sq}_{(2,2)}^{(n,m)} }{\Vol \widetilde{\Pos}^{(n,m)}_{(2,2)}}\right)^{\frac{1}{D_{\cM}}} =\Theta\left(\min(n,m)^{-\frac{1}{2}}\right).$$
\end{theorem}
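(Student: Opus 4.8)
The plan is to deduce both inequalities formally by dividing the two-sided volume estimates of Theorems~\ref{lower-bound} and~\ref{squares} in the case $k_1=k_2=1$, and then to simplify the resulting elementary expressions. Since
\[
\left(\frac{\Vol \widetilde{\Sq}_{(2,2)}^{(n,m)}}{\Vol \widetilde{\Pos}^{(n,m)}_{(2,2)}}\right)^{\frac{1}{D_{\cM}}}
=\left(\frac{\Vol \widetilde{\Sq}_{(2,2)}^{(n,m)}}{\Vol B_{\cM}}\right)^{\frac{1}{D_{\cM}}}
\Big/\left(\frac{\Vol \widetilde{\Pos}^{(n,m)}_{(2,2)}}{\Vol B_{\cM}}\right)^{\frac{1}{D_{\cM}}},
\]
the quotient lies between $d_{2,2}$ divided by the upper bound for $\widetilde{\Pos}^{(n,m)}_{(2,2)}$ and $e_{2,2}$ divided by the lower bound $c_{2,2}$ for $\widetilde{\Pos}^{(n,m)}_{(2,2)}$, all taken from those two theorems with $k_1=k_2=1$.

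For the upper bound I would substitute $e_{2,2}=2^{10}\sqrt{6}\,(nm+n+m)^{-1/2}$ and $c_{2,2}=3^{3}\cdot 10^{-20/9}\max(n,m)^{-1/2}$, obtaining
\[
\left(\frac{\Vol \widetilde{\Sq}_{(2,2)}^{(n,m)}}{\Vol \widetilde{\Pos}^{(n,m)}_{(2,2)}}\right)^{\frac{1}{D_{\cM}}}
\leq \frac{2^{10}\sqrt{6}\cdot 10^{20/9}}{3^{3}}\,\sqrt{\frac{\max(n,m)}{nm+n+m}} .
\]
Using the identity $nm+n+m=\max(n,m)\big(\min(n,m)+1\big)+\min(n,m)\geq \max(n,m)\big(\min(n,m)+1\big)$, the square root is $\leq\big(\min(n,m)+1\big)^{-1/2}$, and the bookkeeping identities $10^{20/9}=10^{2}\cdot 10^{2/9}$ and $2^{10}\cdot 10^{2}=2^{12}\cdot 5^{2}$ turn the constant into exactly $\dfrac{2^{12}\cdot 5^{2}\cdot 6^{1/2}\cdot 10^{2/9}}{3^{3}}$, which is the asserted upper bound.

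For the lower bound I would insert $d_{2,2}=2^{-8}6^{-1/2}\sqrt{nm+n+m}\,\big((n+4)(m+4)\big)^{-1}$ together with the $\widetilde{\Pos}$-upper bound $2\big(\min(\tfrac{2}{2+n},\tfrac{2}{2+m})\big)^{1/2}=2\sqrt{2}\,\big(\max(n,m)+2\big)^{-1/2}$, which after simplification gives
\[
\left(\frac{\Vol \widetilde{\Sq}_{(2,2)}^{(n,m)}}{\Vol \widetilde{\Pos}^{(n,m)}_{(2,2)}}\right)^{\frac{1}{D_{\cM}}}
\geq \frac{\sqrt{(nm+n+m)\big(\max(n,m)+2\big)}}{2^{10}\sqrt{3}\,(n+4)(m+4)} .
\]
It then remains to verify the purely elementary inequality
\[
\frac{\sqrt{(nm+n+m)\big(\max(n,m)+2\big)\min(n,m)}}{(n+4)(m+4)}\;\geq\;\frac{9}{49}
\qquad\text{for all integers }n,m\geq 3 .
\]
Writing $M=\min(n,m)$ and $N=\max(n,m)$, I expect the left-hand side to be increasing in $N$ for fixed $M\geq 3$ (a short sign check on the derivative of the rational function under the square root), so its minimum over $N\geq M$ is attained at $N=M$, where the expression collapses to $\dfrac{M(M+2)}{(M+4)^{2}}$; this is increasing in $M$ and equals $\tfrac{15}{49}>\tfrac{9}{49}$ at $M=3$, which closes the lower bound.

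The $\Theta$-statement is then immediate, since both bounds are an absolute constant times $\min(n,m)^{-1/2}$ up to the harmless replacement of $\min(n,m)$ by $\min(n,m)+1$, and the stated value of $D_{\cM}$ is read off from $D_{\cM}=\binom{n+1}{2}\binom{m+1}{2}-1$. The only genuinely nonautomatic step is the last displayed two-variable estimate, where one must confirm that no value below $\tfrac{9}{49}$ occurs for small $n,m$; everything else is a direct substitution into Theorems~\ref{lower-bound} and~\ref{squares} followed by algebraic cleanup.
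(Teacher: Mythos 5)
Your proposal is correct and follows essentially the same route as the paper: both bounds are obtained by dividing the $k_1=k_2=1$ cases of Theorems \ref{psd-intro} and \ref{squares-intro}, and the upper-bound cleanup via $nm+n+m\geq \max(n,m)(\min(n,m)+1)$ is exactly the paper's. The only (harmless) deviation is in the final elementary inequality for the lower bound, where the paper bounds $(1+\tfrac4n)(1+\tfrac4m)\leq \tfrac{49}{9}$ and drops positive terms, while you reduce to the diagonal $n=m$ by a monotonicity argument; your claimed derivative sign check does go through, yielding the minimum $\tfrac{15}{49}>\tfrac{9}{49}$ at $n=m=3$.
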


\begin{proof}
	 We first prove the upper bound.
Combining the lower bound in Theorem \ref{lower-bound} with the upper bound in Theorem \ref{squares} we have
		$$\left(\frac{\Vol \widetilde{\Sq}_{(2,2)}^{(n,m)}   }{\Vol
			\widetilde{\Pos}^{(n,m)}_{(2,2)} }\right)^{\frac{1}{D_{\cM}}} \leq
			 		\frac{2^{12}\cdot 5^2 \cdot 6^{\frac{1}{2}} \cdot 10^{\frac{2}{9}}\cdot \sqrt{\max(n,m)}}{3^3 \cdot \sqrt{nm+n+m}}.$$
	Observe that
		$$\frac{\sqrt{\max(n,m)}}{\sqrt{nm+n+m}} =\frac{1 }{\sqrt{\min(m,n)+1+\frac{\min(m,n)}{\max(m,n)}}}<\frac{1}{\sqrt{\min(n,m)+1}}.$$

	It remains to prove the lower bound.
	Use the lower bound from Theorem \ref{squares} and the upper bound in Theorem \ref{lower-bound} to obtain
		$$\frac{\sqrt{nm+n+m}}{2^8\sqrt{6}(n+4)(m+4)}\frac{\sqrt{2+\max(n,m)}}{2\sqrt{2}} \leq
			\left(\frac{\Vol
				\widetilde{\Sq}_{(2,2)}^{(n,m)}}{\Vol
				\widetilde{\Pos}_{ (2,2) }^{(n,m)}}\right)^{\frac{1}{D_{\cM}}}.$$
	Note that
		$$\frac{\sqrt{(nm+n+m)(2+\max(n,m))}}{2^{10}\sqrt{3}(n+4)(m+4)}=
			\frac{\sqrt{(1+\frac{1}{m}+\frac{1}{n})(\frac{2}{nm}+\frac{1}{\min(n,m)})}}{2^{10}\sqrt{3}
			(1+\frac{4}{n})(1+\frac{4}{m})}
			> \frac{\sqrt{\frac{1}{\min(n,m)}}}{\frac{2^{10} 7^2}{3\sqrt{3}}},$$
	where the estimate in the denominator follows by
		\begin{equation*}
				(1+\frac{4}{n})(1+\frac{4}{m})
					\underbrace{\leq}_{n,m\geq 3}\frac{7^2}{3^2}.
		\end{equation*}
	This concludes the proof of Theorem \ref{cor-1}.
\end{proof}

\subsection{Comparing the volumes of cones of positive and completely positive maps}\label{ssec:thm12}

We define the \textbf{probability} $p_{n,m}$ that a randomly chosen positive map
$\Phi:\mathbb{S}_n\to \mathbb{S}_m$ is completely positive to be the ratio between the volumes of the sections $\widetilde{\Sq}^{(n,m)}_{(2,2)}$ and $\widetilde{\Pos}^{(n,m)}_{(2,2)}$ in $\cM$, i.e.,
	$$p_{n,m}=
		\frac{\Vol \widetilde{\Sq}^{(n,m)}_{(2,2)} }{\Vol \widetilde{\Pos}^{(n,m)}_{(2,2)}}.$$

\begin{corollary}\label{asimptotika}
	For $n,m \in \NN$, $n\geq 3, m\geq 3$, the probability $p_{n,m}$ that a random positive map
	$\Phi:\mathbb{S}_n\to \mathbb{S}_m$ is completely
	positive, is bounded by
		$$\left(\frac{3\sqrt{3}}{2^{10} 7^2 \sqrt{\min(n,m)}}\right)^{D_\cM}<
			p_{n,m}< \left(  \frac{2^{12}\cdot 5^2 \cdot 6^{\frac{1}{2}} \cdot 10^{\frac{2}{9}}}{3^3\cdot \sqrt{\min(n,m)+1}}  \right)^{D_\cM},$$
	where $D_\cM=\binom{n+1}{2}\binom{m+1}{2}-1=
		\frac{(n+1)n(m+1)m-4}{4}.$
	In particular, if $\min(n,m)\geq \frac{2^{25}\cdot 5^4\cdot 10^{\frac{4}{9}}}{3^5}$, then
		$$\lim_{\max(n,m)\to\infty}
			p_{n,m}=0.$$
\end{corollary}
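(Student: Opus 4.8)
The plan is to read this corollary off almost directly from Theorem~\ref{cor-1}. By definition $p_{n,m}=\Vol\widetilde{\Sq}^{(n,m)}_{(2,2)}/\Vol\widetilde{\Pos}^{(n,m)}_{(2,2)}$, so
$$p_{n,m}=\left(\left(\frac{\Vol\widetilde{\Sq}^{(n,m)}_{(2,2)}}{\Vol\widetilde{\Pos}^{(n,m)}_{(2,2)}}\right)^{\frac{1}{D_\cM}}\right)^{D_\cM}.$$
Since $D_\cM=\binom{n+1}{2}\binom{m+1}{2}-1\ge\binom{4}{2}^2-1=35>0$ for $n,m\ge3$, the map $t\mapsto t^{D_\cM}$ is strictly increasing on $(0,\infty)$; applying it to the two-sided estimate of Theorem~\ref{cor-1} yields the claimed bounds on $p_{n,m}$ verbatim.

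For the asymptotic assertion I would use only the upper bound, together with the observation that the threshold $C_0:=\frac{2^{25}\cdot5^4\cdot10^{4/9}}{3^5}$ is precisely calibrated so that $2^{12}\cdot5^2\cdot6^{1/2}\cdot10^{2/9}=3^3\sqrt{C_0}$; this is a one-line check after squaring and writing $6=2\cdot3$. Consequently, whenever $\min(n,m)\ge C_0$ we have $\min(n,m)+1>C_0$, so the base in the upper bound satisfies
$$\frac{2^{12}\cdot5^2\cdot6^{1/2}\cdot10^{2/9}}{3^3\sqrt{\min(n,m)+1}}\le\frac{2^{12}\cdot5^2\cdot6^{1/2}\cdot10^{2/9}}{3^3\sqrt{C_0+1}}=\sqrt{\tfrac{C_0}{C_0+1}}=:q_0<1,$$
a constant not depending on $n,m$ (here I use that the base is monotone decreasing in $\min(n,m)$).

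It then remains to note that $D_\cM\to\infty$ as $\max(n,m)\to\infty$: taking $\binom{\min(n,m)+1}{2}\ge\binom{4}{2}=6$ gives $D_\cM\ge6\binom{\max(n,m)+1}{2}-1$, which grows without bound. Hence $p_{n,m}<q_0^{D_\cM}\to0$ as $\max(n,m)\to\infty$ under the hypothesis $\min(n,m)\ge C_0$, which is the last claim.

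I do not expect any real obstacle here: the substantive content is entirely in Theorem~\ref{cor-1} (which in turn rests on Theorems~\ref{lower-bound} and~\ref{squares}), and this corollary is a formal consequence. The only points requiring a little care are the bookkeeping of the explicit numerical constants—in particular verifying that $C_0$ is exactly the value at which the upper-bound base equals $1$—and making explicit that the base stays uniformly below $1$ while the exponent $D_\cM$ diverges.
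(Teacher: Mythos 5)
Your proposal is correct and matches the paper's approach: the paper proves this corollary in one line by raising the two-sided bound of Theorem~\ref{cor-1} to the power $D_\cM$, exactly as you do. Your additional verification that the threshold $\frac{2^{25}\cdot 5^4\cdot 10^{4/9}}{3^5}$ is calibrated so the upper-bound base drops below $1$, together with $D_\cM\to\infty$, correctly fills in the limit claim that the paper leaves implicit.
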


\begin{proof}
	Corollary \ref{asimptotika} follows by the definition of $p_{n,m}$
	and Theorem \ref{cor-1}.
\end{proof}

The hyperplane $\cH^{(n,m)}_{(2,2)}$ corresponds, under our identification, to linear maps 
$\Phi:\mathbb{S}_n\to \mathbb{S}_m$ satisfying $\Tr(\Phi(I_n))=nm$ by the following proposition.

\begin{proposition} \label{hyperplane-equiv}
	Let $\Phi:\mathbb{S}_n\to \mathbb{S}_m$ be a linear map and $p_\Phi$ the corresponding
	biform. Then $p_\Phi\in \cH^{(n,m)}_{(2,2)}$ if and only if $\Tr(\Phi(I_n))=nm$.
\end{proposition}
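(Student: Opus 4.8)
The plan is to compute the average $\int_T p_\Phi\,\dd\sigma$ explicitly as a scalar multiple of $\Tr(\Phi(I_n))$ and then simply read off the claimed equivalence. Writing out the product measure $\sigma=\sigma_1\times\sigma_2$ and recalling $p_\Phi(\x,\y)=\y^\ast\Phi(\x\x^\ast)\y$, we have
\[
\int_T p_\Phi\,\dd\sigma = \int_{x\in S^{n-1}}\left(\int_{y\in S^{m-1}} y^\ast\Phi(xx^\ast)y\,\dd\sigma_2(y)\right)\dd\sigma_1(x).
\]

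First I would record the two elementary spherical moment identities. Since $\sigma_2$ is the normalized Lebesgue measure on $S^{m-1}$, it is invariant under permutations and sign changes of the coordinates, so $\int_{S^{m-1}} y_i y_j\,\dd\sigma_2(y)=0$ for $i\ne j$ and all the diagonal integrals $\int_{S^{m-1}} y_i^2\,\dd\sigma_2(y)$ are equal; as their sum is $\int_{S^{m-1}}\|y\|^2\,\dd\sigma_2(y)=1$, each equals $\frac1m$. Hence $\int_{S^{m-1}} y^\ast A y\,\dd\sigma_2(y)=\frac1m\Tr(A)$ for every $A\in\Sym_m$, and likewise $\int_{S^{n-1}} xx^\ast\,\dd\sigma_1(x)=\frac1n I_n$. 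Applying the first identity with $A=\Phi(xx^\ast)$ and then using linearity of $\Phi$ and of the trace to interchange them with the remaining integration over $S^{n-1}$ (a routine interchange for a continuous vector-valued integrand on a compact set), we obtain
\[
\int_T p_\Phi\,\dd\sigma = \frac1m\int_{S^{n-1}}\Tr\bigl(\Phi(xx^\ast)\bigr)\,\dd\sigma_1(x) = \frac1m\Tr\!\left(\Phi\!\left(\int_{S^{n-1}}xx^\ast\,\dd\sigma_1(x)\right)\right) = \frac1{nm}\Tr\bigl(\Phi(I_n)\bigr).
\]

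Finally, by the definition of $\cH^{(n,m)}_{(2,2)}$, the condition $p_\Phi\in\cH^{(n,m)}_{(2,2)}$ is exactly $\int_T p_\Phi\,\dd\sigma=1$, which by the displayed identity holds if and only if $\Tr(\Phi(I_n))=nm$. There is no genuine obstacle in this argument; the only points deserving a line of justification are the moment identity $\int_{S^{m-1}} y_iy_j\,\dd\sigma_2(y)=\frac1m\delta_{ij}$ and the interchange of $\Phi$ and $\Tr$ with the integral over $S^{n-1}$, both of which are standard.
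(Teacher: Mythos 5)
Your proposal is correct and follows essentially the same route as the paper: both reduce the condition $\int_T p_\Phi\,\dd\sigma=1$ to $\frac{1}{nm}\Tr(\Phi(I_n))=1$ by first integrating out $\y$ via the second-moment identity on $S^{m-1}$ and then pulling $\Tr$ and $\Phi$ through the integral over $S^{n-1}$ using $\int_{S^{n-1}}xx^\ast\,\dd\sigma_1=\frac1n I_n$ (the paper's Lemma \ref{pom-lem}). The only cosmetic difference is that you justify the moment identity by permutation and sign-change symmetry, whereas the paper derives it from rotation invariance and the fact that a matrix commuting with all orthogonal matrices is scalar; both are standard and equally valid.
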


To prove Proposition \ref{hyperplane-equiv} we need the following lemma.

\begin{lemma} \label{pom-lem}
	Let $\widetilde{\sigma}$ be a normalized Lebesgue measure on $\Sym^{n-1}$.
	Then \[\displaystyle\int_{\Sym^{n-1}} xx^t \dd \widetilde{\sigma}(x)=\frac{1}{n}I_n.\]
\end{lemma}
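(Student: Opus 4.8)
The plan is to use the symmetry of the normalized surface measure $\widetilde\sigma$ together with a short trace computation. Write $M:=\int_{\Sym^{n-1}}xx^t\,\dd\widetilde\sigma(x)\in\Sym_n$; this is well defined because $\widetilde\sigma$ is a finite measure and each entry $M_{ij}=\int_{\Sym^{n-1}}x_ix_j\,\dd\widetilde\sigma(x)$ has a bounded integrand on the sphere.

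First I would show that $M$ is a scalar matrix, directly from invariance of $\widetilde\sigma$ under coordinate reflections and transpositions. Fix $i\neq j$ and let $R$ be the reflection negating the $j$-th coordinate and fixing the others; it is an isometry of $\Sym^{n-1}$, so $R_*\widetilde\sigma=\widetilde\sigma$, while $(x_ix_j)\circ R=-x_ix_j$. Hence $M_{ij}=-M_{ij}$, so $M_{ij}=0$ whenever $i\neq j$. Similarly, the transposition of the $i$-th and $j$-th coordinates is an isometry preserving $\widetilde\sigma$ that carries $x_i^2$ to $x_j^2$, so $M_{ii}=M_{jj}$ for all $i,j$. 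Denoting this common diagonal value by $c$, we obtain $M=cI_n$. (Equivalently, one may invoke that $AMA^t=M$ for every orthogonal $A$ since $\widetilde\sigma$ is $\mathrm{O}(n)$-invariant, and that matrices commuting with all of $\mathrm{O}(n)$ are scalar.)

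It remains to determine $c$, which follows by taking traces: by linearity of $\Tr$ and of the integral, $\Tr M=\int_{\Sym^{n-1}}\Tr(xx^t)\,\dd\widetilde\sigma(x)=\int_{\Sym^{n-1}}\|x\|_2^2\,\dd\widetilde\sigma(x)=\int_{\Sym^{n-1}}1\,\dd\widetilde\sigma(x)=1$, using $\|x\|_2=1$ on $\Sym^{n-1}$ and that $\widetilde\sigma$ is a probability measure, while $\Tr(cI_n)=cn$. Thus $c=\tfrac1n$ and $M=\tfrac1nI_n$. There is no real obstacle in this argument; the only points to be careful about are recording why the reflections and transpositions preserve $\widetilde\sigma$ (so that the substitutions are legitimate) and that finiteness of $\widetilde\sigma$ permits exchanging the integral with $\Tr$ entrywise.
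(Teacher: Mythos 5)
Your proof is correct and follows essentially the same route as the paper: exploit invariance of $\widetilde{\sigma}$ under orthogonal transformations to show that $M=\int_{\Sym^{n-1}}xx^t\,\dd\widetilde{\sigma}(x)$ is a scalar matrix, then take traces and use $\|x\|_2=1$ to pin down the constant $\tfrac1n$. The only difference is cosmetic: the paper deduces scalarity by showing $M$ commutes with every orthogonal matrix and invoking that orthogonal matrices span $M_n(\RR)$, whereas you read off the entries directly from coordinate reflections and transpositions --- an equally valid, slightly more hands-on variant that you yourself note is equivalent.
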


\begin{proof}
	Since the measure $\widetilde{\sigma}$ is rotation invariant, it follows that
	for every orthogonal matrix $Q\in M_n(\RR)$ we have 
	$$
		\int_{\Sym^{n-1}} xx^t \dd  \widetilde{\sigma}(x) 
			= \int_{\Sym^{n-1}} (Qx)(Qx)^t \dd  \widetilde{\sigma}(x)
			= \int_{\Sym^{n-1}} (Qxx^t Q^t) \dd  \widetilde{\sigma}(x)
			= Q\left(\int_{\Sym^{n-1}} xx^t \dd  \widetilde{\sigma}(x)\right) Q^t,
	$$
	where the last equality follows by linearity of $Q$.
	Thus
		$$Q\left(\int_{\Sym^{n-1}} xx^t \dd  \widetilde{\sigma}(x)\right)=
			\left(\int_{\Sym^{n-1}} xx^t \dd  \widetilde{\sigma}(x)\right)Q.$$ 
	Since orthogonal matrices span the vector space $M_n(\RR)$, 
	$\displaystyle\int_{\Sym^{n-1}} xx^t \dd \widetilde{\sigma}(x)$  commutes with every matrix
	from $M_n(\RR)$. 
	Therefore 
		$$\int_{\Sym^{n-1}} xx^t \dd  \widetilde{\sigma}(x)=\alpha I_n$$
	for some $\alpha\in \RR$.
	Now 
		$$n\alpha=\Tr\left(\int_{\Sym^{n-1}} xx^t \dd  \widetilde{\sigma}(x)\right)=
			\int_{\Sym^{n-1}} \Tr(xx^t) \dd  \widetilde{\sigma}(x)=
			\int_{\Sym^{n-1}} \Tr(x^tx) \dd  \widetilde{\sigma}(x)=
			1,$$
	where the the second equality follows by $\Tr$ being linear and the
	last equality follows by $\sigma$ being normalized.
	This proves Lemma \ref{pom-lem}.
\end{proof}

\begin{proof}[Proof of Proposition \ref{hyperplane-equiv}]
	By definition, 
		$$p_\Phi\in \cH^{(n,m)}_{(2,2)}\quad \text{if and only if}\quad 
			\int_T p_\Phi(x,y) \dd\sigma=1.$$
	We have
		\begin{eqnarray*} 
			\int_T p_\Phi(x,y) \dd\sigma
			&=&	\int_T y^t\Phi(xx^t)y \dd\sigma=
				\int_{S^{n-1}} \left(\int_{S^{m-1}}
				\Tr\left(\Phi(xx^t)yy^t\right) \dd\sigma_2(y)\right)\dd\sigma_1(x)\\
			&=& \int_{S^{n-1}} \Tr\left(\Phi(xx^t) \int_{S^{m-1}} yy^t \dd\sigma_2(y)\right)\dd\sigma_1(x)\\
			&\underbrace{=}_{\text{Lemma } \ref{pom-lem}}& 
				\frac{1}{m} \int_{S^{n-1}} \Tr\left(\Phi(xx^t)\right)\dd\sigma_1(x)=
			\frac{1}{m} \Tr\left(\Phi\left(\int_{S^{n-1}} xx^t\dd\sigma_1(x)\right)\right)\\
			&\underbrace{=}_{\text{Lemma } \ref{pom-lem}}&  \frac{1}{nm} \Tr\left(\Phi(I_n)\right),
		\end{eqnarray*}
	where the third and the fifth equality follow by linearity of the maps $\Tr$ and $\Phi$.
	Therefore
		\[p_\Phi\in \cH^{(n,m)}_{(2,2)}\quad \text{if and only if}\quad 
			\Tr\left(\Phi(I_n)\right)=nm.\qedhere\] 
\end{proof}

\subsection{Extension of the results to all real or complex matrices}\label{ext-to-real-or-complex}

In this subsection we connect linear maps on the full matrix algebra over
$\FF$ where $\FF\in \{\RR,\CC\}$ with the subspace of real biforms. This connection will translate
the question of comparing the size of the cone of completely positive maps with the size of the cone of positive maps to the question of comparing the size of the cone of sums of squares biforms with the size of the cone of positive biforms on the subspace of biforms.

	We denote by $\cL(M_n(\FF),M_m(\FF))$ the vector space of all $\ast$-linear maps
	from $M_n(\FF)$ to $M_m(\FF)$. 
	Let $\Phi^\CC:M_n(\CC)\to M_m(\CC)$ stand for the complexification of a $\ast$-linear map $\Phi:M_n(\RR)\to M_m(\RR)$, i.e.,
		$$\Phi^{\CC}(A+iB):=\Phi(A)+i\Phi(B)$$
	where $A,B\in M_n(\RR).$
It is easy to check that $\Phi^\CC$ is $\ast$-linear. 
We write
	$$\cL^\CC(M_n(\RR),M_m(\RR)):=\left\{\Phi^{\CC}\mid \Phi\in \cL(M_n(\RR),M_m(\RR))\right\}$$ 
for the real vector subspace of $\cL(M_n(\CC),M_m(\CC))$ obtained by complexifying the maps from $\cL(M_n(\RR),M_m(\RR))$.
	
		There is a natural bijection $\Gamma$ between $\ast$-linear maps $\cL(M_n(\CC),M_m(\CC))$ and symmetric multiforms 
	$\symm \CC[\z,\overline{\z},\w,\overline{\w}]_{1,1,1,1}$
	given by
		\begin{equation} \label{identification-F}
			\Gamma:\cL(M_n(\CC),M_m(\CC))\to \symm\CC[\z,\overline{\z},\w,\overline{\w}]_{1,1,1,1} ,\quad
			\Phi\mapsto p_\Phi(\z,\w):=\w^\ast \Phi(\overline\z\; \overline{\z}^\ast)\w.
		\end{equation}
	Note that 
		\begin{equation}\label{cor-real-sym}
			\Gamma\left(\cL^\CC(M_n(\RR),M_m(\RR))\right)=\symm \RR[\z,\overline{\z},\w,\overline{\w}]_{1,1,1,1}.
		\end{equation}
	Thus  $\Gamma$ converts
	 properties of $\ast$-linear maps in $\cL(M_n(\CC),M_m(\CC))$ and $\cL^\CC(M_n(\RR),M_m(\RR))$ to corresponding properties of multiforms
	in $\symm\CC[\z,\overline{\z},\w,\overline{\w}]_{1,1,1,1}$ and $\symm\RR[\z,\overline{\z},\w,\overline{\w}]_{1,1,1,1}$, respectively. 
	Positivity (resp.\ complete positivity) of a map $\Phi$ corresponds to nonnegativity (resp.\ being a sum of hermitian squares) of the polynomial $p_{\Phi}$:

\begin{proposition} \label{map-poly-F}
	Let $\Phi:M_n(\FF)\to M_m(\FF)$ be a $\ast$-linear map.
	If $\FF=\CC$, then:
		\begin{enumerate}[label={\rm(\arabic*)}]	
					\item $\Phi$ is positive iff $p_{\Phi}$ is nonnegative;
					\item\label{cp-sos-F} $\Phi$ is completely positive iff 
						$p_{\Phi}=\sum_{r}q_r^\ast q_r$ is a sum of hermitian squares with $q_m\in \CC[\z, \w]_{1,1}$.
		\end{enumerate}
	If $\FF=\RR$, then:	 
				\begin{enumerate}[label={\rm(\arabic*)}]		
					\setcounter{enumi}{2}			
					\item\label{pos-F-R} $\Phi$ is positive iff $p_{\Phi^{\CC}}|_{\RR^n\times \RR^m}$ is nonnegative. 
					\item \label{cp-sos-F-R} 
						$\Phi$ is completely positive iff $p_{\Phi^\CC}=\sum_{r}q_r^\ast q_r$ is a sum of hermitian squares with $q_m\in \RR[\z, \w]_{1,1}$.
		\end{enumerate}
\end{proposition}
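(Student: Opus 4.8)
The plan is to follow the proof of Proposition~\ref{map-poly} closely, adapting it to the facts that $M_n(\FF)$ is a full matrix algebra (so no Arveson extension step is needed) and that for $\FF=\CC$ the involution is conjugate transposition. Throughout, write $V^{\ast}=\overline{V}^{\,t}$ for a matrix $V$, which is just $V^{\,t}$ when $V$ is real.

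First I would settle the two positivity assertions, which are elementary. Since $\overline{\z}\,\overline{\z}^{\,\ast}$ runs over all rank one positive semidefinite matrices in $\HH_n$ as $\z$ runs over $\CC^n$, and these generate the psd cone, $\Phi$ being positive is equivalent to $p_\Phi(\z,\w)=\w^\ast\Phi(\overline{\z}\,\overline{\z}^{\,\ast})\w\ge 0$ for all $(\z,\w)\in\CC^n\times\CC^m$; this is assertion~(1). For $\FF=\RR$, note that a $\ast$-linear $\Phi\colon M_n(\RR)\to M_m(\RR)$ preserves symmetric matrices, and ``$\Phi$ positive'' means that it maps psd matrices of $\Sym_n$ to psd matrices of $\Sym_m$. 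Evaluating $p_{\Phi^\CC}$ at real points $\z\in\RR^n$, $\w\in\RR^m$ gives $\overline{\z}\,\overline{\z}^{\,\ast}=\z\z^{\,t}$ and $\w^\ast=\w^{\,t}$, so $p_{\Phi^\CC}(\z,\w)=\w^{\,t}\Phi(\z\z^{\,t})\w$; since every psd matrix of $\Sym_n$ is a nonnegative combination of rank one matrices $\z\z^{\,t}$ with $\z\in\RR^n$, positivity of $\Phi$ is equivalent to $p_{\Phi^\CC}|_{\RR^n\times\RR^m}\ge 0$, which is assertion~(3). I would stress here that one tests $p_{\Phi^\CC}$ only on the real locus: nonnegativity of $p_{\Phi^\CC}$ on all of $\CC^n\times\CC^m$ would amount to positivity of $\Phi^\CC$, a strictly stronger condition.

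For the complete positivity assertions (2) and (4) the key external input is the finite dimensional Choi--Stinespring representation: a $\ast$-linear map $\Phi\colon M_n(\FF)\to M_m(\FF)$ is completely positive if and only if $\Phi(X)=\sum_{r=1}^\ell V_rXV_r^\ast$ for some $V_r\in M_{m\times n}(\FF)$. For $\FF=\CC$ this is Choi's theorem; for $\FF=\RR$ it is the real finite dimensional version already used in the proof of Proposition~\ref{map-poly} (see \cite[\S3.1]{HKM1}), which one may also obtain directly by applying $n$-positivity of $\Phi$ to the rank one psd matrix $\bigl(\sum_i e_i\otimes e_i\bigr)\bigl(\sum_j e_j\otimes e_j\bigr)^{t}=\sum_{i,j}E_{ij}\otimes E_{ij}$ to see that the (symmetric, by $\ast$-linearity) Choi matrix $C_\Phi=\sum_{i,j}E_{ij}\otimes\Phi(E_{ij})$ is psd, and then reshaping a rank one decomposition of $C_\Phi$ into the $V_r$. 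Granting this, the direction $(\Rightarrow)$ in both (2) and (4) is a computation: setting $q_r(\z,\w):=\overline{\z}^{\,\ast}V_r^\ast\w$, which is bilinear, hence lies in $\FF[\z,\w]_{1,1}$ and is real when $V_r$ is real, one checks $q_r^\ast=\w^\ast V_r\overline{\z}$ and therefore
$$p_\Phi(\z,\w)=\w^\ast\Bigl(\textstyle\sum_r V_r\,\overline{\z}\,\overline{\z}^{\,\ast}V_r^\ast\Bigr)\w=\sum_r(\w^\ast V_r\overline{\z})(\overline{\z}^{\,\ast}V_r^\ast\w)=\sum_r q_r^\ast q_r,$$
and identically for $p_{\Phi^\CC}$ when $\FF=\RR$, using that $\Phi^\CC(X)=\sum_r V_rXV_r^\ast$ with the \emph{same} real $V_r$.

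For the converse $(\Leftarrow)$, given $p_\Phi=\sum_r q_r^\ast q_r$ (resp.\ $p_{\Phi^\CC}=\sum_r q_r^\ast q_r$) with $q_r\in\FF[\z,\w]_{1,1}$, I would read the coefficients of each $q_r$ off into a matrix $V_r\in M_{m\times n}(\FF)$, reverse the previous display to obtain $p_{\widetilde\Phi}=p_\Phi$ (resp.\ $p_{\widetilde\Phi^\CC}=p_{\Phi^\CC}$) for the completely positive map $\widetilde\Phi(X)=\sum_r V_rXV_r^\ast$, and conclude $\Phi=\widetilde\Phi$ from the injectivity of the bijection $\Gamma$ of \eqref{identification-F} (for $\FF=\RR$ combined with \eqref{cor-real-sym} and with the injectivity of complexification $\Phi\mapsto\Phi^\CC$). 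The only genuinely nontrivial ingredient is the real finite dimensional Choi--Stinespring representation just quoted; beyond that the work is bookkeeping of the involution, and that is where I would be most careful --- checking that the $q_r$ coming from a Kraus decomposition are honestly bilinear (linear in $\z$ and in $\w$), that they have real coefficients exactly when $\FF=\RR$, and that the formal product $q_r^\ast q_r$, formed by conjugating coefficients and exchanging $\z\leftrightarrow\overline{\z}$, $\w\leftrightarrow\overline{\w}$, reproduces the rank one summand $\w^\ast V_r\overline{\z}\,\overline{\z}^{\,\ast}V_r^\ast\w$ of the multiform.
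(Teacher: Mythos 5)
Your proposal is correct, and its overall skeleton is the one the paper uses: the positivity statements via rank-one decompositions of psd matrices, and the cp statements via a Kraus/Choi representation $\Phi(X)=\sum_r V_rXV_r^\ast$ with the converse obtained by reading the coefficients of the $q_r$ back into matrices and invoking injectivity of $\Gamma$ (you are also right that no Arveson extension is needed here, unlike in Proposition \ref{map-poly}). The one place you genuinely diverge from the paper is part \ref{cp-sos-F-R}: the paper first observes, via the Choi matrix $[\Phi(E_{ij})]_{i,j}$, that $\Phi$ is cp iff $\Phi^\CC$ is cp, applies part \ref{cp-sos-F} to get a decomposition $p_{\Phi^\CC}=\sum_r q_r^\ast q_r$ with complex $q_r\in\CC[\z,\w]_{1,1}$, and only then produces real squares by splitting each $q_r$ into the polynomials $q_{r,1},q_{r,2}$ with coefficients $\tfrac{a+\overline a}{2}$ and $\tfrac{a-\overline a}{2i}$, checking that the cross terms cancel because $p_{\Phi^\CC}\in\symm\RR[\z,\overline\z,\w,\overline\w]_{1,1,1,1}$. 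You instead extract real Kraus operators $V_r$ directly from a real rank-one decomposition of the real symmetric psd Choi matrix, so your $q_r$ are real from the start. Both routes are valid; yours avoids the a posteriori symmetrization (and the cross-term cancellation check) at the cost of proving the real version of Choi's theorem, which your sketch via $n$-positivity applied to $\sum_{i,j}E_{ij}\otimes E_{ij}$ handles correctly.
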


\begin{proof}
	The proof of Proposition \ref{map-poly-F} is analogous to the proof of Proposition \ref{map-poly}. 	
	Since in the case $\FF=\RR$, positivity of $\Phi$ is determined on real symmetric matrices, \ref{pos-F-R} is clear. 
	Since a $\ast$-linear map $\Phi$ is cp iff the Choi matrix $[\Phi(E_{ij})]_{i,j}$ is psd \cite[Theorem 3.14]{PAU} where $E_{ij}$ stand for the matrix units, 
	$\Phi$ in \ref{cp-sos-F-R} is cp iff $\Phi^\CC$ is cp. Hence \ref{cp-sos-F-R} follows from \ref{cp-sos-F}
	by noticing that since $p_{\Phi^\CC}$ belongs to $\symm \RR[\z,\overline{\z},\w,\overline{\w}]_{1,1,1,1}$,
	we can further replace each 
	$\displaystyle q_r=\sum_{j,k}a^{(r)}_{jk}z_jw_k$ in 
	$\displaystyle p_{\Phi^\CC}=\sum_{r}q_r^\ast q_r$ with 
	$\displaystyle q_{r,1}=\sum_{j,k} \frac{a^{(r)}_{jk}+\overline{a^{(r)}_{jk}}}{2} z_jw_k$ and 
	$\displaystyle q_{r,2}=\sum_{j,k} \frac{a^{(r)}_{jk}-\overline{a^{(r)}_{jk}}}{2i} z_jw_k$ such that  
	$\displaystyle \sum_r q_r^\ast q_r=\sum_r \left(q_{r,1}^\ast q_{r,1}+q_{r,2}^\ast q_{r,2}\right)$ and $q_{r,1},q_{r,2}\in \RR[\z,\w]_{1,1}$.
\end{proof}

Let $\Pos(M_n(\FF),M_m(\FF))$ and $\CP(M_n(\FF), M_m(\FF))$ denote the cone of positive maps
and the cone of completely positive maps from $M_n(\FF)$ to $M_m(\FF)$, respectively.
By Proposition \ref{map-poly-F}, comparing the cones $\Pos(M_n(\CC),M_m(\CC))$ and $\CP(M_n(\CC), M_m(\CC))$
is equivalent to comparing the cones $\Pos_\CC$ and $\Sq_\CC$, while comparing the cones $\Pos(M_n(\RR),M_m(\RR))$ and $\CP(M_n(\RR), M_m(\RR))$
is equivalent to comparing the cones 
	$$\mathcal{P}_\RR:=\left\{p\in \symm \RR[\z,\overline \z, \w,\overline\w]_{1,1,1,1}\colon p|_{\RR^n\times \RR^m}\geq 0 \right\}$$ 
and $\Sq_\RR$. 
Since $\Pos_\RR\subset\mathcal{P}_\RR$ the upper bound for the probability of a random map from $\Pos(M_n(\RR),M_m(\RR))$ belonging to $\CP(M_n(\RR),M_m(\RR))$ 
can be obtained by comparing the cones $\Pos_\RR$ and $\Sq_\RR$. By identifying $\symm \FF[\z,\overline{\z},\w,\overline{\w}]_{1,1,1,1}$ with a subspace
$\cC_\FF$ of $\RR[\x,\y]_{2,2}$ where $\x=(x_1,\ldots,x_{2n})$ and $\y=(y_1,\ldots,y_{2m})$, comparing the cones $\Pos_\FF$ and $\Sq_\FF$ is equivalent to comparing the cones 
$\Pos_{\cC_{\FF}}$ and $\Sq_{\cC_{\FF}}$. We also  write $\mathcal{P}_{\cC_{\RR}}\subset \cC_{\RR}$ for the image of the cone $\mathcal{P}_{\RR}$ under the identification between $\symm \RR[\z,\overline{\z},\w,\overline{\w}]_{1,1,1,1}$ and $\cC_{\RR}$.

We define the probability $p_{n,m}^{\FF}$ that a randomly chosen positive map
$\Phi:M_n(\FF)\to M_m(\FF)$ is completely positive to be the ratio 
	$$p_{n,m}^\FF=\left\{\begin{array}{cl}
		\frac{\Vol \widetilde{\Sq}_{\cC_\CC} }{\Vol \widetilde{\Pos}_{\cC_\CC}},& \text{if }\FF=\CC,\\ 
		\frac{\Vol \widetilde{\Sq}_{\cC_\RR} }{\Vol \widetilde{\mathcal P}_{\cC_{\RR}}},& \text{if }\FF=\RR, 
		\end{array} \right.$$
where $\widetilde{\mathcal P}_{\cC_\RR}:=\left\{f\in \cM_{\cC_\RR}\colon f+(\sum_{i=1}^{2n}x_i^2)(\sum_{j=1}^{2m} y_j^2)\in \mathcal P_{\cC_\RR}\right\}.$

\begin{corollary}\label{asimptotika-F}
	For $n,m \in \NN$, $n\geq 3, m\geq 3$, the probability $p_{n,m}^\FF$ that a random positive map
	$\Phi:M_n(\FF)\to M_m(\FF)$ is completely
	positive, is bounded by
		$$p_{n,m}^\FF\leq 
			\frac{\Vol \widetilde{\Sq}_{\cC_\FF} }{\Vol \widetilde{\Pos}_{\cC_\FF}}< 
			\left( \left( 2^{28-\dim_{\RR}\FF} \right)^{\frac{1}{2}}\cdot 3^{-\frac{5}{2}}\cdot 5^2\cdot 10^{\frac{2}{9}}\cdot
					\frac{1}{\sqrt{\min(n,m)-\frac{1}{2}}}  \right)^{D_{\cM_{\cC_\FF}}},$$
	where 
		$D_{\cM_{\cC_{\FF}}}=\left\{  
		\begin{array}{lc} 
			n^2m^2-1,&\; \text{if }\FF=\CC,\\
			\frac{nm(nm+1)}{2}-1,&\;  \text{if }\FF=\RR.
		  \end{array} \right.$
	In particular, if 
		$\min(n,m)\geq 
			\frac{\left(2^{28-\dim_{\RR}\FF} \right) \cdot 5^4 \cdot 10^{\frac{4}{9}}}{3^{5}}$, then 
		$$\lim_{\max(n,m)\to\infty}
			p_{n,m}^\FF=0.$$
\end{corollary}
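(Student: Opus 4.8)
The plan is to reduce the statement to the two volume estimates already proved---Theorem~\ref{lower-bound-F} for $\widetilde{\Pos}_{\cC_\FF}$ and Theorem~\ref{squares-F} for $\widetilde{\Sq}_{\cC_\FF}$---and then chase constants. First I would use Proposition~\ref{map-poly-F}: under the identification of $\ast$-linear maps with symmetric multiforms (and then with biforms in $\cC_\FF$), a map $\Phi$ is positive exactly when the corresponding polynomial lies in the nonnegativity cone and completely positive exactly when it lies in the sum-of-(hermitian-)squares cone. This gives $p_{n,m}^\CC=\Vol\widetilde{\Sq}_{\cC_\CC}/\Vol\widetilde{\Pos}_{\cC_\CC}$ by definition, and $p_{n,m}^\RR=\Vol\widetilde{\Sq}_{\cC_\RR}/\Vol\widetilde{\mathcal P}_{\cC_\RR}$; since $\Pos_{\cC_\RR}\subseteq\mathcal P_{\cC_\RR}$ forces $\widetilde{\Pos}_{\cC_\RR}\subseteq\widetilde{\mathcal P}_{\cC_\RR}$ and hence $\Vol\widetilde{\Pos}_{\cC_\RR}\le\Vol\widetilde{\mathcal P}_{\cC_\RR}$, in both cases
\[
p_{n,m}^\FF\ \le\ \frac{\Vol\widetilde{\Sq}_{\cC_\FF}}{\Vol\widetilde{\Pos}_{\cC_\FF}},
\]
so it suffices to bound the right-hand ratio.

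Next I would write
\[
\left(\frac{\Vol\widetilde{\Sq}_{\cC_\FF}}{\Vol\widetilde{\Pos}_{\cC_\FF}}\right)^{\!\frac{1}{D_{\cM_{\cC_\FF}}}}
=\frac{\bigl(\Vol\widetilde{\Sq}_{\cC_\FF}/\Vol B_{\cM_{\cC_\FF}}\bigr)^{1/D_{\cM_{\cC_\FF}}}}{\bigl(\Vol\widetilde{\Pos}_{\cC_\FF}/\Vol B_{\cM_{\cC_\FF}}\bigr)^{1/D_{\cM_{\cC_\FF}}}},
\]
bound the numerator above by Theorem~\ref{squares-F} and the denominator below by Theorem~\ref{lower-bound-F}, obtaining
\[
\left(p_{n,m}^\FF\right)^{\frac{1}{D_{\cM_{\cC_\FF}}}}
\ \le\ \frac{2^{10+\frac{3-\dim_\RR\FF}{2}}\,\sqrt{3}\,(nm-1)^{-1/2}}{3^{3}\,10^{-20/9}\,2^{-1/2}\,\max(n,m)^{-1/2}}
\ =\ \bigl(2^{28-\dim_\RR\FF}\bigr)^{1/2}\,3^{-5/2}\,5^{2}\,10^{2/9}\cdot\frac{\sqrt{\max(n,m)}}{\sqrt{nm-1}},
\]
where I have collected the powers of $2$ (the exponent becomes $10+\tfrac{3-\dim_\RR\FF}{2}+\tfrac12=\tfrac{28-\dim_\RR\FF}{2}$) and used $\sqrt3\cdot 3^{-3}=3^{-5/2}$ and $10^{20/9}=2^{2}5^{2}10^{2/9}$. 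Then I would invoke
\[
\frac{\sqrt{\max(n,m)}}{\sqrt{nm-1}}=\Bigl(\min(n,m)-\tfrac{1}{\max(n,m)}\Bigr)^{-1/2}<\Bigl(\min(n,m)-\tfrac12\Bigr)^{-1/2},
\]
valid since $\max(n,m)\ge 3>2$, and raise to the power $D_{\cM_{\cC_\FF}}$ to reach the displayed bound; the dimension formulas $D_{\cM_{\cC_\CC}}=n^2m^2-1$ and $D_{\cM_{\cC_\RR}}=\tfrac12 nm(nm+1)-1$ are read off from Remark~\ref{dim-remark}.

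For the asymptotic claim I would set $N_0:=2^{28-\dim_\RR\FF}\,5^{4}\,10^{4/9}\,3^{-5}$, which is precisely the square of the constant above. If $\min(n,m)\ge N_0$ then, $\min(n,m)$ being an integer and $N_0$ not, we have $\min(n,m)>N_0$; hence for $\max(n,m)$ large enough $\min(n,m)-1/\max(n,m)>N_0$, so the base $\bigl(p_{n,m}^\FF\bigr)^{1/D_{\cM_{\cC_\FF}}}$ is $<1$, and in fact bounded away from $1$ uniformly in $\max(n,m)$ (its square tends to $N_0/\min(n,m)<1$). Since $D_{\cM_{\cC_\FF}}$ is a polynomial of positive degree in $\max(n,m)$ once $\min(n,m)$ is fixed, $D_{\cM_{\cC_\FF}}\to\infty$, and therefore $p_{n,m}^\FF\to 0$ as $\max(n,m)\to\infty$.

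I do not expect a genuine obstacle here: all the analytic weight is carried by Theorems~\ref{lower-bound} and~\ref{squares} together with their symmetric-multiform counterparts Theorems~\ref{lower-bound-F} and~\ref{squares-F}, so what remains is the arithmetic of assembling the constants. The only two points requiring a moment's care are the inequality $\sqrt{\max(n,m)}/\sqrt{nm-1}\le(\min(n,m)-\tfrac12)^{-1/2}$ and, in the real case, the passage from $\widetilde{\mathcal P}_{\cC_\RR}$ to the smaller set $\widetilde{\Pos}_{\cC_\RR}$ in the denominator, which is exactly what makes Theorem~\ref{lower-bound-F} applicable there.
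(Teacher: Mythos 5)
Your proposal is correct and follows essentially the same route as the paper: combine the lower bound of Theorem~\ref{lower-bound-F} with the upper bound of Theorem~\ref{squares-F}, use the inequality $\sqrt{\max(n,m)}/\sqrt{nm-1}<(\min(n,m)-\tfrac12)^{-1/2}$, and invoke the definition of $p_{n,m}^\FF$ (your explicit treatment of $\widetilde{\mathcal P}_{\cC_\RR}\supseteq\widetilde{\Pos}_{\cC_\RR}$ in the real case is a detail the paper leaves implicit). One cosmetic slip: the parenthetical identity $10+\tfrac{3-\dim_\RR\FF}{2}+\tfrac12=\tfrac{28-\dim_\RR\FF}{2}$ is off by $2$ as written --- the missing $2^2$ comes from the factorization $10^{20/9}=2^2 5^2 10^{2/9}$ which you do record separately, so the final constant is right.
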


\begin{proof}
	By combining the lower bound in Theorem \ref{lower-bound-F} with the upper bound in Theorem \ref{squares-F} 
	as in the proof of Theorem \ref{cor-1} 
	and observing that 
		$$\frac{\sqrt{\max(n,m)}}{\sqrt{nm-1}} =\frac{1 }{\sqrt{\min(m,n)-\frac{1}{\max(n,m)}}}<\frac{1}{\sqrt{\min(m,n)-\frac{1}{2}}},$$
	Corollary \ref{asimptotika-F} follows by the definition of $p_{n,m}^\FF$.\qedhere
\end{proof}

%

\section{Constructing positive maps that are not completely positive}\label{sec:algo}

By Proposition \ref{map-poly},
 each biform $f\in \RR[\x,\y]_{2, 2}$ that is positive but 	not a sum of squares yields
 an example of a  positive map $\Phi:\Sym_n\to \Sym_m$ that is not completely positive.
By Proposition \ref{ext-of-pos-to-F} below, all extensions of $\Phi$ to $M_n(\RR)$ and the complexification of the trivial extension are positive but not completely positive. 
In this section we specialize
the Blekherman-Smith-Velasco algorithm \cite[Procedure 3.3]{BSV} to produce many examples of positive biforms of bidegree (2,2) that are not sums of squares.

\subsection{Extending positive maps from real symmetric matrices to the full matrix algebra $M_n(\FF)$,  $\FF\in \{\RR,\CC\}$}

 Let $\mathbb{K}_n$ be the vector space of real antisymmetric $n\times n$ matrices, i.e.,
    $$\mathbb{K}_n=\left\{A\in M_n(\RR) \mid A^\ast=-A\right\}.$$
    The vector space $M_n(\RR)$ can be expressed as the direct sum
        $$M_n(\RR)=\Sym_n\oplus \mathbb{K}_n,$$
    and a $\ast$-linear map $\Phi:M_n(\RR)\to M_m(\RR)$  uniquely decomposes as a direct sum
        $$\Phi=\Phi|_{\Sym_n}\oplus \Phi|_{\mathbb{K}_n},$$
    where
        $$\Phi|_{\Sym_n}:\Sym_n\to\Sym_m\quad \text{and}\quad
            \Phi|_{\mathbb{K}_n}:\mathbb{K}_n\to \mathbb{K}_m$$
    are the restrictions of $\Phi$ to $\Sym_n$ and $\mathbb{K}_n$, respectively.
  Conversely, given linear maps  $\Phi:\Sym_n\to\Sym_m$ and $\Psi:\mathbb{K}_n\to\mathbb{K}_m$, the map
 $\Gamma:=\Phi\oplus\Psi:M_n(\RR)\to M_m(\RR)$ defined by $\Gamma(S+A):=\Phi(A)+\Psi(A)$ is readily seen to be $\ast$-linear.
 	
	Recall that the complexification $\Phi^\CC:M_n(\CC)\to M_m(\CC)$ of a $\ast$-linear map $\Phi:M_n(\RR)\to M_m(\RR)$ is defined by
		$$\Phi^{\CC}(A+iB):=\Phi(A)+i\Phi(B)$$
	where $A,B\in M_n(\RR).$

\begin{proposition}\label{ext-of-pos-to-F}
	Let $\Phi:\Sym_n\to\Sym_m$ be a positive but not completely positive map and $\Psi:\mathbb{K}_n\to\mathbb{K}_m$ a linear map.
	Then:
	\begin{enumerate}[label=\rm{(\arabic*)}]
		\item\label{ext-of-pos-to-F-1} The map $\Gamma:=\Phi\oplus \Psi:M_n(\RR)\to M_m(\RR)$ is positive but not completely positive.
		\item\label{ext-of-pos-to-F-2}	 Let $\mathbf{0}:\mathbb{K}_n\to\mathbb{K}_m$ be the trivial map, i.e., $\mathbf{0}(A)=0$ for all $A\in \mathbb{K}_n$.
			The map $(\Phi\oplus\mathbf 0)^\CC:M_n(\CC)\to M_m(\CC)$ is positive but not completely positive. 
	\end{enumerate}
\end{proposition}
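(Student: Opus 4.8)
The plan is to treat each of the two items by splitting it into a (routine) verification of positivity and a verification that complete positivity fails, the latter obtained by inheriting a witness of the failure of $k$-positivity for $\Phi$.

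For \ref{ext-of-pos-to-F-1}, positivity of $\Gamma=\Phi\oplus\Psi$ is immediate: a positive semidefinite matrix $X\in M_n(\RR)$ is by definition symmetric, hence $X\in\Sym_n$, and $\Gamma$ restricted to $\Sym_n$ is exactly $\Phi$, so $\Gamma(X)=\Phi(X)\succeq0$ (here $\Psi$ plays no role whatsoever). For the failure of complete positivity I would argue as follows. Since $\Phi$ is not completely positive there exist $k\in\NN$ and a matrix $Z\in M_k(\RR)\otimes\Sym_n$ with $Z\succeq0$ (as an element of $M_{kn}(\RR)$) but $\Phi^{(k)}(Z)\not\succeq0$. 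The key observation is that $\Gamma^{(k)}$ and $\Phi^{(k)}$ act by $M\otimes A\mapsto M\otimes(\,\cdot\,)(A)$ and agree on $\Sym_n$, hence they agree on all of $M_k(\RR)\otimes\Sym_n$; in particular $\Gamma^{(k)}(Z)=\Phi^{(k)}(Z)\not\succeq0$ while $Z\succeq0$, so $\Gamma$ is not $k$-positive and therefore not completely positive.

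For \ref{ext-of-pos-to-F-2}, write $\Lambda:=\Phi\oplus\mathbf{0}:M_n(\RR)\to M_m(\RR)$, which is positive by \ref{ext-of-pos-to-F-1}. To see that $\Lambda^\CC$ is positive, take $H\in\HH_n$ with $H\succeq0$ and decompose $H=A+iB$ with $A\in\Sym_n$ and $B\in\mathbb{K}_n$; testing $v^\ast H v\geq0$ on real vectors $v$ (for which $v^TBv=0$) gives $v^TAv\geq0$, i.e.\ $A\succeq0$, and then $\Lambda^\CC(H)=\Lambda(A)+i\Lambda(B)=\Phi(A)+i\cdot0=\Phi(A)\succeq0$. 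For the failure of complete positivity I would reuse the same witness $Z\in M_k(\RR)\otimes\Sym_n$: a real symmetric psd matrix is hermitian psd, so $Z\succeq0$ in $M_{kn}(\CC)$ as well, and since $\Lambda^\CC$ restricted to $\Sym_n\subseteq M_n(\RR)\subseteq M_n(\CC)$ equals $\Phi$, the same computation yields $(\Lambda^\CC)^{(k)}(Z)=\Phi^{(k)}(Z)\not\succeq0$. (Alternatively one can invoke Proposition \ref{map-poly-F}\ref{cp-sos-F} together with Proposition \ref{map-poly}\ref{cp-sos}: $p_{\Lambda^\CC}|_{\RR^n\times\RR^m}=p_\Phi$ is not a sum of squares, hence $p_{\Lambda^\CC}$ is not a sum of hermitian squares.)

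I do not anticipate a genuine obstacle here; the proof is essentially bookkeeping. The two points that need a little care are (i) making precise that $\Gamma^{(k)}$, respectively $(\Lambda^\CC)^{(k)}$, genuinely restricts to $\Phi^{(k)}$ on the subspace $M_k(\RR)\otimes\Sym_n$ containing the witness $Z$, so that non-positivity is inherited independently of $\Psi$ and of the complexification; and (ii) the elementary linear-algebra fact that the real symmetric part of a hermitian positive semidefinite matrix is itself positive semidefinite, which is what makes $\Lambda^\CC$ positive.
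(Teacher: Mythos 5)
Your proof is correct and follows essentially the same route as the paper: positivity of $\Gamma$ by restriction to $\Sym_n$, and positivity of $(\Phi\oplus\mathbf 0)^\CC$ via the observation that the real part $X_{\re}$ of a psd hermitian $X$ is itself psd. The only (minor) divergence is in deducing that the complexification is not cp: the paper invokes the Choi-matrix criterion to reduce to the real case, whereas you transport a concrete witness $Z\in M_k(\RR)\otimes\Sym_n$ of the failure of $k$-positivity through the ampliation, which is slightly more elementary and equally valid.
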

 
 \begin{proof}
 		To prove \ref{ext-of-pos-to-F-1} it suffices to observe that $\Gamma$ is positive iff its restriction $\Gamma|_{\Sym_n}=\Phi$ is positive and that 
	$\Gamma$ being cp would imply that $\Phi$ is cp. 
	As in the proof of Proposition \ref{map-poly-F} note that $(\Phi\oplus\mathbf 0)^\CC$ is cp iff $\Phi\oplus\mathbf 0$ is cp. Thus to
	prove \ref{ext-of-pos-to-F-2} it only remains to show that 
	$(\Phi\oplus\mathbf 0)^\CC(X)$ is psd for all psd matrices $X\in M_n(\CC)$. Decompose a psd matrix $X$ as $X=X_{\re}+iX_{\im}$ where $X_{\re}, X_{\im}\in M_n(\RR)$.
	Since $X=X^\ast$, it follows that $X_{\re}\in \Sym_n$ and $X_{\im}\in \mathbb{K}_n$. 
	For all $v\in \RR^n$ we have that
		$v^\ast X_{\re} v=v^\ast Xv\geq 0.$
	Hence $X_{\re}$ is psd. Thus $(\Phi\oplus\mathbf 0)^\CC(X)=\Phi(X_{\re})$ is psd which concludes the proof of \ref{ext-of-pos-to-F-2}.
 \end{proof}

\subsection{Specialization of the Blekherman-Smith-Velasco algorithm}

To use \cite[Procedure 3.3]{BSV} we have to observe first that biquadratic forms are in bijective correspondence with quadratic forms on the Segre variety (see \cite[Example 5.6]{BSV}). Indeed, let
\begin{eqnarray*}
		 \sigma_{n,m}:\PP^{n-1}\times \PP^{m-1} &\to& \PP^{nm-1},\\
	([x_1:\ldots:x_n], [y_1:\ldots:y_m]) &\mapsto& [x_1y_1:x_1y_2:\ldots: x_1y_m:\ldots:x_ny_m].	\end{eqnarray*}
be the Segre embedding. Its image $\sigma _{n,m}(\PP^{n-1}\times \PP^{m-1})$ is the zero locus of the ideal $I_{n,m}\subseteq \RR[z_{11},z_{12},\ldots ,z_{1m},\ldots ,z_{nm}]$ generated by all $2\times 2$ minors of the matrix $\left(z_{ij}\right)_{i,j}$. Moreover, the ideal $I_{n,m}$ is radical and consists of all polynomials vanishing on $\sigma_{n,m}(\PP^{n-1}\times \PP^{m-1})$ \cite[p.\ 98]{Har92}. It is also well known that $\sigma_{n,m}(\PP^{n-1}\times \PP^{m-1})$ is smooth (being the determinantal variety of all $n\times m$ matrices of rank at most $1$) \cite[p.\ 184-185]{Har92} and that its degree equals $\binom{n+m-2}{n-1}$ \cite[p.\ 233]{Har92}. We write $V(I_{n,m})$ for the image of the Segre embedding $\sigma _{n,m}(\PP^{n-1}\times \PP^{m-1})$, i.e.,
$$V(I_{n,m})=
		\{[z_{11}:\ldots:z_{nm}]\in \PP^{nm-1}\colon f(\z)=0\;\text{for every }f\in I_{n,m}\},$$
where
$$\z=(z_{11},z_{12},\ldots, z_{1m}, \ldots,z_{nm}),$$
and $V_\RR(I_{n,m})$ for the subset of its real points.

Since $I_{n,m}$ is the homogeneous ideal of all polynomials that vanish on $V(I_{n,m})$, the quotient ring $\CC[\z]/{I_{n,m}}$ is the coordinate ring $\CC[V(I_{n,m})]$ of the variety $V(I_{n,m})$. Moreover, the Segre embedding $\sigma_{n,m}$ induces the injective ring homomorphism $\sigma_{n,m}^\#:\CC[\z]/I_{n,m}\to \CC[\x,\y]$ satisfying $\sigma_{n,m}^\#(z_{ij}+I_{n,m}) =x_iy_j$ for $1\le i\le n,1\le j\le m$. The restriction of $\sigma_{n,m}^\#$ to the real quadratic forms is then a (linear) bijective correspondence between quadratic forms from $\RR[\z]/{I_{n,m}}$ and biforms from $\RR[\x,\y]_{2, 2}$.

\begin{lemma} \label{prehod-na-zij}
		Let  $f\in \RR[\x,\y]_{2, 2}$ be a biform of bidegree (2,2). Then:
	\begin{enumerate}[label={\rm(\arabic*)}]
		\item\label{pt-1-prehod} If $f\in \RR[\x,\y]_{2, 2}$ is a sum of squares, then it is a sum of squares of biforms from $\RR[\x,\y]_{1, 1}$.
		\item\label{pt-2-prehod} The biform $f\in \RR[\x,\y]_{2, 2}$ is a sum of squares if and only if the quadratic form ${\sigma_{n,m}^\#}^{-1}(f)\in \RR[\z]/I_{n,m}$ is a sum of squares.
	\end{enumerate}
\end{lemma}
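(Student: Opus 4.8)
The plan is to prove \ref{pt-1-prehod} first, by a bihomogenization argument, and then to deduce \ref{pt-2-prehod} from it together with the fact (recorded just before the lemma, with the evident analogue for linear forms) that $\sigma_{n,m}^\#$ is an injective ring homomorphism carrying the linear forms of $\RR[\z]/I_{n,m}$ bijectively onto $\RR[\x,\y]_{1,1}$ and the quadratic forms of $\RR[\z]/I_{n,m}$ bijectively onto $\RR[\x,\y]_{2,2}$.

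For \ref{pt-1-prehod}, I would start from an arbitrary representation $f=\sum_i h_i^2$ with $h_i\in\RR[\x,\y]$, view $\RR[\x,\y]$ as $(\RR[\y])[\x]$, and split each nonzero $h_i$ into its components $h_i=\sum_{a\ge 0}h_i^{[a]}$ homogeneous of degree $a$ in $\x$. Since $f$ is homogeneous of degree $2$ in $\x$ and the top-$\x$-degree part of $\sum_i h_i^2$ equals $\sum_{i:\,\deg_\x h_i=A}(h_i^{[A]})^2$, where $A=\max_i\deg_\x h_i$ --- a polynomial which is nonzero, being a sum of squares with at least one nonvanishing summand --- it follows that $2A=2$; the same reasoning applied to the lowest $\x$-degree part shows that the minimal $\x$-degree occurring in any $h_i$ is also $1$, so each $h_i$ is homogeneous of degree $1$ in $\x$. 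Running the identical argument with $\x$ and $\y$ interchanged then shows each $h_i$ is homogeneous of degree $1$ in $\y$ as well, i.e.\ $h_i\in\RR[\x,\y]_{1,1}$, which gives \ref{pt-1-prehod}.

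For \ref{pt-2-prehod}, suppose first $f$ is a sum of squares; by \ref{pt-1-prehod} write $f=\sum_i g_i^2$ with $g_i\in\RR[\x,\y]_{1,1}$, take the unique linear forms $\ell_i\in\RR[\z]/I_{n,m}$ with $\sigma_{n,m}^\#(\ell_i)=g_i$, and use that $\sigma_{n,m}^\#$ is a ring homomorphism to get $\sigma_{n,m}^\#\bigl(\sum_i\ell_i^2\bigr)=\sum_i g_i^2=f$, hence ${\sigma_{n,m}^\#}^{-1}(f)=\sum_i\ell_i^2$ is a sum of squares in $\RR[\z]/I_{n,m}$. Conversely, if ${\sigma_{n,m}^\#}^{-1}(f)=\sum_i r_i^2$ with $r_i\in\RR[\z]/I_{n,m}$, applying $\sigma_{n,m}^\#$ yields $f=\sum_i\bigl(\sigma_{n,m}^\#(r_i)\bigr)^2$, a sum of squares in $\RR[\x,\y]$.

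The one delicate point I expect is \ref{pt-1-prehod}: because $\NN^2$ has no canonical total order there is no single ``leading bidegree'' to work with, so the two homogeneities must be peeled off one grading at a time, each step using the elementary fact that the extremal graded piece of a sum of squares is again a nonzero sum of squares and so cannot vanish while $f$ is bihomogeneous of bidegree $(2,2)$. Once \ref{pt-1-prehod} is in hand, \ref{pt-2-prehod} is a routine transport across the ring isomorphism $\sigma_{n,m}^\#$ and the degree-wise bijections already available.
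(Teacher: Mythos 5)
Your proof is correct and follows essentially the same route as the paper's: part \ref{pt-1-prehod} by comparing extremal graded pieces of a sum of squares with the bihomogeneity of $f$ (the paper decomposes each summand into its bihomogeneous components all at once rather than peeling one grading at a time, but the underlying observation — that an extremal graded piece of a sum of squares is itself a nonvanishing sum of squares — is identical), and part \ref{pt-2-prehod} by transporting square decompositions back and forth along the ring homomorphism $\sigma_{n,m}^\#$, using \ref{pt-1-prehod} to land in its image. No gaps.
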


\begin{proof}
	First we prove \ref{pt-1-prehod}. We have
		\begin{equation}\label{pt-1-proof}
			f=\sum_{i=1}^{i_0} \left(\sum_{j=0}^{j_i}\sum_{k=0}^{k_i} f_{ijk} \right)^2,
		\end{equation}
	where $i_0\in \NN$, $j_i, k_i\in \NN\cup\{0\}$ and $f_{ijk}\in \RR[\x,\y]_{j,k}$ are biforms of bidegree $(j,k)$.
	Let $f_{jk}$ be the bihomogenous part of $f$ of bidegree $(j,k)$.
	Then
		$$f=f_{22}=\sum_{i=1}^{i_0} \sum_{j=0}^2 \sum_{k=0}^2 f_{ijk}f_{i(2-j)(2-k)}.$$
	Since $f_{j0}=f_{0k}=0$ for every $j,k\in  \NN\cup\{0\}$, it follows from (\ref{pt-1-proof}) that 
	$f_{ij0}=f_{j0k}=0$ for each $i,j,k$. 
	Hence
		\begin{equation}\label{exp-pt-1}
			f=\sum_{i=1}^{i_0}  f_{i11}^2,
		\end{equation}
	which proves \ref{pt-1-prehod}.
		
	To prove the implication $(\Rightarrow)$ of \ref{pt-2-prehod} note that all $f_{i11}$ from $(\ref{exp-pt-1})$
	are in the image of $\sigma_{n,m}^\#$. Hence 
		${\sigma_{n,m}^\#}^{-1}(f)=\sum_{i=1}^{i_0}{\sigma_{n,m}^\#}^{-1}(f_{i11})^2$ 
	is a sum of squares. 
	It remains to prove the implication 
	$(\Leftarrow)$ of \ref{pt-2-prehod}. Since $f$ is in the image of $\sigma_{n,m}^\#$ it follows from
		$${\sigma_{n,m}^\#}^{-1}(f)=\sum_{i=1}^{i_1} [h_i]^2,$$
	where $i_1\in \NN$ and $[h_i]$ is the equivalence class of $h_i\in \RR[\z]$ in $\RR[\z]/I_{n,m}$, that
		$$f=\sum_{i=1}^{i_1} \sigma_{n,m}^\#([h_i])^2$$
	which proves $(\Leftarrow)$ of \ref{pt-2-prehod}.
\end{proof}

 We write
	\begin{eqnarray*}
		\Pos(V_\RR(I_{n,m})) &=& \left\{f\in \RR[\z]/I_{n,m}\colon f(z)\geq 0\quad \text{for all } z \in V_\RR(I_{n,m})
			\right\},\\
		\Sq(V_\RR(I_{n,m})) &=& \{f\in \RR[\z]/I_{n,m}\colon f=\sum_{i} f_i^2\quad \text{for some }
			f_i \in \RR[\z]/I_{n,m} \},
	\end{eqnarray*}
for the cone of nonnegative polynomials and the cone of sums of squares from $\RR[\z]/I_{n,m}$, respectively.

 For $n>2, m>2$, \cite[Procedure 3.3]{BSV} is an explicit construction of nonnegative quadratic forms from $\RR[\z]/{I_{n,m}}$ that are not sums of squares forms from random input data.
We now present this procedure specialized to our context
of biquadratic biforms.

\subsection{Algorithm}

\begin{algorithm}\label{algo}\rm
	Let $n>2$, $m>2$,
		$$d=n+m-2=\dim\sigma_{n,m}(\PP^{n-1}\times \PP^{m-1}) \;\text{ and }\; e=(n-1)(m-1)=\codim \sigma_{n,m}(\PP^{n-1}\times \PP^{m-1}).$$
	 To obtain a quadratic form in $ \Pos(V_{\RR}(I_{n,m}))\setminus \Sq(V_{\RR}(I_{n,m}))$ proceed as follows:\\

	\begin{enumerate}[label={\rm Step \arabic*}]
	\item\label{it:1} Construction of linear forms $h_0,\ldots, h_d$.
	\begin{enumerate}[label={\rm Step 1.\arabic*}]
		\item\label{it:1.1} 
			Choose $e+1$ random points $x^{(i)}\in \RR^n$ and $y^{(i)} \in \RR^m$ and calculate their Kronecker tensor products
				$z^{(i)}=x^{(i)}\otimes y^{(i)}\in \RR^{nm}$.
		\item\label{it:1.2} 
			Choose $d$ random vectors $v_1,\ldots v_d \in \RR^{nm}$ from the kernel of the matrix 
			$$\begin{pmatrix} z^{(1)} & \ldots & z^{(e+1)} \end{pmatrix}^\ast.$$
			The corresponding linear forms $h_1,\ldots, h_d$ are
				$$h_j(\z)=v_j^\ast\cdot \z  \in \RR[\z] \quad \text{for } j=1,\ldots,d.$$
			If the number of points in the intersection
				\begin{equation*}
					\ker(
					\begin{pmatrix}
					v_1 & \ldots & v_d
					\end{pmatrix}^\ast)
					\bigcap V(I_{n,m})
				\end{equation*}
			is not equal to $\deg (V(I_{n,m}))=\binom{n+m-2}{n-1}$ or if the points in the intersection are not in linearly general position, then repeat \ref{it:1.1}.
		\item\label{it:1.3}
			Choose a random vector $v_0$ from the kernel of the matrix 
			$$\begin{pmatrix} z^{(1)} & \ldots & z^{(e)} \end{pmatrix}^\ast.$$
			(Note that we have omitted  $z^{(e+1)}$.)
			The corresponding linear form $h_0$ is
				$$h_0(\z)=v_0^\ast\cdot \z \in \RR[\z].$$
			If $h_0$ intersects  $h_1$, $\ldots$, $h_d$ in more than
			$e$ points on $V(I_{n,m})$, then repeat \ref{it:1.3}.
\end{enumerate}

\smallskip
		\noindent Let $\mathfrak a$ be the ideal in $\RR[\z]/{I_{n,m}}$  generated by $h_0,h_1,\ldots,h_{d}$.

		\smallskip
	\item\label{it:2} Construction of a quadratic form $f\in  \left(\RR[\z]/{I_{n,m}}\right)\setminus \mathfrak a^2$.
	\begin{enumerate}[label={\rm Step 2.\arabic*}]
		
		\item\label{it:2.1}
		Let $g_1(\z),\ldots ,g_{\binom{n}{2}\binom{m}{2}}(\z)$ be the generators of the ideal $I_{n,m}$, i.e., the $2\times 2$ minors $z_{ij}z_{kl}-z_{il}z_{kj}$ for $1\le i<k\le n,1\le j<l\le m$. For each $i=1,\ldots ,e$ compute a basis $\{w_1^{(i)},\ldots ,w_{d+1}^{(i)}\}\subseteq \RR^{nm}$ of the kernel of the matrix
		$$\begin{pmatrix}
		\nabla g_1(z^{(i)})^* \\ \vdots \\ \nabla g_{\binom{n}{2}\binom{m}{2}}(z^{(i)})^*
		\end{pmatrix}.$$
		(Note that this kernel is always $(d+1)$-dimensional, since the variety $V(I_{n,m})$ is $d$-dimensional (in $\PP^{nm-1}$) and smooth.)
		
		\item\label{it:2.2}
		
		Let $\e_i$ denote the $i$-th standard basis vector of the corresponding vector space, i.e., the vector with 1 on the $i$-th component and 0 elsewhere. Choose a random vector $v\in \RR^{n^2m^2}$ from the intersection of the 
		kernels of the matrices 
		$$\begin{pmatrix}
		z^{(i)} \otimes w_1^{(i)} & \cdots & z^{(i)}\otimes w_{d+1}^{(i)}
			\end{pmatrix}^*\quad \text{for }i=1,\ldots,e
		$$
		with the kernels of the matrices
		$$\begin{pmatrix}
		\e_i\otimes \e_j-\e_j\otimes \e_i
		\end{pmatrix}^*\quad \text{for } 1\le i<j\le nm.$$
		(The latter condition ensures $v$ is a symmetric tensor in $\RR^{nm}\otimes \RR^{nm}$. Note also that we have omitted the point $z^{(e+1)}$.)
		
		For $1\leq i,k\leq n$ and $1\leq j,l\leq m$	 denote
		$$E_{ijkl}=(\e_i\otimes \e_j)\otimes (\e_k\otimes \e_l)+(\e_k\otimes \e_l)\otimes (\e_i\otimes \e_j)\in \mathbb{R}^{n^2m^2}.$$
			If $v$ is in
				$$\Span\big(\left\{v_i\otimes v_j+v_j\otimes v_i\colon 0\leq i\leq j\leq d\right\}\bigcup
				\left\{E_{ijkl}-E_{ilkj};1\leq i<k\leq n,1\leq j<l\leq m\right\}\big),$$
			then repeat {\rm\ref{it:2.2}}.
			Otherwise the corresponding quadratic form $f$
					$$f(\z)=v^\ast\cdot (\z\otimes\z)\in \RR[\z]/I_{n,m},$$
			does not belong to $\mathfrak a^2.$
	\end{enumerate}

	\item\label{it:3} Construction of a quadratic form in $\mathbb{R}[\z]/I_{n,m}$
	 that is positive  but not a sum of squares.\\[1mm]
	 Calculate the greatest $\delta_0>0$ such that
				$\delta_0 f+\sum_{i=0}^dh_i^2$
			is nonnegative on $V_{\RR}(I_{n,m})$. Then for every $0<\delta<\delta_0$
			the quadratic form
				$$(\delta f+\sum_{i=0}^dh_i^2)(\z)$$
			is nonnegative on $V_{\RR}(I_{n,m})$ but is not a sum of squares.
	\end{enumerate}
\end{algorithm}

\subsection{Correctness of Algorithm \ref{algo}}
The main ingredient in the proof is the theory of minimal degree varieties as developed in \cite{BSV}.
Since the Segre variety $\sigma_{n,m}(\PP^{n-1}\times\PP^{m-1})$ is not of minimal degree for $n,m\geq 3$
\cite[Example 5.6]{BSV}, $\Sq(V_\RR(I_{n,m}))\subsetneq \Pos(V_\RR(I_{n,m}))$. Hence results of \cite[Section 3]{BSV} apply; their Procedure 3.3 adapted to our set-up is Algorithm \ref{algo}. While \ref{it:1} and \ref{it:3} follow immediately from the corresponding steps in \cite[Procedure 3.3]{BSV}, we note for \ref{it:2} that ``vanishing to the second order at $z^{(i)}$'' means $f(z^{(i)})=0$ and $\nabla f(z^{(i)})\in \Span\left\{\nabla g_j(z^{(i)})\colon 1\leq j\leq \binom{n}{2}\binom{m}{2}\right\}$. Moreover, the former step is redundant, as the relation
$$\nabla \left(f-\sum _{j=1}^{\binom{n}{2}\binom{m}{2}}\lambda_jg_j\right)(z^{(i)})=0$$
together with the well-known identity $2q(\z)=(\nabla q(\z))^*\z$ for any quadratic form $q$ immediately yields $f(z^{(i)})=0$, since $z^{(i)}\in V(I_{n,m})$. The quadratic form $\delta f+\sum _{i=0}^dh_i^2$ is never a sum of squares, since $f\not \in \mathfrak a^2$, while it is nonnegative on $V_{\RR}(I_{n,m})$ for sufficiently small $\delta >0$ by the positive definiteness of the Hessian of $\sum _{i=0}^dh_i^2$ at its real zeros $z^{(1)},\ldots ,z^{(e)}$, see the proof of the correctness of Procedure 3.3 in \cite{BSV}. We note that the verification in \ref{it:1.2} is computationally difficult, but since all steps in the algorithm are performed with random data, all the generic conditions from \cite[Procedure 3.3]{BSV} are satisfied with probability 1. Hence, Algorithm \ref{algo} works well with probability 1 without implementing verifications.

\subsection{Implementation and rationalization}\label{ssec:rat}

\ref{it:1} and  \ref{it:2} are easily implemented as
they only require linear algebra.
(The verification in \ref{it:1.2} can be performed
using Gr\"obner basis if $m,n$ are small, but is
``always'' satisfied with random input data.)
On the other hand, \ref{it:3} is computationally difficult; testing nonnegativity even of low degree polynomials is NP-hard, cf.~\cite{LNQY09}.
We thus employ a sum of squares relaxation technique
motivated by (the solution to) Hilbert's 17th problem \cite{BCR98}.
Consider the following polynomial optimization problem:
find the maximal $\delta_0$ such that
\begin{equation}\label{eq:poliSos}
\sigma_{n,m}^\#\Big(\delta_0 f+\sum_{i=0}^dh_i^2\Big) \Big( \sum_{j,k}\big( x_jy_k\big)^2\Big)^{\ell}
\qquad\text{is a sum of squares.}
\end{equation}

\def\de{\delta}
For a given $\ell\in\NN$ the condition
\eqref{eq:poliSos} can be converted
to a linear matrix inequality using  Gram matrices of
polynomials. Thus maximizing $\delta_0$ subject to this
constraint is a standard semidefinite programming problem (SDP) \cite{WSV00}. We start by solving \eqref{eq:poliSos} for $\ell=1$ using one of the standard solvers.
If the obtained maximum is  $\de_0=0$, then we increase $\ell$ and solve another SDP. 
We repeat this until we obtain a maximum $\de_0>0$.
In fact, in our numerical experiments this always
happened with $\ell=1$ already.

Any $\de_0>0$ for that \eqref{eq:poliSos}
 holds gives an example of a positive biquadratic
 biform that is not a sum of squares. Together
 with Proposition \ref{map-poly} this yields
 instances of positive but not completely positive maps.

\subsubsection{Rationalization}\label{sss:rat}
\ref{it:1} and \ref{it:2} can be performed over $\QQ$,
leading to rational forms $h_j,f$. But
in \ref{it:3} of the algorithm
we are using SDPs,
so the output $\delta_0$ will be floating point.
Pick a positive rational $\delta<\delta_0$. 
We now explain how tools from polynomial optimization (\cite{PP08,CKP15}) can be used to provide 
an exact, symbolic certificate of positivity
for the produced form $\delta f+\sum_{i=0}^dh_i^2$ by computing a
positive semidefinite
rational Gram matrix $G$ for
$\sigma_{n,m}^\#\Big(\delta_0f+\sum_{i=0}^dh_i^2\Big) \Big( \sum_{j,k}\big( x_jy_k\big)^2\Big)^{\ell}$.
That is,  letting $p=\sigma_{n,m}^\#\Big(\delta_0f+\sum_{i=0}^dh_i^2\Big)$,
\begin{equation}\label{eq:gramG}
p \Big( \sum_{j,k}\big( x_jy_k\big)^2\Big)^{\ell} = W^* G W
\end{equation}
where $W=W(x,y)$ is the bihomogeneous vector $(x^I y^J)_{|I|=|J|=\ell+1}$.
Since $p(x^{(i)},y^{(i)})=0$ for $i\leq e$, each positive
semidefinite $G$ satisfying \eqref{eq:gramG} will have
at least an $e$-dimensional nullspace. Let $P$
be a change of basis matrix containing 
the vectors $W(x^{(i)},y^{(i)})$, $i\leq e$,
as the first  $e$
 columns and a (rational) basis for the orthogonal complement as its remaining columns.
With respect to this decomposition, write
\[
P^*GP = \begin{bmatrix} \check G_{11} & \check G_{12}\\
\check G_{12}^* & \check G_{22}\end{bmatrix}.
\]
By construction, we want $\check G_{11}$ and $\check G_{12}$ to be equal to $0$. Solve these linear equations and use them in $\check G_{22}$ to produce $\check G$. Then run
a SDP to solve $\check G\succeq0$. Use the  trivial objective function, since under a strict feasibility assumption the interior point
methods (which all state-of-the-art SDP solvers use) yield solutions in the relative interior of the optimal face, leading to solutions of maximal rank \cite{LSZ98}. 
If the output of the SDP is a full rank floating point $\check G$,
simply use a 
rationalization that is fine enough to yield 
 a positive semidefinite matrix (cf.~\cite{PP08}).

\subsection{Example}
In this subsection we give an explicit example
of a positive map that is
not completely positive built off Algorithm \ref{algo}.
Let 
\begin{multline*}
p_\Phi(x,y)=
104 x_1^2 y_1^2+283 x_1^2 y_2^2+18 x_1^2 y_3^2-310 x_1^2 y_1
   y_2+18 x_1^2 y_1 y_3+4 x_1^2 y_2 y_3+310 x_1 x_2 y_1^2 \\ -18
   x_1 x_3 y_1^2-16 x_1 x_2 y_2^2+52 x_1 x_3 y_2^2+4 x_1 x_2
   y_3^2-26 x_1 x_3 y_3^2-610 x_1 x_2 y_1 y_2-44 x_1 x_3 y_1
   y_2  \\ +36 x_1 x_2 y_1 y_3-200 x_1 x_3 y_1 y_3-44 x_1 x_2 y_2
   y_3+322 x_1 x_3 y_2 y_3+285 x_2^2 y_1^2+16 x_3^2 y_1^2+4
   x_2 x_3 y_1^2\\ +63 x_2^2 y_2^2+9 x_3^2 y_2^2+20 x_2 x_3
   y_2^2+7 x_2^2 y_3^2+125 x_3^2 y_3^2-20 x_2 x_3 y_3^2+16
   x_2^2 y_1 y_2+4 x_3^2 y_1 y_2-60 x_2 x_3 y_1 y_2\\
   +52 x_2^2
   y_1 y_3+26 x_3^2 y_1 y_3-330 x_2 x_3 y_1 y_3-20 x_2^2 y_2
   y_3+20 x_3^2 y_2 y_3-100 x_2 x_3 y_2 y_3.   \end{multline*}
The corresponding linear map $\Phi:\Sym_3\to\Sym_3$
is as follows:
\[
\Phi(E_{11})=
\begin{bmatrix}
 104 & -155 & 9 \\
 -155 & 283 & 2 \\
 9 & 2 & 18 \\
 \end{bmatrix},
 \quad
 \Phi(E_{22})=
\begin{bmatrix}
  285 & 8 & 26 \\
 8 & 63 & -10 \\
 26 & -10 & 7 \\
 \end{bmatrix},
 \]
 \[
 \Phi(E_{33})=
\begin{bmatrix}
  16 & 2 & 13 \\
 2 & 9 & 10 \\
 13 & 10 & 125 \\
 \end{bmatrix},
\quad
\Phi(E_{12}+E_{21})=
\begin{bmatrix}
  310 & -305 & 18 \\
 -305 & -16 & -22 \\
 18 & -22 & 4 \\
 \end{bmatrix},
 \]
 \[
\Phi(E_{13}+E_{31})=
 \begin{bmatrix}
 -18 & -22 & -100 \\
 -22 & 52 & 161 \\
 -100 & 161 & -26 \\
 \end{bmatrix},
\quad
 \Phi(E_{23}+E_{32})=
\begin{bmatrix}
   4 & -30 & -165 \\
 -30 & 20 & -50 \\
 -165 & -50 & -20 \\
  \end{bmatrix}.
\]
We claim that $p_\Phi$ is nonnegative but not a sum of squares. Equivalently, $\Phi$ is positive but not cp.
We will establish this by explaining how this example was produced using Algorithm \ref{algo}.

Start with the points
\[
\begin{bmatrix} x^{(1)} & y^{(1)} \\
x^{(2)} & y^{(2)}\\
x^{(3)} & y^{(3)}\\
x^{(4)} & y^{(4)}\\
x^{(5)} & y^{(5)}
\end{bmatrix}=
\left[
\begin{array}{rrr|rrr}
1 & 1 & -1 & 1 & 1 & -1 \\
 1 & -1 & 1 & 1 & -1 & 1 \\
 -1 & 1 & 1 & -1 & 1 & 1 \\
 1 & 1 & 1 & 1 & 1 & 1 \\
 2 & -3 & 3 & -2 & 0 & 2 \\
\end{array}
\right],
\]
where each $x^{(i)},y^{(i)}\in\RR^3$. Find some random
linear forms $h_j$ from \ref{it:1}, e.g., using
\[
\begin{bmatrix} v_0^* \\
v_1^* \\
v_2^* \\
v_3^* \\
v_4^* \\
\end{bmatrix}=
\left[
\begin{array}{rrrrrrrrr}
-2 & 2 & -1 & -2 & 0 & 0 & 1 & 0 & 2 \\
 0 & 2 & 3 & -2 & 3 & 0 & -3 & 0 & -3 \\
 -3 & 7 & 0 & -7 & -3 & 1 & 0 & -1 & 6 \\
 9 & -14 & 0 & 14 & -3 & 2 & 0 & -2 & -6 \\
 0 & 6 & 0 & -6 & -6 & 0 & 0 & 0 & 6 \\
\end{array}
\right].
\]
Finally, a random 
quadratic form (in $\z$) $f$ satisfying the conditions described in \ref{it:2} is
\begin{multline*}
\sigma_{3,3}^\#(f)= 5 x_1^2 y_1^2-3 x_1^2 y_2^2+4 x_1^2 y_3^2-4 x_1^2 y_1 y_2+7
   x_1^2 y_1 y_3-2 x_1^2 y_2 y_3+4 x_1 x_2 y_1^2-7 x_1 x_3
   y_1^2+x_1 x_2 y_2^2\\ +5 x_1 x_3 y_2^2+2 x_1 x_2 y_3^2-2 x_1
   x_3 y_3^2+2 x_1 x_2 y_1 y_2-3 x_1 x_3 y_1 y_2+7 x_1 x_2 y_1
   y_3-14 x_1 x_3 y_1 y_3-10 x_1 x_2 y_2 y_3\\
   +x_1 x_3 y_2 y_3-2
   x_2^2 y_1^2+3 x_3^2 y_1^2-2 x_2 x_3 y_1^2+2 x_3^2 y_2^2+x_2
   x_3 y_2^2+x_2^2 y_3^2+2 x_3^2 y_3^2-4 x_2 x_3 y_3^2-x_2^2
   y_1 y_2\\ +2 x_3^2 y_1 y_2+5 x_2^2 y_1 y_3+2 x_3^2 y_1 y_3-5
   x_2 x_3 y_1 y_3-x_2^2 y_2 y_3+4 x_3^2 y_2 y_3.
   \end{multline*}
Next run the SDP maximizing $\delta_0$
subject to ``$\sigma_{3,3}^\#(\sum_{i=0}^4 h_i^2+\delta_0 f)\sum_{j,k} (x_jy_k)^2$
is a sum of squares''. The optimal objective value is
$\delta_0\approx3.41628$. Choosing $\delta=2$, let
\[
p=\sigma_{3,3}^\#\Big(\sum_{i=0}^4 h_i^2+2 f\Big).
\]
Then $p=p_\Phi$. As explained above, $p$ is not a sum of squares, whence $\Phi$ is not completely positive. 
Alternately, a SDP can be used to compute an explicit
example of a linear functional positive on sum of squares and negative on $p$.

Finally, we used the rationalization procedure
described in Subsection \ref{sss:rat} above
to prove $p$ is nonnegative (with $\ell=1$).
We provide a Mathematica notebook\footnote{see \url{https://www.math.auckland.ac.nz/~igorklep/} or the arXiv source of this manuscript} 
where the interested reader can verify the calculations.

\appendix

\section{Sums of products of even powers of linear forms}

In \cite{Blek1} Blekherman also estimated the volume of the section of the cone of sums of even powers of linear forms. The tools developed in this article can be used to extend his result to the cone of sums of products of even powers of linear forms in different sets of variables. The main result of this appendix, Theorem \ref{lin-forms} below, provides bounds for the volume of the section of this cone.

Let $\Lf^{(n,m)}_{(2k_1,2k_2)}$ stand for the cone generated by the products of the form $\ell(\x)^{2k_1}\ell^\prime(\y)^{2k_2}$ where $\ell(\x)$ and $\ell^\prime(\y)$
are linear forms in $\x:=(x_1,\ldots,x_n)$ and $\y:=(y_1,\ldots,y_m)$, respectively, i.e., 
	$$\Lf^{(n,m)}_{(2k_1,2k_2)}:=\left\{ f\in \RR[\x,\y]_{2k_1,2k_2}\colon f= \sum_{i} \ell_i^{2k_1}{\ell_i^\prime}^{2k_2}
		\quad \text{with }\ell_i\in \RR[\x]_1,\; \ell^\prime_i\in \RR[\y]_1\right\},$$
where $\RR[\x]_1$ and $\RR[\y]_1$ stand for the vector spaces of linear forms in $\x$ and $\y$, respectively.

Recall the definitions of the product measure $\sigma$ from Subsection \ref{sec-main-res} and the vector space $\cM:=\cM^{(n,m)}_{(2,2)}$ from (\ref{M-def}).
Equip $\cM$ with the $L^2(\sigma)$ inner product and let $B_\cM$ be the unit ball in $\cM$.
Write $D_{\cM}$ for the dimension of $\cM$ and let $\mu$ be the (unique w.r.t.\ unitary isomorphism) pushforward of the Lebesgue measure on $\RR^{D_{\cM}}$ to $\cM$
(see Lemma \ref{unique-pushforward}).
Let $\widetilde{\Lf}^{(n,m)}_{(2k_1,2k_2)}$ be the set
	\begin{align*}
		\widetilde{\Lf}^{(n,m)}_{(2k_1,2k_2)}&:=\left\{f\in \cM \colon f+(\sum_{i=1}^{n}x_i^2)^{k_1}(\sum_{j=1}^{m} y_j^2)^{k_2}\in \Lf^{(n,m)}_{(2k_1,2k_2)}\right\}.
	\end{align*}
The bounds for the volume of the set $\widetilde{\Lf}^{(n,m)}_{(2k_1,2k_2)}$ are as follows.

\begin{theorem}\label{lin-forms}
	For $n,m\in \NN$ we have:
		$$
			h_{2k_1,2k_2}\leq
			\left(\frac{\Vol \widetilde{\Lf}^{(n,m)}_{(2k_1,2k_2)} }{\Vol B_{\cM} }\right)^{\frac{1}{D_{\cM}}}
			\leq j_{2k_1,2k_2},$$
	where
	\begin{align*}
		h_{2k_1,2k_2}&=\frac{1}{2}\max\left(\frac{2k_1^2+n}{2k_1^2},\frac{2k_2^2+m}{2k_2^2}\right)^{\frac{1}{2}}\frac{k_1!k_2!}{(\frac{n}{2}+2k_1)^{k_1} (\frac{m}{2}+2k_2)^{k_2}},\\
		j_{2k_1,2k_2}&=\frac{1}{c_{2k_1,2k_2}}\left(\frac{k_1!k_2!}{(\frac{n}{2}+2k_1)^{k_1} (\frac{m}{2}+2k_2)^{k_2}}\right)^{\alpha_{2k_1,2k_2}},
	\end{align*}
	and
	\begin{align*}
		c_{2k_1,2k_2}&=\ds
			\left\{\begin{array}{lr}
				3^{3}\cdot 10^{-\frac{20}{9}} \max(n,m)^{-\frac{1}{2}},& \text{if } k_1=k_2=1,\\[1mm]
				\exp(-3) \left(2\lceil\max(n,m)\ln(2\max(k_1,k_2)+1)\rceil\right)^{-\frac{1}{2}} ,& \text{otherwise,}
			\end{array}\right.\\
		\alpha_{2k_1,2k_2}&=
			1-\left(\frac{2k_1-1}{n+2k_1-1}\right)^2-\left(\frac{2k_2-1}{m+2k_2-1}\right)^2+
			\left(\frac{2k_1}{n+2k_1-2} \frac{2k_2}{m+2k_2-2}\right)^2.
	\end{align*}
\end{theorem}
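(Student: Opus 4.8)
The plan is to exploit duality to reduce the statement about $\widetilde{\Lf}^{(n,m)}_{(2k_1,2k_2)}$ to the already-established bounds for $\widetilde{\Pos}^{(n,m)}_{(2k_1,2k_2)}$ together with the apolar–versus–$L^2$ comparison in Lemma \ref{cone-L}. The key observation is that the cone $\Lf^{(n,m)}_{(2k_1,2k_2)}$ is, up to the apolar inner product, dual to the cone of nonnegative biforms. More precisely, $\Lf^{(n,m)}_{(2k_1,2k_2)}$ is the closed convex cone generated by $\{v^{2k_1}\otimes u^{2k_2} : v\in S^{n-1},\, u\in S^{m-1}\}$, and by the standard apolarity identity (as in \cite[equality (1.9)]{Rez92} and the identities used in the proof of Lemma \ref{lemma-sq}) one has $\langle v^{2k_1}\otimes u^{2k_2}, f\rangle_d = (2k_1)!(2k_2)!\, f(v,u)$ for $f\in\RR[\x]_{2k_1}\otimes\RR[\y]_{2k_2}$. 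Hence the dual cone of $\Lf^{(n,m)}_{(2k_1,2k_2)}$ in the apolar inner product is exactly $\Pos^{(n,m)}_{(2k_1,2k_2)}$, i.e. $\bigl(\Lf^{(n,m)}_{(2k_1,2k_2)}\bigr)^\ast_d=\Pos^{(n,m)}_{(2k_1,2k_2)}$, and symmetrically $\bigl(\Pos^{(n,m)}_{(2k_1,2k_2)}\bigr)^\ast_d=\Lf^{(n,m)}_{(2k_1,2k_2)}$.

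With this in hand I would proceed as follows. First, verify that $\cL:=\Pos^{(n,m)}_{(2k_1,2k_2)}$ satisfies the hypotheses of Lemma \ref{cone-L}: it is full-dimensional, contains $(\sum x_i^2)^{k_1}\otimes(\sum y_j^2)^{k_2}$ in its interior, and $\int_T f\,\dd\sigma>0$ for all nonzero $f\in\cL$ (nonnegativity plus the fact that a nonnegative biform vanishing a.e.\ on $T$ is zero). Applying Lemma \ref{cone-L} to this $\cL$ gives
\[
\frac{k_1!}{(\frac n2+2k_1)^{k_1}}\frac{k_2!}{(\frac m2+2k_2)^{k_2}}\le\left(\frac{\Vol\widetilde{\cL^\ast_d}}{\Vol\widetilde{\cL^\ast}}\right)^{\frac1{D_\cM}}\le\left(\frac{k_1!}{(\frac n2+k_1)^{k_1}}\frac{k_2!}{(\frac m2+k_2)^{k_2}}\right)^{\alpha_{2k_1,2k_2}},
\]
where $\widetilde{\cL^\ast_d}=\widetilde{\Lf}^{(n,m)}_{(2k_1,2k_2)}$ by the duality identified above, and $\widetilde{\cL^\ast}$ is the section of the $L^2$-dual cone $\bigl(\Pos^{(n,m)}_{(2k_1,2k_2)}\bigr)^\ast$. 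Second, I need to relate $\Vol\widetilde{\cL^\ast}$ to $\Vol B_\cM$. Using the Blaschke–Santaló inequality and the Rogers–Shephard/polar-dual machinery exactly as in the proof of the upper bound of Theorem \ref{lower-bound} — there the polar dual $\widetilde{\Pos^\circ}$ relative to the $L^2$ inner product is precisely $\widetilde{\cL^\ast}$ after the usual translation — one gets two-sided bounds on $\bigl(\Vol\widetilde{\cL^\ast}/\Vol B_\cM\bigr)^{1/D_\cM}$ in terms of $c_{2k_1,2k_2}$ (from the lower bound of Theorem \ref{lower-bound}, via Hölder/Jensen as in Claim 1) and in terms of $\bigl(\max(\tfrac{2k_1^2+n}{2k_1^2},\tfrac{2k_2^2+m}{2k_2^2})\bigr)^{1/2}$ (from the upper-bound argument, i.e. the gradient inner product estimate). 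Third, chain the inequalities:
\[
\left(\frac{\Vol\widetilde{\Lf}^{(n,m)}_{(2k_1,2k_2)}}{\Vol B_\cM}\right)^{\frac1{D_\cM}}=\left(\frac{\Vol\widetilde{\cL^\ast_d}}{\Vol\widetilde{\cL^\ast}}\right)^{\frac1{D_\cM}}\cdot\left(\frac{\Vol\widetilde{\cL^\ast}}{\Vol B_\cM}\right)^{\frac1{D_\cM}},
\]
and substitute the two pieces to land on exactly $h_{2k_1,2k_2}$ and $j_{2k_1,2k_2}$ as stated (the factor $\tfrac12\max(\cdots)^{1/2}$ and $\tfrac1{c_{2k_1,2k_2}}$ come from the $\widetilde{\cL^\ast}$ estimate, the power-$\alpha_{2k_1,2k_2}$ and linear factorial terms from Lemma \ref{cone-L}).

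The main obstacle I anticipate is pinning down precisely which translated cone plays the role of "$\widetilde{\cL^\ast}$" and making the translations consistent across Lemma \ref{cone-L}, the Blaschke–Santaló step, and the polar-dual identities: all these are phrased with the subtraction of $(\sum x_i^2)^{k_1}(\sum y_j^2)^{k_2}$, and one must check that the origin remains the Santaló point (which follows from $\SO(n)\times\SO(m)$-invariance, as in the proof of Theorem \ref{lower-bound}) and that $T$ and $T^{-1}$ interchange the two duals correctly on the hyperplane $\cM$ (the content of \eqref{action-of-T} and the fact that $\frac1cT$ fixes $\cM$). Once the bookkeeping of these translations is set up carefully, the estimates themselves are immediate recyclings of Theorem \ref{lower-bound} and Lemma \ref{cone-L}, with no new analytic input required.
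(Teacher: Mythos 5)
Your proposal follows essentially the same route as the paper's proof: identify $\Lf^{(n,m)}_{(2k_1,2k_2)}$ and $\Pos^{(n,m)}_{(2k_1,2k_2)}$ as mutual duals in the apolar inner product via the identity $\langle f, u^{2k_1}\otimes v^{2k_2}\rangle_d=(2k_1)!(2k_2)!\,f(u,v)$, factor $\Vol\widetilde{\Lf}/\Vol B_\cM$ through $\Vol\widetilde{\Pos^\ast}$, control the first factor by Lemma \ref{cone-L} applied to $\cL=\Pos^{(n,m)}_{(2k_1,2k_2)}$, and control the second by the estimate (\ref{estimate-10}) (for the lower bound) and by Blaschke--Santal\'o combined with the lower bound of Theorem \ref{psd-intro} (for the upper bound). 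The only step you assert without justification is the closedness of $\Lf^{(n,m)}_{(2k_1,2k_2)}$, which is needed for the biduality $\Lf=\Pos^\ast_d$ and which the paper establishes separately in Lemma \ref{closed-cones} by a Carath\'eodory-type compactness argument.
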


The proof of Theorem \ref{lin-forms} closely follows the proof of \cite[Theorem 7.1]{Blek1} which gives volume bounds for the cone generated by $2k$-th powers of linear forms in $\x$. 
We will need the following lemma.

\begin{lemma}\label{closed-cones}
	The sets $\Lf^{(n,m)}_{(2k_1,2k_2)}$ and $\Pos^{(n,m)}_{(2k_1,2k_2)}$ are closed in the apolar inner product on $\RR[\x,\y]_{2k_1,2k_2}$.
\end{lemma}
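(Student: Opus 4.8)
The plan is to show each of the two cones is closed in the apolar topology, which is the same as the usual topology on the finite-dimensional vector space $\RR[\x,\y]_{2k_1,2k_2}$; so it suffices to prove sequential closedness. For $\Pos^{(n,m)}_{(2k_1,2k_2)}$ this is essentially immediate: if $f_i \to f$ with each $f_i$ nonnegative on $\RR^n \times \RR^m$, then for every fixed $(x,y)$ the evaluation functional $g \mapsto g(x,y)$ is continuous on the finite-dimensional space, so $f(x,y) = \lim_i f_i(x,y) \geq 0$; hence $f \in \Pos^{(n,m)}_{(2k_1,2k_2)}$. I would state this in one or two sentences.

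The substantive part is $\Lf^{(n,m)}_{(2k_1,2k_2)}$. The natural approach, following Blekherman's treatment of the single-variable cone of sums of $2k$-th powers, is to exhibit $\Lf^{(n,m)}_{(2k_1,2k_2)}$ as the conical hull of a compact set and invoke the standard fact that the conical hull of a compact set not containing the origin (equivalently, lying in an open half-space) is closed. Concretely, let
\[
K = \left\{ \ell(\x)^{2k_1}\,\ell'(\y)^{2k_2} \colon \ell \in \RR[\x]_1,\ \ell' \in \RR[\y]_1,\ \|\ell^{2k_1}\|_2 = \|{\ell'}^{2k_2}\|_2 = 1 \right\},
\]
the image of the compact set $S^{n-1} \times S^{m-1}$ (or rather the sphere in the coefficient spaces of $\ell^{2k_1}$ and ${\ell'}^{2k_2}$, using that $v \mapsto v^{2k_1}$ is continuous) under a continuous map, hence compact. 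Every generator $\ell^{2k_1}{\ell'}^{2k_2}$ of the cone is a nonnegative scalar multiple of an element of $K$, so $\Lf^{(n,m)}_{(2k_1,2k_2)} = \operatorname{cone}(K)$. Moreover $0 \notin K$ and in fact $\int_T g\, \dd\sigma > 0$ for every $g \in K$ (since $g \geq 0$ and $g \not\equiv 0$), so $K$ lies in the open half-space $\{ g : \langle r_\x^{k_1}\otimes s_\y^{k_2},\, g\rangle > 0\}$; the continuous linear functional $\langle r_\x^{k_1}\otimes s_\y^{k_2},\, \cdot\, \rangle$ is bounded below by a positive constant on the compact $K$. By Carathéodory's theorem every element of $\operatorname{cone}(K)$ is a sum of at most $D_\cM + 1$ elements of $\operatorname{cone}(K)$, and a standard compactness argument (if $f_i \to f$ with $f_i = \sum_{j} \lambda_{ij} g_{ij}$, $g_{ij} \in K$, $\lambda_{ij} \geq 0$, then the $\lambda_{ij}$ are bounded because the above functional is $\geq c \sum_j \lambda_{ij}$ on $f_i$, so one passes to a subsequence where both $\lambda_{ij} \to \lambda_j$ and $g_{ij} \to g_j \in K$ by compactness of $K$, giving $f = \sum_j \lambda_j g_j \in \operatorname{cone}(K)$) shows $\operatorname{cone}(K)$ is closed.

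I expect the main obstacle to be purely bookkeeping rather than conceptual: one must verify carefully that the parametrizing set $K$ really is compact — this uses continuity of $\ell \mapsto \ell^{2k_1}$ together with the normalization, and the fact that a nonzero power $\ell^{2k_1}$ has strictly positive $L^2$-norm on $S^{n-1}$ so the normalization is well-defined — and that the cone is genuinely $\operatorname{cone}(K)$ and not just contained in it. Once the compactness of $K$ and positivity of the functional on $K$ are in hand, closedness of $\operatorname{cone}(K)$ is the textbook lemma. No estimate is needed; this is a soft topological statement, so the proof should be short.
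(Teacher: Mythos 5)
Your proof is correct, and for the substantive half (the cone $\Lf^{(n,m)}_{(2k_1,2k_2)}$) it takes a genuinely different route from the paper. The treatment of $\Pos^{(n,m)}_{(2k_1,2k_2)}$ is the same in substance: the paper recovers pointwise evaluations via the apolar pairing $\langle \cdot\,, u^{2k_1}\otimes v^{2k_2}\rangle_d$, you via continuity of evaluation functionals on a finite-dimensional space; these are interchangeable. For $\Lf$, the paper also begins with Carath\'eodory to bound the number of summands by $r=\dim\RR[\x,\y]_{2k_1,2k_2}$, but then argues directly on coefficients: it rescales each $\y$-linear form so that its largest coefficient has absolute value $1$, absorbs the scale into the $\x$-form, extracts convergent subsequences of the $\tilde b$'s, pairs against $\e_j^{2k_1}\otimes\f_k^{2k_2}$ to deduce boundedness of the terms $\tilde a_{ipj}^{2k_1}\tilde b_{ipk}^{2k_2}$, and uses $|\tilde b_{ipk^{(p)}}|=1$ to bound the $\tilde a$'s and pass to limits. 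You instead realize $\Lf$ as $\operatorname{cone}(K)$ for a compact base $K$ on which the functional $g\mapsto\int_T g\,\dd\sigma$ is bounded below by a positive constant, and invoke (and correctly sketch the proof of) the standard fact that such a conical hull is closed. Your version is cleaner and makes the role of the strictly positive functional explicit --- it is essentially the same functional that controls the paper's normalization, but packaged conceptually. The only points requiring care, which you flag, are the compactness of $K$ (via $\|\ell^{2k_1}\|_2\geq c_0\|\ell\|^{2k_1}$ on the coefficient sphere) and the fact that every generator is a nonnegative multiple of an element of $K$; both check out, and the Carath\'eodory bound $D_\cM+1=\dim\RR[\x,\y]_{2k_1,2k_2}$ is the right one for cones.
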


Let $S^{n-1}$ be the unit sphere in $\RR^n$. For a point $v:=(v_1,\ldots,v_n)\in S^{n-1}$, we denote by $v^{2k}$ the form
		$v^{2k}:=(v_1x_1+\ldots+v_nx_n)^{2k}.$

\begin{proof}[Proof of Lemma \ref{closed-cones}]
		First we will prove that the set  $\Pos:=\Pos^{(n,m)}_{(2k_1,2k_2)}$ is closed. 
	Let $\{p_i\}_{i\in \NN}$ be a sequence from $\Pos:=\Pos^{(n,m)}_{(2k_1,2k_2)}$ converging to some element $p\in \RR[\x,\y]_{2k_1,2k_2}$.
	We have to prove that $p\in \Pos$. For every $u\in S^{n-1}$, $v\in S^{m-1}$ we have that
		$$\frac{1}{(2k_1)!(2k_2)! }\langle p-p_i,u^{2k_1}\otimes v^{2k_2}\rangle_d=(p-p_i)(u,v).$$
	Therefore $\displaystyle p(u,v)=\lim_{i\to\infty}p_i(u,v)$ 
	and hence $p(u,v)\geq 0$ for every $u\in S^{n-1}$, $v\in S^{m-1}$. This proves that $p\in \Pos$ and $\Pos$ is closed.
	
		It remains to prove that the set $\Lf:=\Lf^{(n,m)}_{(2k_1,2k_2)}$ is closed.
		Let $\{\ell_i\}_{i\in \NN}$ be a sequence from $\Lf$ converging to some element $\ell\in \RR[\x,\y]_{2k_1,2k_2}$. 
		We have to prove that $\ell\in \Lf$.  By Caratheodory's theorem \cite[Proposition 2.3]{Rez92} we may assume that each $\ell_i$ is of the form 
			$$\ell_i=\displaystyle\sum_{p=1}^r\left((\sum_{j=1}^n a_{ipj} x_j)^{2k_1}(\sum_{k=1}^m b_{ipk} y_k)^{2k_2}\right),$$
		where $r:=\dim \RR[\x,\y]_{2k_1,2k_2}$, $a_{ipj}\in \RR$, $b_{ipk}\in \RR$ for all $i,p,j,k$ and $\sum_{j=1}^n |a_{ipj}|^2\neq 0$,
		$\sum_{k=1}^m |b_{ipk}|^2\neq 0$ for all $i,p$.
		For all $i,p$ we define
			$$M_{ip}:=\max(|b_{ip1}|,\ldots,|b_{ipm}|).$$
		Note that $M_{ip}>0$.
		For each $p$ there exists $k^{(p)}\in\{1,\ldots,m\}$ such that
		$|b_{ipk^{(p)}}|=M_{ip}$ for infinitely many $i\in \NN$. Passing to subsequences we may assume that
		$|b_{ipk^{(p)}}|=M_{ip}$ for all $p$ and $i\in\NN$. 
		We have
			 $$\ell_i=\sum_{p=1}^r\left((\sum_{j=1}^n M_{ip}^{\frac{2k_2}{2k_1}}a_{ipj} x_j)^{2k_1}(\sum_{k=1}^m \frac{b_{ipk}}{M_{ip}} y_k)^{2k_2}\right)=:
			 	\sum_{p=1}^r\left((\sum_{j=1}^n \tilde{a}_{ipj} x_j)^{2k_1}(\sum_{k=1}^m \tilde{b}_{ipk} y_k)^{2k_2}\right).$$
		Note that for all $p,k$ the sequences  $\{|\tilde{b}_{ipk}|\}_{i\in \NN}$ are bounded by 1 and hence the sequences $\{\tilde{b}_{ipk}\}_{i\in \NN}$ 
		have convergent subsequences. Passing to subsequences we may assume that
		all the sequences $\{\tilde{b}_{ipk}\}_{i\in \NN}$ are convergent; we write $b_{pk}$ for their limits. 
		Let $\e_i$ (resp.\ $\f_j$) denote the $i$-th (resp.\ $j$-th) standard basis vector of $\RR^n$ (resp.\ $\RR^m$), i.e., the vector with 1 on the $i$-th (resp.\ $j$-th) component and 0 elsewhere.
		Note that 
			$$\frac{1}{(2k_1)!(2k_2)!} \langle \ell-\ell_i,\e_j^{2k_1}\otimes \f_{k}^{2k_2}\rangle_d=
				(\ell-\ell_i)(\e_j,\f_{k})=\ell(\e_j,\f_{k})-\sum_{p=1}^r \tilde a_{ipj}^{2k_1}\tilde b_{ipk}^{2k_2}.$$
		Since $\ell_i$ converges to $\ell$ in the apolar inner product, it follows that 
		$$\displaystyle \ell(\e_j,\f_{k})=\lim_{i\to\infty}\sum_{p=1}^r \tilde a_{ipj}^{2k_1}\tilde b_{ipk}^{2k_2}.$$
		Therefore for all $p,j,k$ the sequences $\{   \tilde a_{ipj}^{2k_1}\tilde b_{ipk}^{2k_2}  \}_{i\in \NN}$ are bounded above 
		and hence have convergent subsequences. Passing to subsequences we may assume that all the sequences $\{   \tilde a_{ipj}^{2k_1}\tilde b_{ipk}^{2k_2}  \}_{i\in \NN}$ 
		are convergent.
		Now since $|\tilde b_{ipk^{(p)}}|=1$ for all $i,p$, it follows that for each $p,j$ the sequence
		$\{   \tilde a_{ipj}^{2k_1} \}_{i\in \NN}=\{   \tilde a_{ipj}^{2k_1}\tilde b_{ipk^{(p)}}^{2k_2}  \}_{i\in \NN}$ is convergent, and hence the bounded sequence $\{ \tilde a_{ipj}\}_{i\in \NN}$ can have at most two accumulation points. Passing to subsequences we may assume that all the squences $\{\tilde a_{ipj}\}_{i\in \NN}$ are convergent; we denote the limits by 
		$a_{pj}$.
		Then $$\ell=\displaystyle\sum_{p=1}^r\left((\sum_{j=1}^n a_{pj} x_j)^{2k_1}(\sum_{k=1}^m b_{pk} y_k)^{2k_2}\right)\in \Lf,$$ which concludes the proof of the lemma.
\end{proof}

\begin{proof}[Proof of Theorem \ref{lin-forms}]
	We write $\Lf:=\Lf^{(n,m)}_{(2k_1,2k_2)}$ and $\Pos:=\Pos^{(n,m)}_{(2k_1,2k_2)}$.
	By Lemma \ref{closed-cones}, $\Lf$ and $\Pos$ are closed in the apolar inner product on $\RR[\x,\y]_{2k_1,2k_2}$.
	Since
		$$\left\langle f, u^{2k_1}\otimes v^{2k_2}\right\rangle_d = (2k_1)!(2k_2)! f(u,v)\quad \text{for all } f\in \RR[\x,\y]_{2k_1,2k_2},\; u\in \Sym^{n-1},\; v\in \Sym^{m-1},$$
	we have that
		$$ \Lf_d^\ast=\Pos\quad\text{and}\quad \Lf=\Pos_d^\ast, $$
	where $\Lf_d^\ast$ (resp.\ $\Pos_d^\ast$) is the dual to the cone $\Lf$ (resp.\ $\Pos$) in the apolar inner product.
	In particular, 
		\begin{equation}\label{duality-1}
			\widetilde{\Lf}=\widetilde{\Pos}_d^\ast.
		\end{equation}
	Let $\Pos^\ast\subseteq \RR[\x,\y]_{2k_1,2k_2}$ and
	$\widetilde{\Pos}^\circ\subseteq \cM$ be the dual cone of $\Pos$ and the polar dual of ${\widetilde \Pos}$ in the $L^2(\sigma)$ inner product, respectively.	
	By an analogous reasoning as for the equality $(\Sq ')^\circ=-\widetilde{\Sq}^\ast$ in the proof of the lower bound in Theorem \ref{squares}, we conclude that
		\begin{equation}\label{duality-2}
			\widetilde{\Pos}^\circ=-\widetilde{\Pos}^\ast.
		\end{equation}
	By (\ref{duality-1}) and (\ref{duality-2}) we have that
		$$\left(\frac{\Vol \widetilde{\Lf} }{\Vol B_{\cM} }\right)^{\frac{1}{D_{\cM}}}=
			\left(\frac{\Vol \widetilde{\Pos}_d^\ast }{\Vol B_{\cM} }\right)^{\frac{1}{D_{\cM}}}=
			\left(\frac{\Vol \widetilde{\Pos}_d^\ast }{\Vol  \widetilde{\Pos}^\ast }\right)^{\frac{1}{D_{\cM}}}
			\left(\frac{\Vol \widetilde{\Pos}^\circ }{\Vol B_{\cM} }\right)^{\frac{1}{D_{\cM}}}.
		$$
	Since $\Pos$ has $(\sum_{i=1}^n x_i^2)^{k_1}\otimes (\sum_{j=1}^m y_j^2)^{k_2}$
	as an interior point and $\int_T f \dd\sigma>0$ for all non-zero $f\in \Pos$, we can estimate 
	$\left(\frac{\Vol \widetilde{\Pos}_d^\ast }{\Vol  \widetilde{\Pos}^\ast }\right)^{\frac{1}{D_{\cM}}}$ by Lemma \ref{cone-L} and obtain
		\begin{equation}\label{1est}
			\frac{k_1!k_2!}{(\frac{n}{2}+2k_1)^{k_1}(\frac{m}{2}+2k_2)^{k_2}} 
			\leq  \left(\frac{\Vol \widetilde{\Pos}_d^\ast }{\Vol  \widetilde{\Pos}^\ast }\right)^{\frac{1}{D_{\cM}}}
			\leq  \left(\frac{k_1!k_2!}{(\frac{n}{2}+2k_1)^{k_1}(\frac{m}{2}+2k_2)^{k_2}}\right)^{\alpha_{2k_1,2k_2}},
		\end{equation}
	where 
	$\alpha_{2k_1,2k_2}$ is defined as in the statement of the theorem.
	Using the lower bound in the estimate (\ref{1est}) together with the estimate (\ref{estimate-10}) proves
	the lower bound in Theorem \ref{lin-forms}.
	By the estimate (\ref{Blaschke}) and the equality (\ref{duality-2}) we have that
		$$\left( \frac{\Vol \widetilde{\Pos^\ast}}{\Vol B_{\cM}}\right)^{\frac{1}{D_{\cM}}} \leq \left(\frac{B_{\cM}}{\Vol \widetilde{\Pos}}\right)^{\frac{1}{D_{\cM}}}.$$
	Using the lower bound in Theorem \ref{psd-intro} and the upper bound in the estimate (\ref{1est})
	proves the upper bound in Theorem \ref{lin-forms}.
\end{proof}

\end{document}